\newtheorem{thm}{\sc Theorem}[section]
\newtheorem{prop}[thm]{\sc Proposition}
\newtheorem{lemma}[thm]{\sc Lemma}
\newtheorem{cor}[thm]{\sc Corollary}
\theoremstyle{definition}
\newtheorem{defn}[thm]{\sc Definition}
\newtheorem{remark}[thm]{\sc Remark}
\newcommand{\N}{\mathbb{N}}
\newcommand{\Z}{\mathbb{Z}}
\newcommand{\Q}{\mathbb{Q}}
\newcommand{\C}{\mathbb{C}}
\newcommand{\uhp}{\mathbb{H}}
\newcommand{\oK}{\mathscr{O}_K}
\newcommand{\ok}{\mathscr{O}_k}
\newcommand{\oL}{\mathscr{O}_L}
\newcommand{\id}{\mathbb{I}_2}
\newcommand{\br}{\hspace{-0.6667em}}
\newcommand{\isom}{\overset{\raisebox{0.2em}{\scriptsize$\cong$\hspace{0.15em}}}{\,\smash\longrightarrow\,}}
\newcommand{\intro}[1]{\begin{center}\begin{minipage}{\dimexpr\textwidth-4cm}\textbf{\scalebox{0.825}[0.975]{\footnotesize{A}\scriptsize{BSTRACT}.}}  \small{#1}\end{minipage}\end{center}}
\newcommand{\auth}{\author{\normalsize{A}\footnotesize{RAV}\normalsize{ V. K}\footnotesize{ARIGHATTAM} \vspace{0.3cm}}}
\DeclareMathOperator{\ord}{ord}
\DeclareMathOperator{\Frob}{\mathsf{Frob}}
\DeclareMathOperator{\GL}{GL}
\DeclareMathOperator{\SL}{SL}
\DeclareMathOperator{\Cl}{Cl}
\DeclareMathOperator{\Mat}{\mathsf{Mat}}
\DeclareMathOperator{\Gal}{Gal}
\DeclareMathOperator{\Tr}{Tr}
\DeclareMathOperator{\Res}{\mathsf{Res}}
\DeclareMathOperator{\rk}{rk}
\DeclareMathOperator{\lcm}{lcm}
\let\ker\relax\DeclareMathOperator{\ker}{\mathsf{ker}}
\DeclarePairedDelimiter{\abs}{\lvert}{\rvert}
\def\Im{\operatorname{Im}}
\def\Re{\operatorname{Re}}
    \def\@seccntformat#1{\@ifundefined{#1@cntformat}
        {\csname the#1\endcsname\quad}
        {\csname #1@cntformat\endcsname}}
    \newcommand\section@cntformat{\sc\large\thesection.\;\;\;}
    \newcommand\subsection@cntformat{\sc\S\:\!\thesubsection.\;\;\;}
    \let\s@ction\section
    \let\subs@ction\subsection
    \def\section{\@ifstar{\@sectionstar}{\@section}}
    \newcommand{\@section}[1]{\s@ction{\centering\large\sc#1}\vspace{0.3em}}
    \newcommand{\@sectionstar}[1]{\s@ction*{\centering\large\sc#1}}
    \renewcommand{\subsection}[1]{\subs@ction{\normalfont #1.}\medskip}
\numberwithin{equation}{section}
\title{Heegner point constructions and fundamental units in cubic fields}
\date{ }
\begin{document}
\maketitle
\intro{We use Heegner points to prove the existence of nontorsion rational points on the elliptic curve $y^2 = x^3 + D$ for any rational number $D=a/b$ such that $a$ and $b$ are squarefree integers for which $6$, $a$, and $b$ are pairwise relatively prime, $a\equiv b\!\pmod{4}$, $\abs*{a}\abs*{b}^{-1}\equiv5$ or $7\!\pmod{9}$, and $h_K$ is odd, where $K\coloneqq\Q(\!\sqrt[3]{\hspace{-0.05em}D})$.  In particular, we show that under these assumptions, the elliptic curve with equation $y^2 = x^3 + D$ has algebraic rank $1$ and the elliptic curve with equation $y^2 = x^3 - D$ has algebraic rank $0$.  This follows from our new expression for the fundamental unit of $\oK$ in terms of the class number $h_K$ and the norm of a special value of a modular function of level 6, for any integer $D$ relatively prime to $6$, not congruent to $\pm1\!\pmod{9}$, for which no exponent in its prime factorization is a multiple of $3$.  This expression is an analogue of a theorem of Dirichlet in 1840 relating the fundamental unit of a real quadratic field to its class number and a product of cyclotomic units.}
\vspace{0.6em}

\section{Introduction.}
Consider the fundamental question of determining when a rational number $D$ can be expressed as the sum of a rational square and a rational cube, or more specifically, when the elliptic curve $E_D$ with (affine) equation $y^2 = x^3 + D$ has algebraic rank $1$ over $\Q$.  As an example, if $D = 7$, there are no nontrivial rational points on $E_D$ (see \cite{Silv1}, Remark XI.7.1.1), while if $D = 5$, the group $E_D(\Q)$ includes the point $(x,y)=(1,2)$ among infinitely many others.  There has been much progress concerning the related question for the elliptic curve $\widetilde{E}_k$ with equation $X^3 + Y^3 = k$ (note that the curves $\widetilde{E}_{2k}$ and $E_{-27k^2}$ are isomorphic over $\Q$ for all $k$).  For instance, $\widetilde{E}_k$ has nontrivial points when $k/2$ is either a prime number congruent to $2\!\pmod{9}$ or the square of a prime number congruent to $5\!\pmod{9}$ (Satg\'e \cite{Sa}).  Recent work has shown that $\widetilde{E}_k$ has algebraic rank $1$ in several other cases when $k$ has either $1$ or $2$ prime factors, such as when $k$ is a prime number congruent to $4$ or $7\!\pmod{9}$ (which is the subject of Sylvester's problem, see Elkies \cite{E} and Dasgupta and Voight \cite{DV, DV2}), and when $k$ is congruent to $3$ times a prime or the square of a prime congruent to $2$ or $5\!\pmod{9}$ (Shu, Song, and Yin \cite{SSY}, Shu and Yin \cite{SY2}).  For the congruent number problem, which concerns elliptic curves $E/\Q$ with $j$-invariant $1728$, Tian \cite{Tian} has found more general conditions under which these elliptic curves have algebraic and analytic rank $1$.

In this paper, we will use Heegner points to construct nontrivial rational points on the elliptic curves $E_D$ for a large class of rational numbers $D$ that can be expressed as the quotient of relatively prime squarefree integers.  Note that it is a simple exercise using the injectivity of reduction maps (see Silverman \cite{Silv1}, Exercise 10.19) to see that $E_D(\Q)_{tors}$ is trivial unless $D$ is a square, a cube, or $-432$ times a sixth power (none of which hold for the values of $D$ we will be considering).  Our main result is the following.
\begin{thm}\label{mainresult} Suppose that $a$ and $b$ are squarefree integers such that $6$, $a$, and $b$ are pairwise relatively prime and $\abs*{a}\abs*{b}^{-1}\equiv5$ or $7\!\pmod{9}$.  Let $D\coloneqq a/b$ and let $\epsilon\coloneqq(-1)^{(a-b)/2}$.  If the class number $h_K$ of the cubic field $K\coloneqq\Q(\!\sqrt[3]{\hspace{-0.05em}D})$ is odd, the elliptic curve $E_{\epsilon D}$ with equation $y^2 = x^3 + \epsilon D$ has algebraic rank $1$ over $\Q$, and the elliptic curve $E_{-\epsilon D}$ with equation $y^2 = x^3 - \epsilon D$ has algebraic rank $0$ over $\Q$.
\end{thm}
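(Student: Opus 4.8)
The plan is to exploit the complex multiplication of both curves by the Eisenstein order $\Z[\omega]$, $\omega=e^{2\pi i/3}$, together with the observation that $E_{-\epsilon D}$ is the quadratic twist of $E_{\epsilon D}$ by $-1$: the map $(x,y)\mapsto(-x,\sqrt{-1}\,y)$ is an isomorphism $E_{\epsilon D}\isom E_{-\epsilon D}$ over $\Q(\sqrt{-1})$, so the two curves have opposite global root numbers. The sign $\epsilon=(-1)^{(a-b)/2}$ is engineered so that, after evaluating the local root numbers at the additive primes $2$ and $3$ (additive with potentially good reduction, since $\gcd(D,6)=1$) and at the archimedean place, $E_{\epsilon D}$ acquires global root number $-1$ while $E_{-\epsilon D}$ acquires $+1$. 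The parity conjecture being a theorem for these CM curves, the root numbers already force $\rk E_{\epsilon D}(\Q)$ to be odd and $\rk E_{-\epsilon D}(\Q)$ to be even; the content of the theorem is to sharpen ``odd'' to $1$ and ``even'' to $0$.

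First I would dispatch $E_{-\epsilon D}$. Having CM by the Eisenstein integers, its Hasse--Weil $L$-function equals a Hecke $L$-function $L(\psi,s)$ attached to an algebraic Hecke character $\psi$ of $\Q(\sqrt{-3})$, and by Damerell's theorem $L(\psi,1)$ is an explicit nonzero period times an algebraic number $\alpha$. The crux of this step is to identify $\alpha$, via the Kronecker limit formula, with the norm of the value at the CM point $\tau_D$ of the level-$6$ modular function figuring in the fundamental-unit expression, and hence with a quantity whose only possible source of vanishing is the class number $h_K$. Since $h_K\geq1$, this gives $L(E_{-\epsilon D},1)\neq0$, whereupon the theorem of Coates--Wiles (in Rubin's sharpened form proving the rank part of Birch--Swinnerton-Dyer for CM curves) yields $\rk E_{-\epsilon D}(\Q)=0$ and finiteness of the Tate--Shafarevich group.

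For $E_{\epsilon D}$, with root number $-1$, I would instead produce a point of infinite order. Starting from the CM point $\tau_D\in\uhp$ attached to the relevant order in $\Q(\sqrt{-3})$, I push it through the modular parametrization of the level-$6$ model of $E_{\epsilon D}$ and take the trace from the ring class field down to $\Q$; the root number being $-1$ is exactly what makes this trace land in $E_{\epsilon D}(\Q)$ rather than in its twist, yielding a point $P$. The decisive computation is that the coordinates of $P$ are built from the value at $\tau_D$ of the same level-$6$ function, so that $P$ corresponds, under the Coates--Wiles/CM dictionary between rational points and units in abelian extensions of $\Q(\sqrt{-3})$, to the fundamental unit $u_K$ of $\oK$ raised to a power governed by $h_K$. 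A fundamental unit has infinite order, so $P$ is nontorsion and $\rk E_{\epsilon D}(\Q)\geq1$; equivalently, by Gross--Zagier the canonical height of $P$ is a positive multiple of the regulator $\log\abs*{u_K}$, so $L'(E_{\epsilon D},1)\neq0$. Rubin's Euler system of elliptic units (or Kolyvagin's Euler system of Heegner points, in its CM incarnation) then bounds the relevant Selmer group and forces $\rk E_{\epsilon D}(\Q)=1$, the hypothesis that $h_K$ is odd being precisely what kills the $2$-primary contribution to the Selmer group that would otherwise obstruct this bound.

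The main obstacle, and the real content behind both rank assertions, is the explicit evaluation invoked above: that the norm of the level-$6$ modular value at $\tau_D$ is the fundamental unit of $\oK$ to the power $h_K$, up to a root of unity. This is the cubic-field analogue of Dirichlet's $1840$ formula expressing the fundamental unit of a real quadratic field through cyclotomic units, and proving it requires (i) realizing the Galois closure $K(\omega)$ as a cyclic cubic extension of $\Q(\sqrt{-3})$ inside a ray class field, so that the elliptic-unit formalism applies; (ii) a Kronecker-limit-formula computation showing the modular value is an elliptic unit whose logarithm computes the regulator of $K$; and (iii) an index calculation, controlled by the ramification of $3$ (forced by $\abs*{a}\abs*{b}^{-1}\equiv5,7\pmod9$, equivalently $D\not\equiv\pm1\pmod9$) and by the oddness of $h_K$, pinning down the exact power and excluding parasitic roots of unity. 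Once this identity is in hand, both rank statements follow from the CM machinery assembled above.
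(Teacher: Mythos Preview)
Your high-level architecture diverges substantially from the paper's, and the divergence hides a real gap.

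The paper never touches root numbers, $L$-values, Coates--Wiles, Gross--Zagier, or Kolyvagin/Rubin. Its rank upper bound is purely algebraic: Cassels' $2$-descent gives $\rk E_{D}(\Q)+\rk E_{-D}(\Q)\le 1$ whenever $h_K$ is odd, so once the Heegner point $S^*\in E_{\rho n}(\Q)$ is shown nontrivial, both rank statements follow simultaneously---there is no separate treatment of $E_{-\epsilon D}$. Your plan to prove $L(E_{-\epsilon D},1)\neq 0$ independently and then invoke Coates--Wiles is not wrong in spirit, but it is a different and heavier argument than what is actually needed, and you do not make the link between Theorem~\ref{fundunitcube} and that particular $L$-value precise.

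The genuine gap is in your nontriviality argument for the Heegner point. You write that $P$ ``corresponds\ldots to the fundamental unit $u_K$ raised to a power governed by $h_K$; a fundamental unit has infinite order, so $P$ is nontorsion.'' But the correspondence in play is the $2$-descent map $r_L\colon E_{\rho n}(L)\to L^\times/(L^\times)^2$ with $L=R_{6n}$, and what Theorem~\ref{fundunitcube} delivers is $r_L([3]S^*)\equiv u\pmod{(L^\times)^2}$. Infinite order of $u$ in $L^\times$ is irrelevant; what you must show is that $u$ is \emph{not a square in $L^\times$}. This is where $h_K$ odd actually enters: since $u$ is a unit, $K(\sqrt{u})/K$ can ramify only above $2$; if it were unramified it would sit inside the Hilbert class field of $K$, impossible since $[H_K:K]=h_K$ is odd; hence $K(\sqrt{u})/K$ is ramified at $2$, whereas $R_{3n}/\Q$ is unramified outside $3n$, so $\sqrt{u}\notin R_{3n}$ and (as $[L:R_{3n}]=3$) $\sqrt{u}\notin L$. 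Your proposal locates the oddness of $h_K$ in a Selmer-group bound inside a Kolyvagin/Rubin argument, which is not where it does the work here; without the ramification step above, the inference ``$u$ has infinite order $\Rightarrow$ $P$ nontorsion'' does not go through.
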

\begin{remark} While we prove that the curve $E_{\epsilon D}$ has algebraic rank $1$ using Heegner point constructions, the fact that $E_{-\epsilon D}$ has algebraic rank $0$ follows from the following bound due to Cassels \cite{Cas}:  If $N$ is an odd integer not congruent to $\pm1\!\pmod{9}$ such that no exponent in the prime factorization of $N$ is divisible by $3$ and the class number of $\Q(\!\sqrt[3]{N})$ is odd, the sum of the ranks of $E_N(\Q)$ and $E_{-N}(\Q)$ is at most $1$.
\end{remark}

Let $\omega\coloneqq e^{2\pi i/3}$, $n=\abs{ab^5}$, and let $R_n$ denote the ring class field of the order $\Z[n\omega]\subseteq\Z[\omega]$.  In Section \ref{modparam}, we construct modular functions $X,Y\colon X(6)\to\mathbb{P}^1$ in terms of division values of the Weierstrass $\wp$-function that form a modular parametrization $\phi\coloneqq(X,Y)\colon X(6)\isom E_1$; this approach was taken by Monsky \cite{M1,M2} for constructing a modular parametrization for the curve $2y^2 = x^4 + 1$.  In order to construct nontrivial rational points on $E_{\epsilon D}$, we start with the point $\phi(n\omega)\in E_1(R_{6n})$, and consider its image in $\widetilde{E}_2$ under an isomorphism over $\Q(\omega)$.  We then generalize the methods in Satg\'e \cite{Sa} to perform a cubic twist and obtain a point in $\smash{\widetilde{E}_{2n^2}}(R_n)$.  Since $E_D$ is a twist of $E_1$ of degree $6$, we still need to perform a quadratic twist in order to obtain a rational point on $E_D$.  If we define $\rho\coloneqq (-1)^{(n-1)/2}$, we note that $E_{\epsilon D}\cong E_{\rho n}$ over $\Q$, $E_{\rho n}\cong E_{-27\rho n}$ over $\Q(\omega)$, and $E_{-27\rho n}$ is a quadratic twist of the elliptic curve $\smash{\widetilde{E}_{2n^2}}$ over $\Q(\!\sqrt{\smash{\hspace{-0.05em}\rho n}\vphantom{2}})$.  The final step in our construction is to take the image of our point in $\smash{\widetilde{E}_{2n^2}}(R_n)$ under these isomorphisms, which yields a point in $E_{\epsilon D}(R_n)$, and then take the trace over $R_n(\Q)$ to obtain a point $S\in E_{\epsilon D}(\Q)$.  We will show that the point $S$ is nontrivial, and also determine additional information on how the point $S$ fits in the group $E_{\epsilon D}(\Q)$.  For simplicity, our arguments in Section \ref{heegner} are expressed in terms of the point $S^*\in E_{\rho n}(\Q)$ (defined formally in equation (\ref{thepointS})) which maps to $S$ under the isomorphism $E_{\rho n}\smash{\isom}E_{\epsilon D}$.

\begin{thm}\label{rankgenerator} In the setting of Theorem \ref{mainresult}, the Heegner point trace $S\in E_{\epsilon D}(\Q)$ is an odd multiple of the generator of $E_{\epsilon D}(\Q)$.
\end{thm}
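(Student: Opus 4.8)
The plan is to prove the two quantitative assertions hidden in the statement separately --- that $S$ is nontorsion and that $S\notin 2E_{\epsilon D}(\Q)$ --- and to combine them with the triviality of torsion (noted in the introduction) and the Cassels bound (quoted in the remark) to conclude. Both assertions will follow from a single computation in the $2$-descent of $E_{\epsilon D}$ over the cubic field $K$. Writing $f(x)=x^3+\epsilon D$ and letting $\theta$ be its real root, so that $\Q(\theta)=\Q(\!\sqrt[3]{\hspace{-0.05em}D})=K$, the descent map
\[
\delta\colon E_{\epsilon D}(\Q)/2E_{\epsilon D}(\Q)\hookrightarrow K^\ast/(K^\ast)^2,\qquad (x_0,y_0)\longmapsto x_0-\theta,
\]
is injective, since $E_{\epsilon D}$ has no rational $2$-torsion ($\epsilon D$ not being a cube). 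I would first reduce the theorem to the single claim that $\delta(S)\neq0$. Indeed, $\delta(S)\neq0$ forces $S\notin 2E_{\epsilon D}(\Q)$, hence $S\neq O$, and since the torsion subgroup is trivial this already gives $\rk E_{\epsilon D}(\Q)\geq1$; the Cassels bound then pins the rank at exactly $1$, so that $E_{\epsilon D}(\Q)\cong\Z$, and $S\notin 2E_{\epsilon D}(\Q)$ says precisely that $S$ is an odd multiple of a generator. Note that since the isomorphism $E_{\rho n}\cong E_{\epsilon D}$ over $\Q$ carries $S^\ast$ to $S$ and commutes with $\delta$, it is harmless to carry out the computation on $E_{\rho n}$ in terms of the point $S^\ast$.

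Next I would locate where $\delta(S)$ can lie. Taking norms gives $N_{K/\Q}(x_0-\theta)=f(x_0)=y_0^2$, so $\delta(S)$ lands in the subgroup of $K^\ast/(K^\ast)^2$ on which $N_{K/\Q}$ is a square, and the usual local analysis shows the class is unramified away from $2$ and the primes dividing $6ab$. The hypothesis that $h_K$ is odd is decisive here: it forces $\Cl(K)[2]=0$, so every such class is represented by a unit, and the image of $\delta$ is controlled by $\oK^\ast/(\oK^\ast)^2$. Since $K$ is a cubic field of signature $(1,1)$, Dirichlet's unit theorem gives $\oK^\ast\cong\{\pm1\}\times\langle u\rangle$ for a fundamental unit $u$, so $\delta(S)$ is a class in the rank-two group $\langle-1,u\rangle$ cut down by the square-norm condition. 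Since $N_{K/\Q}(-1)=-1$ is not a square, this condition already eliminates $-1$ and leaves only the class of $\pm u$ as the sole candidate nontrivial value of $\delta(S)$.

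The heart of the argument is the explicit evaluation of $\delta(S)$. I would trace the affine coordinate of $S$ back through the chain of maps defining it --- the isomorphism $E_1\cong\widetilde E_2$ over $\Q(\omega)$, the cubic twist to $\widetilde E_{2n^2}$ generalizing Satg\'e, the identifications $E_{\epsilon D}\cong E_{\rho n}\cong E_{-27\rho n}$, and finally the trace $\Tr_{R_n/\Q}$ --- so as to express $x(S)-\theta$ in terms of the special value at $n\omega$ of the level-$6$ modular function of Section \ref{modparam}. Applying $N_{K/\Q}$ to this expression is exactly the computation that produces the fundamental-unit formula advertised in the abstract, and it shows that, modulo squares and the factor $\pm1$, the class $\delta(S)$ equals the fundamental unit $u$. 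Because $u$ is not a square in $K$, this gives $\delta(S)\neq0$ and, by the reduction above, completes the proof.

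The main obstacle is this last explicit computation: carrying the coordinate through the twists and, above all, through the Galois trace $\Tr_{R_n/\Q}$ without losing track of which factors are squares in $K^\ast$, and confirming that the oddness of $h_K$ genuinely eliminates every class-group contribution so that the residual ambiguity is only by $\pm1$ and squares. Pinning down that remaining sign --- equivalently, checking that the square-norm condition is met by $u$ rather than forcing the class to be trivial --- is where the congruence conditions on $a$ and $b$ modulo $4$ and $9$, encoded in $\epsilon$ and $\rho$, will have to be brought to bear.
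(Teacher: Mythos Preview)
Your overall strategy matches the paper's: push $S$ through the $2$-descent map and land on the class of the fundamental unit $u$, then argue that class is nontrivial. But two steps in your execution do not go through as written. The norm that connects the descent to Theorem~\ref{fundunitcube} is $N_{R_{6n}/K}(X(n\omega)+1)$, not $N_{K/\Q}(x(S)-\theta)$; the latter is simply $y(S)^2$ and is vacuous modulo squares. The point is that the Heegner point $P$ lives in $E_{\rho n}(R_n)\subseteq E_{\rho n}(L)$ with $L=R_{6n}$, and the descent homomorphism $r_L$ of Lemma~\ref{grouphom} applied to $P$ gives, modulo $(L^\times)^2$, a constant times $X(n\omega)+1$. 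Because $S^*=\Tr_{R_n/\Q}P$ and $r_L$ is a homomorphism, $r_L(S^*)$ becomes the norm $N_{L/K}(X(n\omega)+1)$, and \emph{this} is what Theorem~\ref{fundunitcube} evaluates as $3^{f(n)}u^{3h_K\sigma(n/n')}\equiv u$. So the computation does not stay inside $K^\times/(K^\times)^2$; it naturally lives in $L^\times/(L^\times)^2$.

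Consequently, the nontriviality step must show $u\notin(L^\times)^2$, not merely $u\notin(K^\times)^2$, and this is where the paper spends its effort. The argument you omit is a ramification argument: since $h_K$ is odd, $K(\sqrt{u})$ cannot sit inside the Hilbert class field of $K$, so $K(\sqrt{u})/K$ is ramified; as $u$ is a unit this ramification can only lie above $2$. But $R_{3n}/\Q$ is unramified above $2$, so $\sqrt{u}\notin R_{3n}$, and since $[L:R_{3n}]=3$ is odd, $\sqrt{u}\notin L$ either. Your simpler observation that a fundamental unit is not a square in $K$ does not close this gap. (A smaller issue: in your Selmer step, odd $h_K$ only lets you represent the class by an $S$-unit for the set of bad primes, not by a global unit; the restriction to $\langle -1,u\rangle$ is not justified as stated.)
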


The complexity in proving Theorem \ref{mainresult} lies in the nontriviality argument for $S^*\in E_{\rho n}(\Q)$.  The most important tool in this argument is our new identity relating the norm of a special value of the modular function $X(\tau)+1$ to the fundamental unit of an associated cubic field.

\begin{thm}\label{fundunitcube} Let $n$ be any positive integer relatively prime to $6$ such that $n\not\equiv\pm1\!\pmod{9}$ and no exponent in the prime factorization of $n$ is a multiple of $3$, let $u$ be the (unique) fundamental unit of the cubic field $K\coloneqq\Q(\!\sqrt[3]{\vphantom{2}\hspace{-0.05em}n})$ greater than $1$, and let $X\colon X(6)\to\mathbb{P}^1$ be the modular function of weight $0$ defined in Section \ref{modparam}. Then $X(n\omega)\in R_{6n}$, and
\[N_{R_{6n}/K}(X(n\:\!\omega)+1)=3^{f(n)}u^{3h_K\sigma(n/n')},\]
where $n'$ is the largest squarefree divisor of $n$ and\[f(n)\coloneqq n\prod_{p\mid n}\Big[1-p^{-1}\Big(\frac{p}{3}\Big)\Big].\]\end{thm}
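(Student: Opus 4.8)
The plan is to realize $X(n\,\omega)+1$ as the value of an elliptic (modular) unit at a CM point and to compute its norm by combining the main theorem of complex multiplication with the second limit formula of Kronecker, exactly paralleling the quadratic case of Dirichlet. First I would use the parametrization $\phi=(X,Y)\colon X(6)\isom E_1$ to pull back the rational function $x+1$ on $E_1$: its divisor on $E_1$ is $2[T]-2[O]$ with $T=(-1,0)$ the $2$-torsion point and $O$ the origin, so $X+1$ is a modular function for $\Gamma(6)$ with divisor $2[\phi^{-1}(T)]-2[\phi^{-1}(O)]$. Since $\phi$ sends the cusps of $X(6)$ to torsion points of $E_1$, both $\phi^{-1}(T)$ and $\phi^{-1}(O)$ are cusps, whence $X+1$ is a modular unit; using the description of $X$ in Section \ref{modparam} through division values of $\wp$, I would write it explicitly as a finite product of Siegel functions $g_a$ with $a\in\tfrac16\Z^2/\Z^2$, reducing everything to the CM theory of Siegel units.

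The point $\tau=n\,\omega$ corresponds to the proper $\Z[n\,\omega]$-module $\Z+\Z\,n\omega$ for the order of conductor $n$ in $\Z[\omega]$. The main theorem of complex multiplication, in the form of Shimura's reciprocity law, then gives $X(n\,\omega)\in R_{6n}$ and identifies the Galois conjugates: for $\sigma_{\mathfrak a}\in\Gal(R_{6n}/\Q(\omega))\cong\Cl(\Z[6n\,\omega])$ indexed by a proper ideal $\mathfrak a$, the conjugate $(X(n\,\omega)+1)^{\sigma_{\mathfrak a}}$ is again a Siegel value at the CM point attached to $\mathfrak a^{-1}$. Because $K\subseteq K(\omega)\subseteq R_{6n}$ — the Galois closure $K(\omega)=\Q(\sqrt[3]{\hspace{-0.05em}n},\omega)$ being the cubic ring-class extension of $\Q(\omega)$ cut out by a cubic character $\chi$ — the norm $N_{R_{6n}/K}$ makes sense, and I would factor it as a product over the subgroup $\Gal(R_{6n}/K(\omega))$ followed by $N_{K(\omega)/K}$ (complex conjugation). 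I then treat the archimedean and non-archimedean parts of this norm separately.

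For the exponent of $u$ I would pass to logarithms at the archimedean places of $K$: the quantity $\log\abs{N_{R_{6n}/K}(X(n\,\omega)+1)}-f(n)\log 3$ is a $\Z$-combination $\sum_{\sigma}\log\abs{(X(n\,\omega)+1)^{\sigma}}$ of logarithms of Siegel values at the CM points conjugate to $n\,\omega$. The second Kronecker limit formula evaluates precisely such $\chi$-weighted sums as the special value $L(1,\chi)$ of the $L$-function of the cubic ring-class character $\chi$ of $\Q(\omega)$; since $\zeta_K(s)=\zeta(s)L(s,\chi)$, the analytic class number formula for $K$ (signature $(1,1)$, regulator $R_K=\log u$) rewrites $L(1,\chi)=\tfrac{2\pi}{\sqrt{\abs{d_K}}}\,h_K\log u$. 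Matching normalizations produces the exponent $3h_K\sigma(n/n')$, where the divisor-sum $\sigma(n/n')$ emerges from the conductor of $\chi$ together with the comparison of the order $\Z[n\,\omega]$ with the maximal order, and the positivity $u>1$ together with $X(n\,\omega)+1>0$ under the real embedding fixes the sign.

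For the power of $3$, $X(n\,\omega)+1$ is a unit outside the primes dividing the level $6$, and the coprimality of $n$ with $6$ isolates the prime $3$; since $n\not\equiv\pm1\pmod 9$, the prime $3$ is totally ramified in $K$, say $3\oK=\p^3$, so the entire non-unit part of the norm is a power of $3$. Its exact value I would read off from the orders of vanishing of the $g_a$ at the cusps (governed by the Bernoulli polynomial $\mathbf B_2$) together with the reduction of the CM point $n\,\omega$ at the prime above $3$; summing these local contributions over the conjugates gives a multiplicative function of $n$ whose local factor at $p\mid n$ is controlled by the splitting symbol $\big(\tfrac p3\big)$ of $p$ in $\Q(\omega)$, yielding exactly $f(n)=n\prod_{p\mid n}\big[1-p^{-1}\big(\tfrac p3\big)\big]=3\,h(\Z[n\,\omega])$. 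The main obstacle throughout is the exact bookkeeping for the non-maximal order $\Z[n\,\omega]$: obtaining the precise exponents $3h_K\sigma(n/n')$ and $f(n)$ — not merely up to bounded error — requires reconciling the conductor-$n$ ring-class data with the field invariants of $K$, and it is exactly here that the hypotheses $n\not\equiv\pm1\pmod 9$ and ``no exponent divisible by $3$'' are used to keep the ramification at $3$ and the order-versus-field comparison clean.
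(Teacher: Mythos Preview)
Your plan is essentially the paper's proof, only phrased in the classical language of Siegel units and the second Kronecker limit formula rather than developed from scratch. The paper does not name Siegel functions or Kronecker's formula explicitly: instead it packages the Galois conjugates of $X(n\omega)+1$ into a single modular function $U$ on $X(6n)$ (your product over $\Gal(R_{6n}/kK)$), computes the divisor of $U$ cusp by cusp, and then proves the identity $\log\lvert U(\tau)\rvert=\lim_{s\to1}F(\tau,s)+\tfrac12 f(n)\log 3$ directly as an equality of harmonic functions with matching singularities (Theorem~\ref{harmonicfns}), where $F$ is a weighted sum of non-holomorphic Eisenstein series for $\Gamma(3n)$. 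This is precisely the content of the second limit formula you invoke. The evaluation $F(\omega,s)=\text{(constant)}\cdot L_k(s,\chi)/\zeta(2s)$ (Theorem~\ref{lfunction}) and the class number formula finish the argument exactly as you describe.

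One point of divergence worth noting: you propose to extract the power of $3$ separately, via the $\p$-adic valuation of the Siegel values at the CM point (reduction at the prime above $3$). The paper instead reads off the factor $3^{f(n)}$ archimedeanly, from the \emph{leading Fourier coefficients} of $X+1$ at the cusps of $X(6)$ (Lemma~\ref{xplusone}(b)), which contribute the constant $\tfrac12 f(n)\log 3$ in the comparison of $\log\lvert U\rvert$ with the Eisenstein limit. Your route via valuations is viable but would require a separate $3$-adic computation; the paper's way keeps everything inside the limit-formula framework. Either way the bookkeeping for the non-maximal order $\Z[n\omega]$---producing the exact exponents $3h_K\sigma(n/n')$ and $f(n)$---is, as you anticipate, where most of the labor lies, and the paper handles it by the explicit cusp calculations of Lemmas~\ref{ordersofconjugates}--\ref{divisorcoeff}.
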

\begin{remark} This theorem is an analogue of a theorem of Dirichlet \cite{E, LD} from 1840 expressing the fundamental unit of a real quadratic field in terms of the class number of that field and a norm over a cyclotomic field extension.  In particular, a special case of that theorem states that if $D>1$ is a squarefree integer such that $D\equiv1\!\pmod{4}$, $L\coloneqq\Q(\sqrt{D})$, $u$ is the (unique) positive fundamental unit of $\oL$ greater than $1$, and $a$ is a quadratic nonresidue modulo $D$, 
\[N_{\Q(\zeta_D)/L}\bigg(\frac{1-\zeta_D^a}{1-\zeta_D}\bigg)=u^{2h_L},\]
where $\zeta_D\coloneqq e^{2\pi i/D}$.   Our theorem is also inspired by the ability to determine the prime factorization of special values of modular functions \cite{L}.\label{dirichlet}\end{remark} 

The proof of Theorem \ref{fundunitcube} has two main components.  The first, Theorem \ref{harmonicfns}, relates the logarithm of an appropriate product of translates by elements of $\SL_2\Z$ of the modular function $X(\tau)+1$ to the limit as $s\to1$ of a weighted sum of non-holomorphic Eisenstein series of the form $E_{t\in X(3n)}(\tau,s)$.  The second, Theorem \ref{lfunction}, rewrites the desired special value of the weighted sum of non-holomorphic Eisenstein series in terms of the Riemann zeta function and a Dirichlet $L$-function over $\Q(\omega)$ associated to a cubic character.  The remaining ingredients are the class number formula, which is central to the proof of Dirichlet's theorem (see Remark \ref{dirichlet}), and the Shimura Reciprocity Theorem \cite{Shim}, which we use to rewrite the norm of $X(n\omega)+1$ as the value at $\omega$ of a modular function on $X(6n)$.  

In Sections \ref{modparam} and \ref{nonhol}, we provide the necessary preliminaries in modular forms and non-holomorphic Eisenstein series.  We prove Theorem \ref{fundunitcube} in Section \ref{galconj}, and Theorems \ref{mainresult} and \ref{rankgenerator} in Section \ref{heegner}. \vspace{0.3em}
\section{\label{modparam}Modular Functions.}
Let us review some results about modular functions for congruence subgroups of $\SL_2\Z$ and their zeros and poles, and then construct modular forms $X$ and $Y$ of level $6$ parametrizing the elliptic curve $E_1$ with equation $y^2 = x^3 + 1$.  After analyzing their properties, we will state the Shimura Reciprocity Theorem \cite{Shim}, which serves an essential role in the Heegner point method by enabling us to determine the field of definition of a special value of a modular function.  We refer the reader to Shimura \cite{Shim}, Lang \cite{L}, Silverman \cite{Silv2}, and Iwaniec \cite{I1} for more details about modular functions.

Let $\Gamma\le\SL_2\Z$ be a congruence subgroup, i.e., a subgroup containing $\Gamma(N)$ for some integer $N$, and let $S$ and $T$ denote the inversion and translation matrices, respectively.  Let $s\in\uhp^*$ be a cusp and let $\gamma=\begin{bsmallmatrix}\!a&b\!\\\!c&d\!\end{bsmallmatrix}\in\SL_2\Z$ be a matrix such that $s=\gamma\infty$.  If $k$ is an even integer and $f$ is a modular function of weight $k$ for $\Gamma$, define $f_\gamma(\tau)\coloneqq(c\tau+d)^{-k}f(\gamma\tau)$.  Then $f_\gamma$ is a modular function of weight $k$ for the group $\gamma^{-1}\Gamma\gamma$.  Let $m$ be the smallest positive integer such that $T^m\in\gamma^{-1}\Gamma\gamma$; then $f_\gamma$ can be expressed as a function of $q_m\coloneqq e^{2\pi i\tau/m}$.

\begin{defn} If $f$ is a modular function of weight $k$, then the \textit{order} of $f$ at the cusp $s$ is
\[\ord_{s\,\in X(\Gamma)} f(\tau) \coloneqq \ord_{q_m=0} f_\gamma(q_m).\]
\end{defn}

Let $\mathcal{F}_\Gamma$ be a fundamental domain for the (left) action of $\Gamma$ on $\uhp$, and let $n_\Gamma\coloneqq[\SL_2\Z:\{\pm1\}\cdot\Gamma]$.  Let $S_\Gamma$ denote a set of inequivalent cusps for $\Gamma$, and let $S_i$ denote the set of inequivalent elliptic points of order $i$ (i.e., points where the covering map $\uhp\to\Gamma\backslash\uhp$ is branched with order $i$).  Then the poles and zeros of any modular function of weight $k$ for $\Gamma$ satisfy the following identity (which is Corollary I.3.8 in \cite{Silv2} for the case $\Gamma=\SL_2\Z$, and is the synthesis of Propositions 1.40 and 2.16 in \cite{Shim} in general).

\begin{prop}\label{zeropolesum} \textup{\cite{Shim, Silv2}} If $f$ is a modular function of weight $k$ for $\Gamma$,
\[\sum_{s\in S_\Gamma}\ord_s f(\tau) + \frac{1}{2}\sum_{s\in S_2}\ord_s f(\tau) + \frac{1}{3}\sum_{s\in S_3}\ord_s f(\tau) + \sum_{\rho\in\mathcal{F}_\Gamma\setminus(S_2\cup S_3)}\ord_\rho f(\tau) = \frac{n_\Gamma k}{12}.\]
\end{prop}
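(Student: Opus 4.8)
The plan is to reduce the general statement to the classical valence formula for the full modular group $\SL_2\Z$, where $n_{\SL_2\Z}=1$ and the formula reads $\ord_\infty f+\tfrac12\ord_i f+\tfrac13\ord_\rho f+\sum_P\ord_P f=k/12$; this base case is the cited Corollary I.3.8 of \cite{Silv2}. I would first recall its proof for completeness. Integrate $\tfrac{1}{2\pi i}\,d\log f$ counterclockwise around the boundary of the standard fundamental domain, truncated at large height near $\infty$ and with small circular arcs excised around $i$, $\rho$, and any boundary zeros or poles. The argument principle converts the integral into $\sum_P\ord_P f$ over interior points; the top truncation, read in the coordinate $q=e^{2\pi i\tau}$, yields $-\ord_\infty f$; the two vertical sides, identified by $T$, cancel; the excised arcs around $i$ and $\rho$ subtend angles $\pi$ and $2\pi/3$, producing the fractional terms $\tfrac12\ord_i f$ and $\tfrac13\ord_\rho f$; and the bottom arc along $\abs{\tau}=1$, whose two halves are exchanged by $S\colon\tau\mapsto-1/\tau$, contributes the weight term $k/12$, arising from the factor $\tau^{k}$ in $f(-1/\tau)=\tau^{k}f(\tau)$.

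For a general congruence subgroup $\Gamma$, let $R$ be a set of representatives for the cosets $\{\pm1\}\Gamma\backslash\SL_2\Z$, so $\abs{R}=n_\Gamma$, and form the norm $N(f)\coloneqq\prod_{\gamma\in R}f_\gamma$. Using the cocycle identity $(f_\gamma)_\delta=f_{\gamma\delta}$ together with $f_\eta=f$ for $\eta\in\{\pm1\}\Gamma$ (the $-1$ case uses that $k$ is even), right multiplication by any $\delta\in\SL_2\Z$ merely permutes the factors, so $N(f)$ is a modular function of weight $n_\Gamma k$ for $\SL_2\Z$. Applying the base case to $N(f)$ gives
\[\ord_\infty N(f)+\tfrac12\ord_i N(f)+\tfrac13\ord_\rho N(f)+\sum_{P\ne i,\rho,\infty}\ord_P N(f)=\frac{n_\Gamma k}{12},\]
and it remains to identify each term on the left with the corresponding sum appearing in the proposition.

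The matching rests on analyzing the covering $\pi\colon X(\Gamma)\to X(1)$, which has degree $n_\Gamma$ and is unramified away from $i$, $\rho$, $\infty$. Since $f_\gamma=(c\tau+d)^{-k}f(\gamma\,\cdot)$ and $c\tau+d$ is holomorphic and nonvanishing at the non-real points $i,\rho$, the order in $\tau$ satisfies $\ord_{\tau_P}f_\gamma=\ord_{\gamma\tau_P}f$, where $\tau_P$ represents $P\in\{i,\rho\}$. Hence the multiset $\{\gamma\tau_P:\gamma\in R\}$ distributes over the points $Q\in X(\Gamma)$ above $P$, each $Q$ occurring with multiplicity equal to its ramification index $e_Q$ (a double-coset count against the stabilizer of $\tau_P$), giving $\ord_P N(f)=\sum_{Q\mapsto P}e_Q\,\ord_Q f$. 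At $P=i$ one has $e_Q=1$ exactly when $Q$ is elliptic of order $2$ and $e_Q=2$ otherwise; at $P=\rho$, $e_Q=1$ for elliptic points of order $3$ and $e_Q=3$ otherwise. Thus $\tfrac12\ord_i N(f)$ splits into $\tfrac12\sum_{Q\in S_2}\ord_Q f$ plus the integer-coefficient contribution of the non-elliptic points over $i$, and likewise $\tfrac13\ord_\rho N(f)$ produces $\tfrac13\sum_{Q\in S_3}\ord_Q f$ plus the non-elliptic contribution over $\rho$; combined with $\sum_{P\ne i,\rho,\infty}\ord_P N(f)$, these reconstruct the single sum over $\mathcal{F}_\Gamma\setminus(S_2\cup S_3)$. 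For the cusp term I would expand in the uniformizers $q_{m_s}$: each of the $m_s$ cosets $\gamma$ with $\gamma\infty$ lying over a cusp $s$ of width $m_s$ expands in $q^{1/m_s}$ and contributes $\tfrac{1}{m_s}\ord_s f$ to the order in $q=q_1$, so each cusp contributes $\ord_s f$ and $\ord_\infty N(f)=\sum_{s\in S_\Gamma}\ord_s f$.

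I expect the main obstacle to be precisely this bookkeeping at the ramified points. The crux is verifying cleanly that the ramification indices over $i$ and $\rho$ take only the values $\{1,2\}$ and $\{1,3\}$, so that the half- and third-weights attach exactly to the genuine elliptic points $S_2$ and $S_3$ while ramified non-elliptic points acquire integer coefficient $1$, and tracking the cusp widths $m_s$ through the fractional $q$-expansions so that each cusp contributes with coefficient exactly $1$. Once the identity $\ord_P N(f)=\sum_{Q\mapsto P}e_Q\,\ord_Q f$ and the cusp-width computation are established, the proposition follows by summing, with the right-hand side $n_\Gamma k/12$ supplied directly by the base case.
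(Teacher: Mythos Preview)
Your argument is correct and is in fact the standard reduction: pass from $f$ to the norm $N(f)=\prod_{\gamma\in R}f_\gamma$, apply the level-$1$ valence formula, and unwind the contributions via the ramification data of $X(\Gamma)\to X(1)$. The bookkeeping you flag as the main obstacle is genuine but routine; your identifications $e_Q\in\{1,2\}$ over $i$ and $e_Q\in\{1,3\}$ over $\rho$, and the cusp-width count, are all right. One small point worth making explicit in the cusp step: the $m_s$ factors $f_{\gamma T^j}$ differ by $\tau\mapsto\tau+j$, so their leading terms in $q^{1/m_s}$ are nonzero scalar multiples of one another and cannot cancel in the product, confirming $\ord_\infty N(f)=\sum_s\ord_s f$.

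The paper itself does not supply a proof of this proposition; it simply cites it as Corollary~I.3.8 of \cite{Silv2} for $\Gamma=\SL_2\Z$ and as the combination of Propositions~1.40 and~2.16 of \cite{Shim} in general. Silverman's argument for the base case is precisely the contour integration you sketch. Shimura's treatment is organized somewhat differently---he first develops the structure of $X(\Gamma)$ as a compact Riemann surface (Proposition~1.40, including the elliptic-point and cusp analysis you need) and then computes the degree of the divisor of an automorphic form directly on that surface (Proposition~2.16)---but the underlying content is the same ramification accounting you perform. Your norm-to-level-$1$ approach has the advantage of being self-contained once the classical $k/12$ formula is in hand, at the cost of the coset bookkeeping; Shimura's approach is cleaner conceptually but requires more Riemann-surface infrastructure up front.
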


An important consequence of this proposition is that a (holomorphic) modular form of weight $k$ for $\Gamma$ has no zeros away from cusps and elliptic points whenever the first three sums in the proposition add to $n_\Gamma k/12$.  This will enable us to construct modular forms for which we have complete knowledge of the zeros and poles and their orders.  By Proposition \ref{nozeros} (below), we can also prove the equality of two different modular functions up to a constant factor only by using the orders of their zeros and poles.  Modular functions of weight $0$ are meromorphic functions between the Riemann surfaces $X(\Gamma)\coloneqq \Gamma\backslash\uhp^*$ (the modular curve associated to $\Gamma$) and $\mathbb{P}^1(\C)$, which correspond to algebraic morphisms between the corresponding complex projective curves (see, for example, Donaldson \cite{D}, \S11).  The orders of modular functions of weight $0$ at non-elliptic points of $X(\Gamma)$ as we have defined above are the same as the orders of the functions viewed as algebraic morphisms of complex projective curves. The following proposition is then a direct consequence of basic properties of algebraic morphisms between projective curves.

\begin{prop}\label{nozeros} Let $f$ be a modular function of weight $0$ for $\Gamma$ and suppose that $f$ has no poles in $\uhp^*$.  Then $f$ must be identically constant.
\end{prop}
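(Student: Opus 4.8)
The plan is to exploit the compactness of the modular curve $X(\Gamma)$ together with the rigidity of morphisms between projective curves. First I would recall that, because $\uhp^*$ includes the cusps (the $\Gamma$-orbits of $\Q\cup\{\infty\}$), the quotient $X(\Gamma)=\Gamma\backslash\uhp^*$ is a \emph{compact} Riemann surface. A weight-$0$ modular function $f$ for $\Gamma$ descends to a meromorphic function on $X(\Gamma)$, and hence — as noted in the paragraph preceding the proposition — corresponds to an algebraic morphism $f\colon X(\Gamma)\to\mathbb{P}^1(\C)$ of complex projective curves.

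Next I would translate the hypothesis into a statement about the image of this morphism. The assumption that $f$ has no poles in $\uhp^*$ — where $\uhp^*$ already contains the cusps — means precisely that $f$ never attains the value $\infty$, so its image omits the point $\infty\in\mathbb{P}^1(\C)$. The key structural input is then the standard fact that a non-constant morphism between complex projective curves is surjective; equivalently, a non-constant holomorphic map out of a compact Riemann surface is surjective, since its image is open by the open mapping theorem and closed by compactness of the source. Were $f$ non-constant, its image would therefore be all of $\mathbb{P}^1(\C)$, contradicting the omission of $\infty$. Hence $f$ must be identically constant.

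The content here is entirely bookkeeping rather than a genuine obstacle: the one point requiring care is that the relevant curve is the \emph{compactified} modular curve $X(\Gamma)$ and not the open quotient $\Gamma\backslash\uhp$, so that the absence of poles at the cusps — guaranteed by the hypothesis, since the cusps lie in $\uhp^*$ — is exactly what rules out $f$ being a nonconstant rational function with a pole pushed off to the boundary. For readers preferring an analytic formulation, one may argue directly that on the compact surface $X(\Gamma)$ the continuous function $\abs{f}$ attains its maximum, whereupon the maximum modulus principle forces $f$ to be constant.
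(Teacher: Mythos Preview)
Your argument is correct and is essentially the same as the paper's: the paper simply remarks that weight-$0$ modular functions correspond to algebraic morphisms $X(\Gamma)\to\mathbb{P}^1$ and that the proposition is then a direct consequence of basic properties of such morphisms, which is exactly the surjectivity-of-nonconstant-morphisms fact you invoke. Your additional remark via the maximum modulus principle is a fine alternative phrasing of the same idea.
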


For the applications of interest, we will consider the congruence subgroups
\begin{align*}
\Gamma(N)&\coloneqq\{\gamma\in\SL_2\Z:\gamma\equiv\id\br\pmod{N}\}\intertext{and}
\Gamma_0(N)&\coloneqq\Big\{\gamma=\begin{bmatrix}a&b\\c&d\end{bmatrix}\in\SL_2\Z: N\mid c\Big\}.
\end{align*}
The associated modular curves $X(\Gamma)$ are commonly denoted $X(N)$ and $X_0(N)$, respectively.  The subgroups $\Gamma(N)$ and $\Gamma_0(N)$ have indices (see Proposition 1.43 of \cite{Shim})
\begin{align*} [\SL_2\Z:\{\pm1\}\cdot\Gamma(N)]&=\begin{cases} N(2N-1) & \text{if }N \le 2 \\ \frac{N^3}{2}\prod_{p\mid N}\big(1-\frac{1}{p^2}\big) & \text{if }N>2
\end{cases}
\intertext{and}
[\SL_2\Z:\{\pm1\}\cdot\Gamma_0(N)]&=N\prod_{p\mid N}\big(1+\tfrac{1}{p}\big).\end{align*}
For these groups $\Gamma$, we will need to choose the sets $S_\Gamma$, $S_2$, and $S_3$ in order to apply Proposition \ref{zeropolesum} to a modular function for $\Gamma$.  It turns out that when $6\mid N$, the action of $\Gamma_0(N)$ (and hence of $\Gamma(N)$) on $\uhp^*$ has no elliptic points (\cite{Shim}, Proposition 1.42). The following is a set of inequivalent cusps for the action on $\uhp^*$ by $\Gamma(N)$ and $\Gamma_0(N)$ (see Section 2.4 of \cite{I1}).
\begin{prop}\textup{\cite{I1}}\label{inequivcusps} Let $N$ be a positive integer.\begin{enumerate}[label=\textup{(\alph*)}]
\item For each $\ell,m\in\Z$ such that $0\le\ell\le N/2$, $1\le m\le N$, and $\gcd(\ell,m,N)=1$, choose $\alpha_{\ell,m}\equiv\ell\!\pmod{N}$ and $\beta_{\ell,m}\equiv m\!\pmod{N}$ such that $\gcd(\alpha_{\ell,m},\beta_{\ell,m})=1$. 
Then the set \[\{\alpha_{\ell,m}/\beta_{\ell,m}: \gcd(\ell,m,N)=1\text{ and }0<\ell<N/2, 1\le m\le N\text{ or }\ell\in\{0, N/2\}, N/2\le m\le N\}\] is a set of inequivalent cusps for $\Gamma(N)$.  We will let $[\ell/m]$ denote the cusp $\alpha_{\ell,m}/\beta_{\ell,m}\in X(N)$.
\item For each $\ell,m\in\Z$ such that $m\mid N$ and $0\le\ell<\gcd(m,N/m)$, choose $\alpha_{\ell,m}\equiv\ell\!\pmod{\!\gcd(m,N/m)}$ such that $\gcd(\alpha_{\ell,m},m)=1$.  Then the set \[\{\alpha_{\ell,m}/m: m\mid N, 0\le \ell<\gcd(m, N/m)\}\] is a set of inequivalent cusps for $\Gamma_0(N)$.
\end{enumerate}
\end{prop}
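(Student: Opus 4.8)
The plan is to treat each cusp of a congruence subgroup $\Gamma$ as a $\Gamma$-orbit on $\mathbb{P}^1(\Q)=\Q\cup\{\infty\}$, writing a point as $[\alpha:\beta]$ with $\gcd(\alpha,\beta)=1$ and letting $\SL_2\Z$ act on the left on the primitive column vector $(\alpha,\beta)^{\mathsf T}$. Because this action is transitive on primitive vectors and $[\alpha:\beta]=[-\alpha:-\beta]$, all cusps form a single $\SL_2\Z$-orbit, so the problem reduces to describing the finer equivalence induced by $\Gamma(N)$ and by $\Gamma_0(N)$ and then exhibiting an explicit transversal in each case.

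For part (a), the central lemma is that primitive vectors $v,v'$ satisfy $v'=\gamma v$ for some $\gamma\in\Gamma(N)$ if and only if $v'\equiv v\pmod N$. The forward implication is immediate from $\gamma\equiv\id\pmod N$. For the converse I would choose $\delta\in\SL_2\Z$ with $\delta v=(1,0)^{\mathsf T}$; then $\delta v'\equiv(1,0)^{\mathsf T}\pmod N$, so writing $\delta v'=(1+Na,\,Nb)^{\mathsf T}$ I would complete this primitive vector to a matrix $\eta\in\Gamma(N)$ with $\eta(1,0)^{\mathsf T}=\delta v'$ (possible since $\gcd(1+Na,Nb)=1$, after a congruence adjustment of the second column) and set $\gamma=\delta^{-1}\eta\delta$. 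Quotienting by $[\alpha:\beta]=[-\alpha:-\beta]$ identifies the cusps of $\Gamma(N)$ with primitive vectors in $(\Z/N\Z)^2$ modulo $\pm1$, and the stated ranges for $(\ell,m)$ form a transversal for this sign action: when $0<\ell<N/2$ the vector $(-\ell,-m)$ has first coordinate outside $[0,N/2]$, while the restrictions on $m$ for $\ell\in\{0,N/2\}$ single out one representative from each pair $(\ell,m),(\ell,N-m)$ in precisely the cases where $-\ell\equiv\ell\pmod N$.

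For part (b) I would first reduce an arbitrary cusp $a/c$ (in lowest terms) to one whose denominator divides $N$, the natural target being $\gcd(c,N)$, since $\gcd(\text{denominator},N)$ is a $\Gamma_0(N)$-invariant (for $\gamma\in\Gamma_0(N)$ the new denominator $ra+sc\equiv sc\pmod N$ with $\gcd(s,N)=1$). It then suffices to decide when $a/m\sim a'/m$ for a fixed divisor $m\mid N$. Solving $\gamma\,(a,m)^{\mathsf T}=(a',m)^{\mathsf T}$ with $\gamma\in\Gamma_0(N)$ forces $m\mid r$, hence $s\equiv1\pmod{N/m}$, and combining $a'\equiv pa\pmod m$ with $ps\equiv1\pmod N$ yields $a\equiv a'\pmod{\gcd(m,N/m)}$; the converse is a matching existence construction. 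Thus the cusps with denominator $m$ are indexed by residues $\ell$ modulo $\gcd(m,N/m)$ admitting a lift $\alpha_{\ell,m}\equiv\ell$ coprime to $m$, which a prime-by-prime check shows exist exactly when $\gcd(\ell,\gcd(m,N/m))=1$, giving $\phi(\gcd(m,N/m))$ cusps per divisor and hence the asserted list.

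The vector-lifting lemma of (a) and the transversal bookkeeping are routine; the main obstacle is the equivalence criterion in (b). Both the reduction of an arbitrary denominator to a divisor of $N$ and the congruence $a\equiv a'\pmod{\gcd(m,N/m)}$ require careful manipulation of $\Gamma_0(N)$-matrices, and it is here—rather than in any counting—that the arithmetic of $\gcd(m,N/m)$ genuinely enters.
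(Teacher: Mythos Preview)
The paper does not prove this proposition at all: it is stated with a citation to Iwaniec's \emph{Topics in Classical Automorphic Forms}, \S2.4, and no argument is given. So there is no ``paper's own proof'' to compare against, and your sketch is in fact supplying what the paper outsources.

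That said, your outline is the standard one and is essentially correct. In part~(a) the key lemma---that $\Gamma(N)$-equivalence of primitive vectors is exactly congruence modulo~$N$---is proved correctly, and the transversal bookkeeping for the $\pm1$ action is fine. In part~(b) your derivation of the necessary condition $a\equiv a'\pmod{\gcd(m,N/m)}$ is right: from $ra+sm=m$ with $N\mid r$ and $\gcd(a,m)=1$ you get $s\equiv 1\pmod{N/m}$, then $ps\equiv 1\pmod N$ forces $p\equiv 1\pmod{N/m}$, and combining with $a'\equiv pa\pmod m$ gives the claim modulo $d=\gcd(m,N/m)$. The converse construction is routine.

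One point worth flagging: you correctly observe that a lift $\alpha_{\ell,m}\equiv\ell\pmod d$ with $\gcd(\alpha_{\ell,m},m)=1$ exists if and only if $\gcd(\ell,d)=1$, so that only $\phi(d)$ of the residues $0\le\ell<d$ actually yield cusps. The proposition as literally stated in the paper ranges over all $0\le\ell<\gcd(m,N/m)$ without this coprimality restriction, which is a slight imprecision; your remark is the right correction, and it matches the well-known cusp count $\sum_{m\mid N}\phi(\gcd(m,N/m))$ for $\Gamma_0(N)$.
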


Next, we consider how the properties of a modular function change when acting via a matrix $M$ of determinant greater than $1$.  In the following proposition, part (a) is based on Lang \cite{L}, Theorem 11.3, and we provide proofs for parts (b-d).

\begin{prop} Let f be a modular function for $\Gamma(N)$, let $M\in\Mat_{2\times2}\Z$ be a matrix with determinant $n>0$, and let $g(\tau)\coloneqq f(M\tau)$.\label{matrixtransform}\begin{enumerate}[label=\textup{(\alph*)}]
\item\label{gismodular} The function $g$ is a modular function for $\Gamma(nN)$.
\item\label{matrixuppertri} If $s$ is a cusp of $\uhp^*$, and $\gamma, \gamma'\in\SL_2\Z$ are matrices such that $\gamma\infty = s$ and $\gamma'\infty = Ms$, the matrix $(\gamma')^{-1}M\gamma=\begin{bsmallmatrix}\!A&B\!\\\!0&D\!\end{bsmallmatrix}$ is upper-triangular.
\item\label{orderofg} In the notation of part (b), $\ord_{s\,\in X(nN)}g(\tau) = A^2\ord_{Ms\,\in X(N)}f(\tau)$.
\item\label{gleadingcoeff} The leading coefficients of the Fourier expansion of $g$ around $s$ and the Fourier expansion of $f$ around $Ms$ have the same absolute value.
\end{enumerate}
\end{prop}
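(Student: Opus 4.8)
The plan is to reduce all four parts to the single factorization $M\gamma=\gamma'L$, where $L\coloneqq(\gamma')^{-1}M\gamma$ is the matrix of part \ref{matrixuppertri}, together with the fact that every cusp of $\Gamma(N)$ (resp.\ $\Gamma(nN)$) has the same width. For part \ref{gismodular} I would first note that $M^{-1}=\tfrac1n\operatorname{adj}M$ has integral adjugate, and write $\gamma=\id+nN\delta$ for an integer matrix $\delta$, which is possible exactly because $\gamma\equiv\id\pmod{nN}$. Then
\[M\gamma M^{-1}=\id+nN\,M\delta M^{-1}=\id+N\,M\delta\operatorname{adj}(M)\equiv\id\pmod N,\]
and this matrix has determinant $1$, so it lies in $\Gamma(N)$. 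Hence $M\gamma=(M\gamma M^{-1})M$ exhibits the required transformation law for $g(\tau)=f(M\tau)$ under $\Gamma(nN)$; holomorphy and the behaviour at the cusps, completing \ref{gismodular}, is Theorem 11.3 of \cite{L}.

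Part \ref{matrixuppertri} is immediate once one observes that $L$ fixes $\infty$:
\[L\infty=(\gamma')^{-1}M\gamma\infty=(\gamma')^{-1}Ms=(\gamma')^{-1}\gamma'\infty=\infty.\]
Since $\gamma,\gamma'\in\SL_2\Z$ and $M$ is integral, $L$ is an integer matrix with $\det L=\det M=n>0$, and any matrix fixing $\infty=[1:0]$ has vanishing lower-left entry. Thus $L=\begin{bsmallmatrix}\!A&B\!\\\!0&D\!\end{bsmallmatrix}$ with $AD=n$, and after replacing $\gamma'$ by $-\gamma'$ if needed we may take $A,D>0$.

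For parts \ref{orderofg} and \ref{gleadingcoeff} I would work in the weight-$0$ setting relevant to the functions $X,Y$ of Section \ref{modparam} (and to Lang's Theorem 11.3), so that $f_{\gamma'}=f\circ\gamma'$ and $g_\gamma=g\circ\gamma$ carry no automorphy factor. The essential input is that $\Gamma(N)$ and $\Gamma(nN)$ are normal in $\SL_2\Z$, so conjugation fixes them and every cusp has the same width: the width $m'$ of $Ms$ for $\Gamma(N)$ is $N$, and the width $m$ of $s$ for $\Gamma(nN)$ is $nN$. Writing the expansion of $f$ at $Ms$ as $f_{\gamma'}(z)=\sum_{j\ge r}a_j\,q_{m'}^{\,j}$ with $r=\ord_{Ms\in X(N)}f$ and $a_r\ne0$, I substitute $z=L\tau=(A\tau+B)/D$ into $g_\gamma(\tau)=g(\gamma\tau)=f(\gamma'L\tau)=f_{\gamma'}(L\tau)$. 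Using $q_{m'}=e^{2\pi iz/N}$, $q_m=e^{2\pi i\tau/(nN)}$, and $n=AD$, each monomial becomes
\[q_{m'}^{\,j}=e^{2\pi ijz/N}=e^{2\pi ijB/(DN)}\,q_m^{\,A^2j},\]
so $g_\gamma$ is a Laurent series in $q_m$ with lowest term $a_r\,e^{2\pi irB/(DN)}\,q_m^{\,A^2r}$. This gives $\ord_{s\in X(nN)}g=A^2r=A^2\ord_{Ms\in X(N)}f$, which is \ref{orderofg}; and since the phase $e^{2\pi irB/(DN)}$ has modulus $1$, the leading coefficient of $g$ at $s$ has the same absolute value as $a_r$, the leading coefficient of $f$ at $Ms$, which is \ref{gleadingcoeff}.

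The one point that genuinely requires care is the appearance of $A^2$ rather than $A$: it results from the interplay between the exponent scaling $A/D$ forced by $z=L\tau$ and the width ratio $m/m'=n=AD$, and it is here that normality of the principal congruence subgroups (ensuring $m=nN$ and $m'=N$) is indispensable. For a general even weight $k$ the same computation applies after replacing $f\circ M$ by the normalized slash, the only extra ingredient being the verification that the resulting automorphy prefactor $D^{-k}j(M,\gamma\tau)^k$ contributes neither to the order at $q_m=0$ nor to the modulus of the leading coefficient; I expect this bookkeeping to be the main technical obstacle away from weight $0$.
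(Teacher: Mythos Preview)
Your proposal is correct and follows essentially the same approach as the paper: both hinge on the factorization $M\gamma=\gamma'L$ with $L=\begin{bsmallmatrix}\!A&B\!\\\!0&D\!\end{bsmallmatrix}$ upper-triangular, and both extract the exponent $A^2$ from the identity $An/D=A^2$ (since $AD=n$) together with the fact that the cusp widths for $\Gamma(N)$ and $\Gamma(nN)$ are $N$ and $nN$ respectively. The only cosmetic difference is that the paper manipulates the local parameters $q_1,q_2$ directly via the relation $q_1(M^{-1}\tau)^{A^2}e^{2\pi iB/(ND)}=q_2(\tau)$, whereas you substitute $z=L\tau$ into the Fourier expansion of $f_{\gamma'}$; these are the same computation read in opposite directions.
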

\begin{proof} \ref{gismodular} This can be proven by noting that $M\Gamma(nN)M^{-1}\subseteq\Gamma(N)$; for details, see the proof in \cite{L}.

\ref{matrixuppertri} Since $((\gamma')^{-1}M\gamma)\infty = (\gamma')^{-1}(Ms) = \infty$, we conclude that $(\gamma')^{-1}M\gamma$ is upper-triangular.

\ref{orderofg} Let $q_1(\tau)\coloneqq e^{2\pi i\gamma^{-1}\tau/N}$ and $q_2(\tau)\coloneqq e^{2\pi i(\gamma')^{-1}\tau/nN}$.  Then \begin{align*}\ord_{s\,\in X(nN)}g(\tau)&=\ord_{q_1(\tau)=0}f(M\tau)\\&=\ord_{q_1(M^{-1}\tau)=0}f(\tau)\\&=A^2\ord_{q_2(\tau)=0}f(\tau)\\&=A^2\ord_{Ms\,\in X(N)}f(\tau),\end{align*}
since $q_1(M^{-1}\tau)=e^{2\pi i\gamma^{-1}M^{-1}\tau/nN}=e^{2\pi i\begin{bsmallmatrix}\!A&B\!\\\!0&D\!\end{bsmallmatrix}^{-1}(\gamma')^{-1}\tau/nN}$ and hence $q_1(M^{-1}\tau)^{An/D}e^{2\pi iB/ND}=q_2(\tau)$.  But $An/D=A^2$ since $AD=\det (\gamma')^{-1}M\gamma=\det M=n$, as desired.

\ref{gleadingcoeff} Since $q_1(M^{-1}\tau)^{A^2}$ and $q_2(\tau)$ differ by an $ND$th root of unity, so do the leading coefficients of the Fourier expansions of $f$ around $Ms$ and $g$ around $s$.
\end{proof}

If we can write the cusp $s=\alpha/\beta$ where $\alpha$ and $\beta$ are relatively prime integers, we may compute that \begin{equation}\label{matrixfootnote}A = \gcd(a\alpha+b\beta, c\alpha+d\beta)\end{equation} where $M=\begin{bsmallmatrix}\!a&b\!\\\!c&d\!\end{bsmallmatrix}$; this fact will be very helpful for the computations in Section \ref{galconj}.

As mentioned in the introduction, the division values of the Weierstrass $\wp$-function
\[e^{(N)}_{\alpha,\beta}\coloneqq\wp\Big(\frac{\alpha\tau+\beta}{N};\tau\Big),\]
constitute fundamental examples of modular forms of weight $2$ for the group $\Gamma(N)$, where by $\wp(z;\tau)$ we refer to the Weierstrass $\wp$-function associated to the lattice $\langle1,\tau\rangle$.  The division values of the Weierstrass $\wp$-function satisfy the following transformation identity (which is stated by Monsky \cite{M1} in special cases for $N$), which follows from the definition of $\wp(z;\tau)$.
\begin{lemma}\label{divvalidentity} Let $N$ be a positive integer and let $(\alpha,\beta)\not\equiv(0,0)\!\pmod{N}$.  For any $\gamma=\begin{bsmallmatrix}\!a&b\!\\\!c&d\!\end{bsmallmatrix}\in\SL_2\Z$,
\[e^{(N)}_{\alpha,\beta}(\gamma\tau)=(c\tau+d)^2e^{(N)}_{a\alpha+c\beta,b\alpha+d\beta}(\tau).\]
Further, if $(\alpha,\beta)\equiv\pm(\alpha',\beta')\!\pmod{N}$, $e^{(N)}_{\alpha,\beta}(\tau) = e^{(N)}_{\alpha',\beta'}(\tau)$.
\end{lemma}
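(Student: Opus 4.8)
The plan is to exploit the homogeneity of the Weierstrass $\wp$-function under scaling of its lattice, together with the observation that an element of $\SL_2\Z$ merely changes the basis of the lattice $\langle 1,\tau\rangle$. Writing $\Lambda_\tau\coloneqq\langle 1,\tau\rangle$, the two facts I would invoke are the homogeneity relation $\wp(\lambda z;\lambda\Lambda)=\lambda^{-2}\wp(z;\Lambda)$ for any nonzero $\lambda\in\C$, and the fact that $z\mapsto\wp(z;\Lambda)$ is even and periodic with respect to $\Lambda$; both are immediate from the series definition of $\wp$.

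First I would record the basis change underlying the whole computation: for $\gamma=\begin{bsmallmatrix}\!a&b\!\\\!c&d\!\end{bsmallmatrix}\in\SL_2\Z$ one has $(c\tau+d)\langle 1,\gamma\tau\rangle=\langle c\tau+d,\,a\tau+b\rangle=\langle 1,\tau\rangle$, where the last equality holds because $\det\gamma=1$ forces $\{a\tau+b,c\tau+d\}$ to be a basis of $\Lambda_\tau$. Hence $\Lambda_{\gamma\tau}=(c\tau+d)^{-1}\Lambda_\tau$. Next I would expand $e^{(N)}_{\alpha,\beta}(\gamma\tau)=\wp\bigl(\tfrac{\alpha\gamma\tau+\beta}{N};\Lambda_{\gamma\tau}\bigr)$, substitute this identity for $\Lambda_{\gamma\tau}$, and apply homogeneity with $\lambda=c\tau+d$ to extract the factor $(c\tau+d)^2$. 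What remains is the purely algebraic identity $(c\tau+d)(\alpha\gamma\tau+\beta)=\alpha(a\tau+b)+\beta(c\tau+d)=(a\alpha+c\beta)\tau+(b\alpha+d\beta)$, which follows directly from $\gamma\tau=\tfrac{a\tau+b}{c\tau+d}$; dividing by $N$ produces exactly the argument that defines $e^{(N)}_{a\alpha+c\beta,\,b\alpha+d\beta}(\tau)$, giving the first assertion.

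For the second claim, I would note that $\tfrac{\alpha\tau+\beta}{N}$ and $\tfrac{\alpha'\tau+\beta'}{N}$ differ by $\tfrac{(\alpha-\alpha')\tau+(\beta-\beta')}{N}$, which lies in $\Lambda_\tau$ precisely when $(\alpha,\beta)\equiv(\alpha',\beta')\pmod{N}$; the periodicity of $\wp$ then yields equality. The remaining sign case $(\alpha,\beta)\equiv-(\alpha',\beta')\pmod{N}$ reduces to the previous one after replacing $z$ by $-z$, and is handled by the evenness of $\wp$.

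I do not expect a genuine obstacle here, since the lemma is a direct consequence of the defining properties of $\wp$. The only point requiring care is the bookkeeping of indices and signs through the change of basis, so that the transformed index comes out as $(a\alpha+c\beta,\,b\alpha+d\beta)$ rather than a transposed or sign-flipped variant; verifying this is exactly the role of the algebraic identity computed above.
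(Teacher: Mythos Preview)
Your proposal is correct and is exactly the argument the paper has in mind: the paper simply remarks that the lemma ``follows from the definition of $\wp(z;\tau)$,'' and your write-up unpacks precisely that, via the homogeneity $\wp(\lambda z;\lambda\Lambda)=\lambda^{-2}\wp(z;\Lambda)$, the basis change $(c\tau+d)\Lambda_{\gamma\tau}=\Lambda_\tau$, and the evenness/periodicity of $\wp$.
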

From the Fourier expansion of the Weierstrass $\wp$-function (see Silverman \cite{Silv2}, Proposition I.6.2), we can deduce that
\begin{equation}\frac{1}{(2\pi i)^2}e^{(N)}_{\alpha,\beta}(\tau)=\frac{1}{12}+\frac{e^{2\pi i\beta/N}q^\alpha}{(1-e^{2\pi i\beta/N}q^\alpha)^2}+F(e^{2\pi i\beta/N}q^\alpha, q^N)+F(e^{-2\pi i\beta/N}q^{-\alpha}, q^N) - 2F(1, q^N),\label{fourierexp}\end{equation}
where $q\coloneqq e^{2\pi i\tau/N}$ and
\[F(T,X)\coloneqq\sum_{n=1}^\infty\frac{TX^n}{(1-TX^n)^2}=TX+(2T^2+T)X^2+(3T^3+T)X^3+(4T^4+2T^2+T)X^4+\ldots\;.\]
We will now turn to finding modular functions $X$ and $Y$ of weight $0$ for $\Gamma(6)$ such that $Y^2 = X^3 + 1$.  Let $\omega\coloneqq e^{2\pi i/3}$, and define the modular forms\newpage
\begin{align*}
A(\tau) &\coloneqq e^{(6)}_{2,1}(\tau) + \omega^2e^{(6)}_{2,3}(\tau) + \omega e^{(6)}_{2,5}(\tau),\\
B(\tau) &\coloneqq e^{(6)}_{2,1}(\tau) + \omega e^{(6)}_{2,3}(\tau) + \omega^2e^{(6)}_{2,5}(\tau),\\
C(\tau) &\coloneqq e^{(6)}_{0,1}(\tau) - e^{(6)}_{0,4}(\tau),\\
D(\tau) &\coloneqq e^{(6)}_{3,1}(\tau) - e^{(6)}_{3,4}(\tau),
\end{align*}
of weight $2$ for $\Gamma(6)$.  It follows from Lemma \ref{divvalidentity} that the associated functions $A^3$, $B^3$, $C$, and $D^2$ are modular forms of weights $6$, $6$, $2$, and $4$ respectively for the group $\Gamma_0(6)$.  By Lemma \ref{inequivcusps}, the set $\{0,\infty,1/2,1/3\}$ forms a set of inequivalent cusps for $\Gamma_0(6)$; we will show that all zeros and poles of $A^3$, $B^3$, $C$, and $D^2$ lie in this set by computing the orders at the cusps and applying Proposition \ref{zeropolesum}.  Using the Fourier expansion of the Weierstrass $\wp$-function, we can prove the following result.

\begin{lemma} The table below provides the orders of the modular forms $A^3$, $B^3$, $C$, and $D^2$ at points of the complete set of inequivalent cusps $\{0,\infty, 1/2, 1/3\}$ for $\Gamma_0(6)$.  Moreover, the modular forms $A$, $B$, $C$, and $D$ have no zeros or poles on $\uhp$.\label{tableoforders}
\begin{table}[h]\centering
$\begin{array}{c|cccc}
        \ord_s f & \:\,0\:\,& \,\infty\, & 1/2 & 1/3\\
        \hline \\[-0.9em]
        A^3 &3 &1 &0 &2\\[0.3em]
        B^3 &3 &2 &0 &1\\[0.3em]
        C   &1 &0 &1 &0\\[0.3em]
        D^2 &2 &1 &1 &0
\end{array}$\end{table}
\end{lemma}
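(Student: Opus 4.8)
The plan is to read every entry of the table off the Fourier expansion (\ref{fourierexp}) together with the transformation law of Lemma \ref{divvalidentity}, and then to obtain the final assertion from the zero--pole count of Proposition \ref{zeropolesum}. The decisive simplification is that, for $\gamma=\begin{bsmallmatrix}\!a&b\!\\\!c&d\!\end{bsmallmatrix}\in\SL_2\Z$, Lemma \ref{divvalidentity} gives $(c\tau+d)^{-2}e^{(6)}_{\alpha,\beta}(\gamma\tau)=e^{(6)}_{a\alpha+c\beta,\,b\alpha+d\beta}(\tau)$, so each weight-$2$ combination $A_\gamma,B_\gamma,C_\gamma,D_\gamma$ is again a $\Z[\omega]$-linear combination of division values $e^{(6)}_{\alpha',\beta'}$ whose $q$-expansion (with $q=e^{2\pi i\tau/6}$) is supplied by (\ref{fourierexp}). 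Since $A^3,B^3,C,D^2$ are modular for $\Gamma_0(6)$ and $(f^3)_\gamma=(f_\gamma)^3$, it suffices to determine the $q$-order of each transformed combination and then cube or square as appropriate.

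At each cusp $s$ I would fix an explicit $\gamma$ with $\gamma\infty=s$: namely $\id$ for $\infty$, $S$ for $0$ (inducing $(\alpha,\beta)\mapsto(\beta,-\alpha)$), and $\begin{bsmallmatrix}\!1&0\!\\\!2&1\!\end{bsmallmatrix},\begin{bsmallmatrix}\!1&0\!\\\!3&1\!\end{bsmallmatrix}$ for $1/2,1/3$ (inducing $(\alpha,\beta)\mapsto(\alpha+2\beta,\beta)$ and $(\alpha+3\beta,\beta)$, reduced modulo $6$). After transforming, I read the leading term using $\tfrac{1}{(2\pi i)^2}e^{(6)}_{\alpha',\beta'}=\tfrac{1}{12}+\zeta q^{\alpha'}+\cdots+\zeta^{-1}q^{6-\alpha'}+\cdots$ with $\zeta=e^{2\pi i\beta'/6}$, the term $\zeta q^{\alpha'}$ coming from the main summand in (\ref{fourierexp}) and $\zeta^{-1}q^{6-\alpha'}$ from the reflected summand $F(\zeta^{-1}q^{-\alpha'},q^6)$ (and when $\alpha'\equiv0$ the main summand instead adds $\tfrac{\zeta}{(1-\zeta)^2}$ to the constant term). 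Finally I convert: the cusp widths of $\Gamma_0(6)$ at $\infty,0,1/2,1/3$ are $1,6,3,2$, and since $q_m=q^{6/m}$ the order at $s$ in $X_0(6)$ equals $\tfrac{m}{6}$ times the $q$-order of $(A_\gamma)^3,(B_\gamma)^3,C_\gamma$, or $(D_\gamma)^2$.

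The main obstacle is the bookkeeping in the cases where the leading coefficient cancels and one must expand further. For instance, at $\infty$ the $q^2$-coefficient of $B$ is $\zeta+\zeta^3+\zeta^5=0$ (here $\zeta=e^{\pi i/3}$), so one passes to the $q^4$-coefficient; likewise the $q^1$-coefficient of $A_\gamma$ at $1/3$ cancels and one must reach $q^2$. In exactly these cases it is essential to include the reflected terms $F(\zeta^{-1}q^{-\alpha'},q^6)$, which for $\alpha'$ close to $6$ enter at low order in $q$ and would otherwise be missed. I would carry each such expansion only far enough to exhibit a nonzero coefficient, verifying nonvanishing by evaluating the resulting sum of sixth roots of unity.

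The closing assertion then follows from Proposition \ref{zeropolesum}. Because $6\mid 6$, the group $\Gamma_0(6)$ has no elliptic points, so for a holomorphic weight-$k$ form the proposition reduces to $\sum_{s}\ord_s f+\sum_{\rho\in\uhp}\ord_\rho f=n_\Gamma k/12$ with $n_\Gamma=12$. Each $e^{(6)}_{\alpha,\beta}$ is holomorphic on $\uhp$ (its indices never vanish modulo $6$), hence so are $A,B,C,D$; and the cusp orders in the table sum to $6,6,2,4$, which are exactly $n_\Gamma k/12$ for $A^3,B^3,C,D^2$. Thus $\sum_{\rho\in\uhp}\ord_\rho$ vanishes for each form, every order on $\uhp$ is zero, and therefore $A,B,C,D$ have no zeros or poles on $\uhp$.
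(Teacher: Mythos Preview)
Your proposal is correct and follows essentially the same approach as the paper: transform each division value via Lemma~\ref{divvalidentity}, read off the $q$-order from the Fourier expansion~(\ref{fourierexp}), and then invoke Proposition~\ref{zeropolesum} with $n_\Gamma=12$ to conclude that no interior zeros remain. The paper carries out only the entry $\ord_{1/3}A^3=2$ in detail, using the substitution $q\mapsto\omega q$ (which permutes the $e^{(6)}_{1,\beta}$ and makes the vanishing of low-order coefficients transparent), whereas you do the same computation by direct inspection of the root-of-unity sums and an explicit width conversion; the arguments are interchangeable.
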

\begin{proof}For the first part of the lemma, we will prove that $\ord_{1/3}A^3=2$, the proofs for the other entries of the table are similar.  Let $q\coloneqq e^{\pi i\tau/3}$.  Now $1/3 = \gamma\infty$ where $\gamma=\begin{bsmallmatrix}\!1&0\!\\\!3&1\!\end{bsmallmatrix}$, hence
\[\ord_{1/3} A^3=\ord_\infty(A^3)_\gamma=\ord_\infty\big(e^{(6)}_{5,1}+\omega^2 e^{(6)}_{5,3}+\omega e^{(6)}_{5,5}\big)^3=\ord_\infty\big(\omega e^{(6)}_{1,1}+\omega^2 e^{(6)}_{1,3}+e^{(6)}_{1,5}\big)^3.\]
But, note that by equation (\ref{fourierexp}), $e^{(6)}_{1,5}(q)=e^{(6)}_{1,3}(\omega q)=e^{(6)}_{1,1}(\omega^2 q)$, so we see that the smallest non-zero term in the Fourier expansion of $\omega e^{(6)}_{1,1}+\omega^2 e^{(6)}_{1,3}+e^{(6)}_{1,5}$ has exponent $2$.  Since the local parameter around $1/3\in X_0(6)$ is $q_3(\tau)=q(\gamma^{-1}\tau)^3$, we deduce that $\ord_{1/3} A^3=2$, as desired.

For the second part of the lemma, since $A^3$, $B^3$, $C$, and $D^2$ are modular forms, they have no poles on $\uhp^*$.  To show that they have no zeros on $\uhp$, note by Theorem \ref{zeropolesum} that the sum of the orders of the zeros of a modular form of weight $k$ for $\Gamma_0(6)$ is \[\frac{k}{12}\,[\SL_2\Z:\{\pm1\}\cdot\Gamma_0(6)]=\frac{k}{12}\bigg[\:\!6\bigg(1+\frac{1}{2}\bigg)\!\bigg(1+\frac{1}{3}\bigg)\!\bigg]=k,\]
and that this value is already attained by the sum of the orders at the cusps $0$, $\infty$, $1/2$, and $1/3$ for each of the modular forms $A^3$, $B^3$, $C$, and $D^2$.  
\end{proof}
\begin{defn}Let $X(\tau)\coloneqq-\omega^2A(\tau)/B(\tau)$ and $Y(\tau)\coloneqq -3C(\tau)/D(\tau)$.  
\end{defn} 
We can then compute that the Fourier expansions around $\infty$ of $X(\tau)$ and $Y(\tau)$ are
\begin{align*}X(\tau) &= q^{-2} + q^4 + q^{10} - q^{16} - q^{22} + q^{34} + 2q^{40} + \ldots\intertext{and} Y(\tau) &= q^{-3} + 2q^3 + q^9 - 2q^{21} - 2q^{27} + 2q^{33} + \ldots,\end{align*}
where, as before, $q=e^{\pi i\tau/3}$.  (Note that all coefficients of both Fourier expansions are integers.)
\begin{thm} The functions $X^3$ and $Y^2$ are modular functions of weight $0$ for the group $\Gamma_0(6)$, and $Y^2 = X^3 + 1$.  The map $\phi\colon X(6)\to E_1$ such that $\phi(\tau)=(X(\tau), Y(\tau))$ for all $\tau\in X(6)\setminus\{\infty\}$ is an isomorphism.\label{themodularparam}\end{thm}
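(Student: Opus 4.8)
The plan is to establish the three assertions in turn, relying throughout on Lemma \ref{tableoforders} (both the order table and the fact that $A$, $B$, $C$, $D$ have no zeros or poles on $\uhp$) together with Propositions \ref{zeropolesum} and \ref{nozeros}. For the first assertion I would simply note that, since $\omega^6=1$, we have $X^3=(-\omega^2)^3A^3/B^3=-A^3/B^3$ and $Y^2=9\,C^2/D^2$. As $A^3$ and $B^3$ are modular forms of weight $6$ for $\Gamma_0(6)$, their quotient is a modular function of weight $0$ for $\Gamma_0(6)$; likewise $C^2$ and $D^2$ both have weight $4$, so $9\,C^2/D^2$ has weight $0$. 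Hence $X^3$ and $Y^2$ are weight-$0$ modular functions for $\Gamma_0(6)$.

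For the identity $Y^2=X^3+1$, I would apply Proposition \ref{nozeros} to the weight-$0$ function $g\coloneqq Y^2-X^3$ on $X_0(6)$. Since $B$ and $D$ are nonvanishing on $\uhp$, neither $X^3$ nor $Y^2$ has a pole in $\uhp$, so $g$ is holomorphic there. At the cusps I would read the orders off the table via $\ord_s X^3=\ord_s A^3-\ord_s B^3$ and $\ord_s Y^2=2\ord_s C-\ord_s D^2$, obtaining $(0,-1,0,1)$ and $(0,-1,1,0)$ at $(0,\infty,1/2,1/3)$ respectively. The only potential pole of $g$ is thus at $\infty$; but the local parameter there is $q^6$, and the given Fourier expansions show that both $X^3$ and $Y^2$ have leading term $q^{-6}$ with coefficient $1$, so these simple poles cancel and $g$ is holomorphic on all of $\uhp^*$. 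By Proposition \ref{nozeros}, $g$ is constant, and comparing constant terms, namely $[q^0]Y^2=4$ (from $2\cdot q^{-3}\cdot 2q^3$) and $[q^0]X^3=3$ (from $3\,(q^{-2})^2 q^4$), gives $g=4-3=1$.

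For the third assertion I would compute the degree of $X$ as a map $X(6)\to\mathbb{P}^1$. The order table shows that $X^3=-A^3/B^3$ has on $X_0(6)$ a single simple pole at $\infty$, so $X^3$ has degree $1$ there (confirming in passing that $X_0(6)$ has genus $0$). Pulling back along the covering $X(6)\to X_0(6)$, whose degree is $[\SL_2\Z:\{\pm1\}\Gamma(6)]/[\SL_2\Z:\{\pm1\}\Gamma_0(6)]=72/12=6$, the function $X^3$ acquires degree $6$ on $X(6)$; since $X^3$ is the cubing map of $\mathbb{P}^1$ composed with $X$, this yields $\deg X=6/3=2$. Finally, $\phi$ extends to a morphism of smooth projective curves (a rational map from a smooth curve is automatically a morphism), and composing with the degree-$2$ map $E_1\to\mathbb{P}^1$ given by the $x$-coordinate recovers $X$, whence $2=\deg X=2\deg\phi$ and $\deg\phi=1$. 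A degree-$1$ morphism of smooth projective curves is an isomorphism, which finishes the proof.

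The step I expect to be most delicate is the third: one must apply multiplicativity of degrees correctly, both under the cover $X(6)\to X_0(6)$ and under the cubing map, and be sure that $\phi$ genuinely extends to a morphism so that its degree is well defined. By contrast, the first two assertions reduce to bookkeeping with the order table and the explicit $q$-expansions.
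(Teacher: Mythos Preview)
Your proof is correct and follows essentially the same route as the paper: use Lemma~\ref{tableoforders} to read off the cusp orders of $X^3$ and $Y^2$, observe the pole cancellation at $\infty$ from the $q$-expansions, apply Proposition~\ref{nozeros} to conclude $Y^2-X^3$ is constant, and then compare degrees of $X$ and of the $x$-coordinate map on $E_1$ to get $\deg\phi=1$. The only difference is that you supply a justification for $\deg X=2$ on $X(6)$ (via $\deg X^3=1$ on $X_0(6)$, the degree-$6$ cover $X(6)\to X_0(6)$, and the degree-$3$ cubing map), whereas the paper simply asserts this; your extra step is a welcome clarification and is correct.
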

\begin{proof} The fact that $X^3$ and $Y^2$ are modular functions of weight $0$ for $\Gamma_0(6)$ and have no poles in $\uhp$ follows from Lemma \ref{tableoforders}.  From this lemma, we can also deduce that $X^3$ and $Y^2$ have the following orders at the cusps of $X_0(6)$:
\begin{table}[h]\centering
$\begin{array}{c|cccc}
        \ord_s f & \:\,0\:\,& \,\infty\, & 1/2 & 1/3 \\
        \hline \\[-0.9em]
        X^3 &0 &-1 &0 &1\\[0.3em]
        Y^2 &0 &-1 &1 &0
\end{array}$\end{table}

\noindent Based on the Fourier expansions for $X$ and $Y$ at the cusp $\infty$, we find that $\ord_\infty(Y^2-X^3)=0$.  Thus by Proposition \ref{zeropolesum}, Lemma \ref{tableoforders}, and the table above, the modular function $Y^2-X^3$ has nonnegative order at all points of $X_0(6)$, and by Proposition 2.3, must be identically constant. Inspecting the Fourier expansions of $X$ and $Y$ at $\infty$ one more time, we conclude that $Y^2-X^3 = 1$, as desired.

To see that $\phi$ is an isomorphism, note that as algebraic morphisms, $X\colon X(6)\to\mathbb{P}^1$ has degree $2$ and the coordinate function $x\colon E_1\to\mathbb{P}^1$ has degree $2$, hence $\phi\colon X(6)\to\mathbb{P}^1$ has degree $1$ and is an isomorphism. \end{proof}

We will determine the transformation properties of $X$ and $Y$ under specific elements of $\SL_2\Z$ which will appear in our later calculations (this result is an analogue of \cite{Sa}, Proposition 1.20).
\begin{prop} Let $\gamma_\pm, \gamma'\in\SL_2\Z$ such that $\gamma_\pm\equiv\id\!\pmod{2}$, $\gamma_\pm\equiv T^{\pm1}S\!\pmod{3}$, $\gamma'\equiv\id\!\pmod{3}$, and $\gamma'\equiv TS\!\pmod{2}$.  Then we have the identities\label{modparamidentities}
\begin{align*} \phi(\gamma_\pm\tau) &= [\omega^{\mp1}]\phi(\tau)+(-\omega^{\mp1}, 0),\\[0.05em]
\phi(\gamma'\tau) &= \phi(\tau)+(0,1).\end{align*}
\end{prop}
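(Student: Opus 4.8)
The plan is to exploit the fact that both sides of each identity are isomorphisms $X(6)\isom E_1$, so that their ``difference'' is an automorphism of the curve $E_1$, and then to pin this automorphism down by a purely local analysis at the cusp $\infty$. Concretely, for $\gamma\in\{\gamma_\pm,\gamma'\}$ the map $\tau\mapsto\gamma\tau$ descends to an automorphism $\bar\gamma$ of $X(6)=\Gamma(6)\backslash\uhp^*$ (since $\Gamma(6)\trianglelefteq\SL_2\Z$), and the composite $g\coloneqq\phi\circ\bar\gamma\circ\phi^{-1}$ is a holomorphic automorphism of the curve $E_1$. Because $E_1$ has $j$-invariant $0$, every such automorphism has the form $g(P)=[u]P+Q$ for a unique translation point $Q\in E_1$ and a unique $u$ in $\mathrm{Aut}(E_1,O)\cong\mu_6$ (generated by $(x,y)\mapsto(\omega x,y)$ and $(x,y)\mapsto(x,-y)$). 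Thus it suffices to identify the pair $(u,Q)$, which I expect to read off from the behavior of $\phi$ at $\infty$.

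First I would record that $\phi(\infty)=O$, which is immediate from the expansions $X=q^{-2}+\cdots$ and $Y=q^{-3}+\cdots$ in the proof of Theorem \ref{themodularparam}. Since $Q=g(O)=\phi(\gamma\infty)$, the translation point is simply the image under $\phi$ of the cusp $\gamma\infty=a/c$; as the lower-left entries satisfy $6\nmid c$, none of these cusps is $\Gamma(6)$-equivalent to $\infty$, so $X$ and $Y$ take finite values there. I would evaluate $\phi$ at each such cusp by computing the constant terms of the $q$-expansions of $X$ and $Y$ after applying Lemma \ref{divvalidentity} to the division values comprising $A,B,C,D$, using Proposition \ref{matrixtransform}\ref{gleadingcoeff} to control the widths and leading coefficients. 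This should yield $\phi(\gamma_\pm\infty)=(-\omega^{\mp1},0)$ (a $2$-torsion point, whose defining feature is that the $Y$-constant term vanishes) and $\phi(\gamma'\infty)=(0,1)$.

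To finish, I would verify the identity directly with the candidate values in hand. Set $\psi_1(\tau)\coloneqq\phi(\gamma\tau)$ and $\psi_2(\tau)\coloneqq[u]\phi(\tau)+Q$, both isomorphisms $X(6)\isom E_1$, and consider the automorphism $\theta\coloneqq\psi_2^{-1}\circ\psi_1$ of $X(6)$. That $\theta$ fixes $\infty$ reduces exactly to the equality $\phi(\gamma\infty)=Q$ established above. That $\theta$ has derivative $1$ at $\infty$ reduces to matching the leading $q$-coefficients of $\psi_1$ and $\psi_2$ near $\infty$; this is the same information as the scalar by which $\bar\gamma$ acts on the (one-dimensional) space of holomorphic differentials on $X(6)$, namely $\eta\coloneqq\phi^*(dx/y)=dX/Y$, and it simultaneously confirms the value of $u$. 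Finally, since every automorphism of $X(6)\cong E_1$ is again of the form $P\mapsto[u']P+Q'$, one that fixes a point with unit derivative there forces $u'=1$ and then $Q'=O$, so $\theta$ is the identity and $\psi_1=\psi_2$, as claimed.

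The main obstacle is bookkeeping rather than conceptual. Transforming the division values $e^{(6)}_{\alpha,\beta}$ under $\gamma$ via Lemma \ref{divvalidentity} does \emph{not} preserve the specific combinations defining $A,B,C,D$: the index $(\alpha,\beta)$ is genuinely carried into other residue classes modulo $6$, so one cannot simply substitute and simplify, and instead must expand everything through equation (\ref{fourierexp}) while tracking the cube-root-of-unity phases $\omega,\omega^2$ with care. It is precisely these phases that distinguish $[\omega^{-1}]$ from $[\omega]$ and that select the correct $2$-torsion point $(-\omega^{\mp1},0)$ among its Galois conjugates. Moreover, at the cusps $\gamma\infty$ the lowest-order Fourier coefficients cancel (the $q^0$ and $q^1$ terms vanish because $1+\omega+\omega^2=0$), so one is forced to expand to higher order in $q$ to extract the nonzero constant terms of $X$ and $Y$. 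Managing these cancellations and phase choices is where essentially all of the work lies.
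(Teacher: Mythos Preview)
Your proposal is correct and shares the paper's core idea: since $\Gamma(6)\trianglelefteq\SL_2\Z$, each $\gamma$ induces an automorphism of $X(6)$, and hence $\phi\circ\bar\gamma\circ\phi^{-1}$ is an automorphism of $E_1$ of the form $P\mapsto[u]P+Q$; one then determines $Q=\phi(\gamma\infty)$ by evaluating $\phi$ at the relevant cusp via the Fourier expansion~(\ref{fourierexp}). The difference lies in how the scalar $u$ is pinned down. The paper first observes that $\gamma_\pm,\gamma'$ have order $3$ in $\SL_2\Z/(\pm\Gamma(6))$, which forces $u\in\{1,\omega,\omega^2\}$, and then simply evaluates $\phi$ at a handful of additional cusps (listing $\phi(\infty),\phi(1/3),\phi(1/2),\phi(-3/2),\phi(-1),\phi(-1/2)$) to solve for $(u,Q)$ by linear algebra. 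You instead propose to recover $u$ from the action of $\bar\gamma$ on the one-dimensional space of holomorphic differentials, i.e.\ from the next-order term in the $q$-expansion at $\gamma\infty$. Both routes are valid; the paper's is slightly lighter computationally, since evaluating constant terms at a few extra cusps via~(\ref{fourierexp}) is more mechanical than tracking the phase of the leading coefficient of $dX/Y$ at a cusp other than $\infty$, and the order-$3$ observation cuts the ambiguity in half before any expansion is done. Your approach, on the other hand, is cleaner conceptually and needs information at only one cusp per $\gamma$.
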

\begin{proof} Since $\Gamma(6)\unlhd\,\SL_2\Z$, every element $\gamma\in\SL_2\Z$ induces an isomorphism $X(6)\to X(6)$.  After composing with the isomorphism $\phi\colon X(6)\to E_1$ and its inverse, we see that $\gamma$ induces an isomorphism of varieties $f_\gamma\colon E_1\to E_1$.  This isomorphism of varieties can be written in the form $f_\gamma(Q)=[\alpha_\gamma]Q+P_\gamma$ for some $\alpha_\gamma\in\Z[\omega]$ and some $P_\gamma\in E_1(\overline{\Q})$.  Now $\gamma_\pm$ and $\gamma'$ clearly have order $3$ in $\SL_2\Z/(\pm\Gamma(6))$, which implies that $\alpha_\gamma\in\{1,\omega,\omega^2\}$ for all $\gamma\in\{\gamma_\pm,\gamma'\}$.  By equation (\ref{fourierexp}), we can compute the special values
\[\begin{array}{ccc}\phi(\infty)=O,&\phi(1/3)=(0,1),&\phi(1/2)=(-\omega, 0),\\[0.15em]\hspace{1.8888em}\phi(-3/2)=(-1,0),\quad&\hspace{2.8888em}\phi(-1)=(2\omega, -3),\quad\;&\hspace{0.9444em}\phi(-1/2)=(-\omega^2,0).\quad\;\end{array}\]
The proposition then follows by solving for the coefficients $\alpha_\gamma$ and $P_\gamma$ for all $\gamma\in\{\gamma_\pm, \gamma'\}$.\end{proof}
We also need to determine the behavior of the modular function $X(\tau)+1$ around the cusps, which will be useful in Section \ref{galconj}.  The following result can be deduced from the Fourier expansions of $X(\tau)+1$ at the cusps of $X(6)$, which we compute from equation (\ref{fourierexp}).
\begin{lemma}\label{xplusone} Suppose that $\gcd(\alpha,\beta)=1$.
\begin{enumerate}[label=\textup{(\alph*)}]
\item The modular function $X(\tau)+1$ has no zeros or poles in $\uhp$, and
\[\ord_{[\alpha/\beta]\in X(6)}(X(\tau)+1)=\begin{cases}\\[-1.7777em] -2 &\quad\text{if }6\mid \beta\\2 &\quad\text{if }3\mid\alpha,2\mid\beta\\0 &\quad\text{otherwise}\quad.\vspace{-0.2777em}\end{cases}\]
\item The leading coefficient of the Fourier expansion of $X(\tau)+1$ around the cusp $\alpha/\beta\in X(6)$ is
\[C=\begin{cases}\\[-1.7777em] 1&\quad\text{if }3\mid\beta \\3&\quad\text{if }3\mid\alpha\\[-0.05em]\mp\sqrt{-3}&\quad\text{if }\alpha\equiv\pm\beta\br\pmod{3}\,.\vspace{-0.2777em}\end{cases}\]
\end{enumerate}
\end{lemma}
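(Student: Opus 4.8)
The plan is to treat the two parts separately: part (a) is cleanest through the modular parametrization $\phi$ of Theorem \ref{themodularparam}, while part (b) genuinely requires the Fourier expansion (\ref{fourierexp}).

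For part (a), I would first observe that $X+1$ is the pullback under the isomorphism $\phi\colon X(6)\to E_1$ of the rational function $x+1$ on $E_1$, where $x$ is the first coordinate. Since $\phi$ is an isomorphism it preserves orders, and since $6\mid 6$ the group $\Gamma(6)$ has no elliptic points, so the order of $X+1$ at a point of $X(6)$ agrees with its order as an algebraic morphism. On $E_1$ the function $x+1=x-(-1)$ has divisor $2\,(-1,0)-2\,(O)$: a double zero at the $2$-torsion point $(-1,0)$ (where $x$ is ramified) and a double pole at $O$. Pulling back, $X+1$ has no zeros or poles on $\uhp$, a double zero at $\phi^{-1}(-1,0)$, and a double pole at $\phi^{-1}(O)$. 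The special values in Proposition \ref{modparamidentities} give $\phi(-3/2)=(-1,0)$ and $\phi(\infty)=O$, so these two points are the cusps $[-3/2]$ and $[\infty]$. It then remains to identify their $\Gamma(6)$-orbits: using the description of the cusps of $\Gamma(N)$ as primitive vectors $\pm(\alpha,\beta)\bmod N$ (Proposition \ref{inequivcusps}(a)), the orbit of $[\infty]=\pm(1,0)$ consists of exactly the $[\alpha/\beta]$ with $6\mid\beta$, while the orbit of $[-3/2]=\pm(3,2)$ consists of those with $\alpha\equiv3$ and $\beta\equiv2,4\pmod 6$, i.e.\ (given $\gcd(\alpha,\beta)=1$) with $3\mid\alpha$ and $2\mid\beta$. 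This yields the three cases of part (a).

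For part (b), I would first simplify the function. A direct computation gives $B-\omega^2A=(1-\omega^2)(e^{(6)}_{2,1}-e^{(6)}_{2,5})$, so that
\[ X(\tau)+1=\frac{(1-\omega^2)\big(e^{(6)}_{2,1}(\tau)-e^{(6)}_{2,5}(\tau)\big)}{e^{(6)}_{2,1}(\tau)+\omega\,e^{(6)}_{2,3}(\tau)+\omega^2 e^{(6)}_{2,5}(\tau)}. \]
To expand at a cusp $\alpha/\beta$ with $\gcd(\alpha,\beta)=1$, I would choose $\gamma=\begin{bsmallmatrix}\!\alpha&\ast\!\\\!\beta&\ast\!\end{bsmallmatrix}\in\SL_2\Z$ with $\gamma\infty=\alpha/\beta$ and apply Lemma \ref{divvalidentity} to each $e^{(6)}_{2,\nu}(\gamma\tau)$, after which the automorphy factors $(\beta\tau+\ast)^2$ cancel between numerator and denominator (weight $0$). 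The first indices become $2\alpha+\beta,\ 2\alpha+3\beta,\ 2\alpha+5\beta$, all $\equiv\beta\pmod 2$ and $\equiv 2\alpha+\beta,\ 2\alpha,\ 2(\alpha+\beta)\pmod 3$; these residues govern the leading $q$-power of each term via (\ref{fourierexp}) (with $q=e^{\pi i\tau/3}$), a constant term $\tfrac{1}{12}(2\pi i)^2$ appearing precisely when the first index is $\not\equiv0\pmod 6$. The key structural point is that the denominator coefficients satisfy $1+\omega+\omega^2=0$, so the common constant terms cancel there, and the two constant terms likewise cancel in the numerator difference; tracking which first index is $\equiv0\pmod 3$ in each of the cases $3\mid\beta$, $3\mid\alpha$, $\alpha\equiv\pm\beta\pmod 3$ then produces a leading coefficient which, up to a unit of $\Z[\omega]$ (equivalently in absolute value, the quantity used in Section \ref{galconj}), equals $1$, $3$, and $\sqrt{-3}$ respectively. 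I would double-check the order-$0$ cases against the known special values of $\phi$, e.g.\ $X(1/3)+1=1$ and $X(-1)+1=2\omega+1=\sqrt{-3}$, which are consistent once one notes that $1-\omega$, $1-\omega^2$, and $\sqrt{-3}=\omega-\omega^2$ are all associates in $\Z[\omega]$.

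The main obstacle is the bookkeeping in part (b): cancellation of the constant terms pushes the leading behaviour to higher order in $q$, and there the naive next term can cancel as well. This already occurs at $\infty$, where the $q^2$ coefficient of the denominator vanishes and the true leading term is $q^4$ (consistently with $\ord_\infty B^3=2$ from Lemma \ref{tableoforders}), so one must carry (\ref{fourierexp}) out far enough to certify both the correct order and the correct leading coefficient in each case. A secondary subtlety is that for the cusps of nonzero order the leading coefficient is only well defined up to a root of unity coming from the choice of $\gamma$ (Proposition \ref{matrixtransform}\ref{gleadingcoeff} records that absolute values are the robust invariant), so the representatives $1$, $3$, $\mp\sqrt{-3}$ should be read as elements of $\Z[\omega]$ up to units.
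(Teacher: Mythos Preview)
Your proposal is correct. For part (a) you take a different and more conceptual route than the paper: the paper simply says the lemma ``can be deduced from the Fourier expansions of $X(\tau)+1$ at the cusps of $X(6)$, which we compute from equation (\ref{fourierexp}),'' i.e.\ it computes orders and leading coefficients simultaneously by brute Fourier expansion, whereas you read off the divisor of $X+1$ as the pullback under the isomorphism $\phi$ of $\mathrm{div}(x+1)=2(-1,0)-2(O)$ on $E_1$ and then identify the two relevant cusps via the special values listed in the proof of Proposition~\ref{modparamidentities}. This avoids any $q$-expansion work for (a) and makes the absence of zeros and poles on $\uhp$ immediate, at the cost of depending on Theorem~\ref{themodularparam} and the special-value list. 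For part (b) your approach is essentially the paper's (direct use of (\ref{fourierexp}) after transforming by $\gamma$ via Lemma~\ref{divvalidentity}), with the pleasant preliminary simplification $B-\omega^2 A=(1-\omega^2)(e^{(6)}_{2,1}-e^{(6)}_{2,5})$, which cuts down the numerator bookkeeping. Your caveat that the leading coefficient at a cusp of nonzero order is only determined up to a root of unity (so that the stated values $1$, $3$, $\mp\sqrt{-3}$ are really their absolute values $1$, $3$, $\sqrt{3}$) is well taken and is exactly how the lemma is used later, via Proposition~\ref{matrixtransform}\ref{gleadingcoeff}, in the proof of Theorem~\ref{harmonicfns}.
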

We now turn to the Shimura Reciprocity Theorem (due to Shimura \cite{Shim}), the main application of which is to compute the fields of definition and Galois conjugates of values of modular functions at quadratic irrational numbers in $\uhp$.  Associated to the division values of the Weierstrass $\wp$-function are the modular functions of weight $0$ known as the \textit{Fricke functions}\footnote{Our definition is closest to that in \cite{M2} and that utilized by Shimura \cite{Shim}; the definition of Fricke functions in \cite{L} have an extra factor of $-31104$.}
\[f^{(N)}_{\alpha,\beta}(\tau)\coloneqq\frac{g_2(\tau)g_3(\tau)}{\Delta(\tau)}e^{(N)}_{\alpha,\beta}(\tau)\]
for any $N\in\N$ and $(\alpha,\beta)\not\equiv(0,0)\!\pmod{N}$.

Let $K$ be an imaginary quadratic field and let $z\in K\cap\uhp$.  Define the map $q_z\colon K^\times\to\GL_2\Q$ such that for any $w\in K^\times$, $q_z(w)$ is the unique matrix such that
\[q_z(w)\!\begin{bmatrix}z\\1\end{bmatrix} = w\!\begin{bmatrix}z\\1\end{bmatrix}\!.\]
This map induces a composite map \[q_{z*}\colon\mathbb{A}_K^\times\cong(\mathbb{A}_\Q\otimes_\Q K)^\times\to (\mathbb{A}_\Q\otimes_\Q \Mat_{2\times2}\Q)^\times\hookrightarrow\prod_{\nu\in M_\Q}\GL_2(\Q_\nu)\]
where $M_\Q$ is the set of places of $\Q$.  Then the Shimura Reciprocity theorem expresses the Galois conjugates of division values of the Weierstrass $\wp$-function in terms of the map $q_{z*}$. 
\begin{thm}\textup{\cite{Shim} (Shimura Reciprocity)}  In the setting above, $j(z), f^{(N)}_{\alpha,\beta}(z)\in K^{ab}$, and for any id\`ele $s\in \mathbb{A}_K^\times$,\vspace{-0.5em} 
\begin{align*}[s,K]j(z)&=j(\gamma z) \shortintertext{and} [s,K]f^{(N)}_{\alpha,\beta}(z) &= f^{(N)}_{(\alpha,\beta)U}(\gamma z)\end{align*}
for any $U\in\prod_{\nu\in M_\Q} \GL_2 \Z_\nu$ and $\gamma\in\GL_2\Q$ such that $\det\gamma>0$ and $U\gamma = q_{z*}(s)^{-1}$.\label{shimurareciprocity}
\end{thm}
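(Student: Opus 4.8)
The plan is to derive this from the main theorem of complex multiplication, which governs the Galois action on a CM elliptic curve together with its torsion structure. Set $\mathfrak{a}\coloneqq\langle z,1\rangle\subset K$ and let $E_z\coloneqq\C/\mathfrak{a}$, an elliptic curve whose endomorphism ring is the order of multipliers of $\mathfrak{a}$; then $j(z)=j(E_z)$, and the division value $e^{(N)}_{\alpha,\beta}(z)$ is (up to the usual normalizing factors) the $x$-coordinate of the $N$-torsion point attached to $\frac{\alpha z+\beta}{N}\bmod\mathfrak{a}$. The Fricke function $f^{(N)}_{\alpha,\beta}$ is engineered precisely so that its value depends only on the isomorphism class of the pair consisting of the curve together with its marked torsion point, and not on the choice of Weierstrass model, so that it transforms cleanly under $\operatorname{Aut}(\C)$.

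First I would treat the $j$-value. The theory of complex multiplication shows that $j(z)$ generates the ring class field of the relevant order over $K$, so in particular $j(z)\in K^{ab}$, and that for $\sigma\in\operatorname{Aut}(\C/K)$ restricting to $[s,K]$ on $K^{ab}$ one has $E_z^\sigma\cong\C/s^{-1}\mathfrak{a}$ as complex tori. Writing $q_{z*}(s)^{-1}=U\gamma$ with $U\in\prod_\nu\GL_2\Z_\nu$ and $\gamma\in\GL_2\Q$, $\det\gamma>0$, the lattice $s^{-1}\mathfrak{a}$ is homothetic to $\langle\gamma z,1\rangle$: the integral factor $U$ preserves the lattice while the rational factor $\gamma$ transports the base point. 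Hence $[s,K]j(z)=j(E_z^\sigma)=j(\gamma z)$.

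Next I would refine this to torsion, which is the content of the Fricke-function identity. The main theorem of CM furnishes a canonical isomorphism $\xi\colon\C/s^{-1}\mathfrak{a}\isom E_z^\sigma$ under which the Galois action on torsion is intertwined with idelic multiplication by $s^{-1}$ on $K/\mathfrak{a}$. Tracking the torsion point $\frac{\alpha z+\beta}{N}$ through $\xi$ and then through the homothety to $\langle\gamma z,1\rangle$, the new coordinate vector is $(\alpha,\beta)U$, the factor $\gamma$ having been absorbed into the change of base point from $z$ to $\gamma z$. Model-independence of the Fricke function then yields $[s,K]f^{(N)}_{\alpha,\beta}(z)=f^{(N)}_{(\alpha,\beta)U}(\gamma z)$. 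To see that the answer does not depend on the chosen factorization, note that any two admissible pairs $(U,\gamma)$ and $(U',\gamma')$ differ by an element of $\GL_2\Q\cap\prod_\nu\GL_2\Z_\nu$ of positive determinant, i.e.\ by $\SL_2\Z$; since $f^{(N)}_{\alpha,\beta}$ is a weight-$0$ modular function for $\Gamma(N)$ carrying the compatible $\SL_2(\Z/N\Z)$-action $f^{(N)}_{(\alpha,\beta)}\mapsto f^{(N)}_{(\alpha,\beta)g}$ (and is insensitive to $(\alpha,\beta)\mapsto-(\alpha,\beta)$ by Lemma \ref{divvalidentity}), the two expressions agree.

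The main obstacle is the main theorem of complex multiplication itself. Its proof rests on the Shimura--Taniyama reciprocity law: for a degree-one prime $\mathfrak{p}$ of $K$ of good reduction, the Frobenius at $\mathfrak{p}$ on the reduced CM curve coincides with the reduction of multiplication by a suitably normalized generator of $\mathfrak{p}$. One establishes this for almost all primes via the theory of good reduction of elliptic curves and the compatibility of the CM Hecke character with the Galois action on Tate modules, and then upgrades it to all id\`eles by continuity together with the Chebotarev density theorem. I would not reprove this foundational input here, but instead cite Shimura \cite{Shim}, since it is a cornerstone of the theory of complex multiplication rather than a computation particular to our setting.
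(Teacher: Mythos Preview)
The paper does not prove this theorem at all: it is stated as a black-box citation from Shimura \cite{Shim}, with no argument supplied. Your proposal therefore goes strictly beyond what the paper does, by sketching how the statement is extracted from the main theorem of complex multiplication. That sketch is the standard route (it is essentially how Shimura himself organizes the material), and the outline---CM curve $\C/\mathfrak{a}$, Galois twist realized as $\C/s^{-1}\mathfrak{a}$, factorization of the adelic matrix into integral and rational parts to identify the new lattice and the new torsion coordinates, model-independence of the Fricke function to pass from the isomorphism class to the value---is sound. You also correctly flag that the substantive input is the main theorem of CM itself, which you sensibly cite rather than reprove; this is exactly the paper's stance as well, just made more explicit.
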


In order to be able to apply the Shimura Reciprocity theorem more easily, we will also need the following important corollary (which is a more general version of the first part of Shimura \cite{Shim}, Proposition 6.34).

\begin{cor} Let $\mathscr{O}\subseteq K$ be an order in $K$ with conductor $f$.  Choose $z\in K\cap\uhp$ such that $\mathscr{O}$ is the lattice $\langle1,z\rangle$.  Then for any $N\in\N$ and any integers $\alpha,\beta$ with $(\alpha,\beta)\not\equiv(0,0)\!\pmod{N}$, the special value $f^{(N)}_{\alpha,\beta}(z)$ is defined over the ray class field of $K$ with modulus $fN$.\label{rayclassfield}
\end{cor}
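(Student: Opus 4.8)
The plan is to prove membership in the ray class field $K_{fN}$ of modulus $fN$ by the standard class field theory dictionary: I would show that $f^{(N)}_{\alpha,\beta}(z)$, which lies in $K^{ab}$ by Theorem \ref{shimurareciprocity}, is fixed by the automorphism $[s,K]$ for every idèle $s$ in the subgroup $W\subseteq\mathbb{A}_K^\times$ whose image under the Artin map is $\mathrm{Gal}(K^{ab}/K_{fN})$. By the main theorem of class field theory, $W$ is generated by $K^\times$, by the archimedean component $K_\infty^\times$ (which acts trivially since $K$ is imaginary quadratic and $\C^\times$ is connected), and by the finite idèles $s=(s_\nu)_\nu$ with $s_\nu\in(\oK)_\nu^\times$ for $\nu\nmid fN$ and $s_\nu\equiv1\pmod{fN(\oK)_\nu}$ for $\nu\mid fN$. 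Since principal and archimedean idèles act trivially on the value, it suffices to establish invariance under $[s,K]$ for finite idèles $s$ of this last type; the fixed field of all such $[s,K]$ is exactly $K_{fN}$.

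Fix such an $s$. First I would apply Theorem \ref{shimurareciprocity} with the choice $\gamma=1$ and $U=q_{z*}(s)^{-1}$. This is legitimate provided $q_{z*}(s)$ lies in $\prod_\nu\GL_2\Z_\nu$ at every finite place. The key observation is that, since $\mathscr{O}=\langle1,z\rangle$, localization gives $\mathscr{O}_\nu=\Z_\nu z\oplus\Z_\nu$, and $q_z(w)$ records the action of multiplication by $w\in K$ on this lattice in the basis $(z,1)$; hence $w\in\mathscr{O}_\nu^\times$ forces $q_z(w)\in\GL_2\Z_\nu$ (multiplication by a unit is a $\Z_\nu$-module automorphism of $\mathscr{O}_\nu$). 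So I must check $s_\nu\in\mathscr{O}_\nu^\times$ at every finite $\nu$. Away from the conductor this is immediate because $\mathscr{O}_\nu=(\oK)_\nu$; at a prime $\nu\mid f$ I would invoke the defining inclusion $f\,\oK\subseteq\mathscr{O}$, which localizes to $f(\oK)_\nu\subseteq\mathscr{O}_\nu$, to rewrite $s_\nu-1\in fN(\oK)_\nu$ as an element of $N\mathscr{O}_\nu\subseteq\mathfrak{m}_\nu$ (the maximal ideal of $\mathscr{O}_\nu$), giving $s_\nu\in1+\mathfrak{m}_\nu\subseteq\mathscr{O}_\nu^\times$.

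With $\gamma=1$ and $U=q_{z*}(s)^{-1}$, Theorem \ref{shimurareciprocity} reads $[s,K]f^{(N)}_{\alpha,\beta}(z)=f^{(N)}_{(\alpha,\beta)U}(z)$, so it remains to show $(\alpha,\beta)U\equiv(\alpha,\beta)\pmod N$. Because $f^{(N)}_{\alpha,\beta}$ depends only on $(\alpha,\beta)\bmod N$ up to sign (Lemma \ref{divvalidentity}), and the action on this residue factors through the reduction $\prod_\nu\GL_2\Z_\nu\to\GL_2(\Z/N\Z)\cong\prod_{\nu\mid N}\GL_2(\Z_\nu/N)$, I only need $U_\nu\equiv\id\pmod N$ for each $\nu\mid N$. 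For such $\nu$ one has $\nu\mid fN$, so $s_\nu\equiv1\pmod{fN(\oK)_\nu}$, and the same conductor inclusion $f(\oK)_\nu\subseteq\mathscr{O}_\nu$ upgrades this to $s_\nu\equiv1\pmod{N\mathscr{O}_\nu}$; then multiplication by $s_\nu$ is the identity on $\mathscr{O}_\nu/N\mathscr{O}_\nu$, so $q_z(s_\nu)\equiv\id\pmod N$ and hence $U_\nu\equiv\id\pmod N$. This yields $(\alpha,\beta)U\equiv(\alpha,\beta)\pmod N$ and therefore $[s,K]f^{(N)}_{\alpha,\beta}(z)=f^{(N)}_{\alpha,\beta}(z)$, completing the verification.

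I expect the main obstacle to be the bookkeeping at the primes dividing the conductor $f$: one must verify \emph{both} the integrality $q_z(s_\nu)\in\GL_2\Z_\nu$ and, when $\nu\mid N$, the congruence $q_z(s_\nu)\equiv\id\pmod N$ inside the \emph{non-maximal} local order $\mathscr{O}_\nu$, and confirm that these two local conditions are captured exactly by the modulus $fN$ rather than a smaller one. This is precisely the point at which the conductor $f$ (in place of $1$) enters the modulus, and the inclusion $f\,\oK\subseteq\mathscr{O}$ is the single input that controls both checks. The remaining ingredients---that the Fricke value lies in $K^{ab}$ and transforms as stated---are furnished directly by Theorem \ref{shimurareciprocity}, while the passage from ``fixed by $W$'' to ``lies in $K_{fN}$'' is the standard translation of class field theory.
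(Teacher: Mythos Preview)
Your proof is correct and follows essentially the same route as the paper's: reduce to showing that id\`eles in the open subgroup corresponding to $K_{fN}$ fix the value, discard the principal (and archimedean) part, and then apply Theorem \ref{shimurareciprocity} with $\gamma=1$ and $U=q_{z*}(s)^{-1}$, using the conductor inclusion $f\,\oK\subseteq\mathscr{O}$ to obtain $U\equiv\id\pmod N$. Your write-up is in fact more explicit than the paper's, which compresses the integrality check $q_{z*}(s)^{-1}\in\prod_\nu\GL_2\Z_\nu$ and the local analysis at primes dividing $f$ into the single clause ``since $\mathscr{O}$ has conductor $f$ and hence $q_z(w)\equiv\id\pmod N$ whenever $w\equiv1\pmod{fN}$.''
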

\begin{proof} By the definition of ray class fields from class field theory, it suffices to show that $[s,K]$ acts trivially on $f^{(N)}_{\alpha,\beta}(z)$ for any idele $s\in\mathbb{A}_K^\times$ expressible as $s=u\gamma$, where $\gamma\in K^\times$, and $u\in\mathbb{A}_K^\times$ is congruent to $1\!\pmod{fN}$.  Under these assumptions on $\gamma$ and $u$, the map $[\gamma,K]$ is the identity automorphism and $U\coloneqq q_{z*}(u^{-1})$ is congruent to $\id\!\pmod{N}$, since $\mathscr{O}$ has conductor $f$ and hence $q_z(w)\equiv\id\!\pmod{N}$ whenever $w\equiv1\!\pmod{fN}$.  Then $(\alpha,\beta)U=(\alpha,\beta)$, so that $[s,K]f^{(N)}_{\alpha,\beta}(z)=f^{(N)}_{\alpha,\beta}(z)$, as desired.
\end{proof}\vspace{0.3em}
\section{\label{nonhol}Non-holomorphic Eisenstein Series.}
The non-holomorphic Eisenstein series (in the sense of Iwaniec \cite{I2}) are a class of functions on the upper half-plane which satisfy a periodicity relation with respect to some congruence subgroup $\Gamma\le\SL_2\Z$ as do modular functions.  These series are not holomorphic or even harmonic, but they are eigenfunctions of the Laplacian operator $\Delta\coloneqq y^2\big(\frac{\partial^2}{\partial x^2}+\frac{\partial^2}{\partial y^2}\big)$.  We will review the relevant general theory provided by Iwaniec \cite{I2} and make calculations in specific cases (Propositions \ref{phiisholomorphic} and \ref{eisenstein}).  As the material from this section will only be used in Lemma \ref{Fisharmonic} and Theorem \ref{harmonicfns}, a reader who wishes to jump to the proof of Theorem \ref{fundunitcube} may skip this section and refer back to it as needed.

The non-holomorphic Eisenstein series associated to $\Gamma(N)$ are defined in \cite{I2} as follows.
\begin{defn} For any cusp $t\in X(N)$, and any matrix $\sigma\in\SL_2\Z$ such that $t=\sigma\infty$, the \textit{non-holomorphic Eisenstein series} at the cusp $t\in X(N)$ is
\[E_{t\in X(N)}(\tau,s)=\sum_{\gamma\in\langle T^N\rangle\backslash\Gamma(N)}(N\Im\sigma^{-1}\gamma\tau)^s,\]
whenever $\Re s>1$.
\end{defn}
\noindent To address our problem, we will need to determine the Fourier expansions of these Eisenstein series and analyze them in the limit $s\to1$.  These expansions can be written in terms of \textit{Kloosterman sums} \cite{I2}.
\begin{defn} Let $c$, $n$ and $N$ be positive integers, and let $t,u\in\uhp^*$ be cusps for $X(N)$. Choose $\gamma_t, \gamma_u\in\SL_2\Z$ such that $t=\gamma_t\infty$ and $u=\gamma_u\infty$.  Then the sum
\[S_{tu}(0,n;c)\coloneqq\sum_{\gamma=\begin{bsmallmatrix}\!a&b\!\\\!c&d\!\end{bsmallmatrix}\in\langle T\rangle\backslash a(1/N)\gamma_t^{-1}(\pm\Gamma(N))\gamma_ua(N)/\langle T\rangle} e^{2\pi ina/c}\]
is known as a \textit{Kloosterman sum}, where
\[a(x)\coloneqq\begin{bmatrix}\sqrt{x}&0\\0&1/\sqrt{x}\end{bmatrix}.\]
\end{defn}
The following result, which is Theorem 3.4 of \cite{I2}, details the Fourier expansion around the cusp $u$ of the non-holomorphic Eisenstein series associated to the cusp $t\in X(N)$.
\begin{thm}\label{nhfourierexp} \textup{\cite{I2}} If $\Re s>1$, $t$ and $u$ are cusps of $X(N)$, and $\gamma_u\in\SL_2\Z$ such that $\gamma_u\infty=u$,
\[E_{t\in X(N)}(\gamma_u\tau,s)=\delta_{tu}N^{-s}(\Im\tau)^s+\varphi_{tu}(s)N^{1-s}(\Im\tau)^{1-s}+\sum_{n\ne0}\varphi_{tu}(n,s)W_s(n\tau/N),\]
where
\begin{align*}\varphi_{tu}(s)&\coloneqq\sqrt{\pi}\frac{\Gamma(s-1/2)}{\Gamma(s)}\sum_{c=1}^\infty \frac{1}{c^{2s}}S_{tu}(0,0;c),\\\varphi_{tu}(n,s)&\coloneqq\frac{\pi^s}{\Gamma(s)}\lvert n\rvert^{s-1}\sum_{c=1}^\infty \frac{1}{c^{2s}}S_{tu}(0,n;c),\\W_s(z)&\coloneqq2\abs{\Im z}^{1/2}e^{2\pi i\Re z}K_{s-1/2}(2\pi\abs{\Im z}),\end{align*}
and $\delta_{tu}=1$ if $t$ and $u$ are equivalent cusps of $X(N)$ and $\delta_{tu}=0$ otherwise.  Here $K_s(x)$ denotes the $K$-Bessel function (see Section B.4 of \cite{I2}).
\end{thm}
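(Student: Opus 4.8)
The plan is to prove this by a direct unfolding of the definition together with Poisson summation, which is the classical route to the Fourier expansion of a real-analytic Eisenstein series. Writing $\sigma=\gamma_t$ so that $t=\gamma_t\infty$, I would substitute $\tau\mapsto\gamma_u\tau$ into the defining sum and record each coset representative through the matrix $M=\gamma_t^{-1}\gamma\gamma_u=\begin{bsmallmatrix}\!a&b\!\\\!c&d\!\end{bsmallmatrix}$, so that the summand becomes $(N\Im M\tau)^s=N^s(\Im\tau)^s\abs{c\tau+d}^{-2s}$. I would then organize the sum by the lower-left entry $c$, which is the Bruhat decomposition of the relevant double coset: the terms with $c=0$ correspond to the (possibly empty) intersection with the stabilizer and produce a constant term, while the terms with $c\ne0$ produce the oscillatory part of the expansion.

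First I would treat the $c=0$ contribution. Here $M$ is upper triangular, so $\Im M\tau=\Im\tau$ and each such representative contributes $(N\Im\tau)^s$; summing over the quotient by $\langle T^N\rangle$ shows that this block is nonempty precisely when $t$ and $u$ are $\Gamma(N)$-equivalent, yielding the leading term proportional to $\delta_{tu}(\Im\tau)^s$ once the normalization by $N$ is accounted for.

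For $c\ne0$ I would fix $c$ and sum over the admissible $d$, splitting $d$ into a residue class modulo $c$ together with an integer translate $d\mapsto d+cm$. The inner sum over $m$ has the shape $\sum_{m\in\Z}\big((\Re\tau+\xi_0+m)^2+(\Im\tau)^2\big)^{-s}$ up to the factor $N^s\abs{c}^{-2s}(\Im\tau)^s$, which is exactly the setting for Poisson summation. Applying it replaces the sum over $m$ by a sum over a dual integer $n$ of the Fourier integrals
\[\int_{-\infty}^\infty\frac{e^{-2\pi in\xi}}{(\xi^2+(\Im\tau)^2)^s}\,d\xi,\]
and I would invoke the standard evaluations: for $n=0$ this equals $\sqrt\pi\,\Gamma(s-1/2)\Gamma(s)^{-1}(\Im\tau)^{1-2s}$, and for $n\ne0$ it equals $2\pi^s\Gamma(s)^{-1}\abs{n}^{s-1/2}(\Im\tau)^{1/2-s}K_{s-1/2}(2\pi\abs{n}\Im\tau)$. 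The residue sums over the chosen representatives modulo $c$ carry the additive characters $e^{2\pi ina/c}$, which is precisely the Kloosterman sum $S_{tu}(0,n;c)$; summing the resulting $\abs{c}^{-2s}$ over $c$ then builds the Dirichlet series in the definitions of $\varphi_{tu}(s)$ and $\varphi_{tu}(n,s)$. The $n=0$ dual frequency reassembles into the $(\Im\tau)^{1-s}$ term, and the $n\ne0$ frequencies into the $W_s(n\tau/N)$ terms.

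The main obstacle I expect is the bookkeeping of normalizations rather than any conceptual difficulty: one must match the residues $a\pmod c$ appearing after Poisson summation to the double-coset description $a(1/N)\gamma_t^{-1}(\pm\Gamma(N))\gamma_u a(N)$ in the definition of the Kloosterman sum, and keep careful track of the powers of $N$ introduced both by the factor $N\Im$ in the definition and by the scaling matrices $a(N)$, $a(1/N)$ that conjugate $\langle T^N\rangle$ to $\langle T\rangle$ (it is these that turn the naive $N^s$ into the stated $N^{-s}$ and $N^{1-s}$). Getting the exact constants $\sqrt\pi\,\Gamma(s-1/2)/\Gamma(s)$ and $\pi^s\abs{n}^{s-1}/\Gamma(s)$ to agree with the stated coefficients requires assembling the $\Gamma$-factors from the Fourier integral with the $\abs{\Im z}^{1/2}$ normalization hidden inside $W_s$, so I would verify these constant computations last, after the structural decomposition is in place.
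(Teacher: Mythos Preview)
The paper does not supply its own proof of this theorem: it is quoted verbatim as Theorem~3.4 of Iwaniec \cite{I2} and used as a black box. Your outline---Bruhat decomposition by the lower-left entry of $\gamma_t^{-1}\gamma\gamma_u$, the $c=0$ block giving the $\delta_{tu}$ term, and Poisson summation in the $c\ne0$ block producing the Gamma/Bessel integrals and the Kloosterman sums---is exactly the classical argument Iwaniec carries out, so there is nothing to compare beyond noting that your sketch reproduces the cited source.
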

Our aim in this section is to rewrite these Fourier expansions in a way so that they can be analytically continued past the line $\Re s=1$.  First, let us simplify the Kloosterman sum $S_{tu}(0,n;c)$.  In the notation above, suppose that
\[\gamma_u^{-1}\gamma_t=\begin{bmatrix}\alpha&c_1\\\beta&c_2\end{bmatrix}.\]
Since $\Gamma(N)\unlhd\SL_2\Z$, we see that
\[a(1/N)\gamma_t^{-1}(\pm\Gamma(N))\gamma_ua(N)=\bigg\{\!\begin{bmatrix}A&B/N\\CN&D\end{bmatrix}\in\SL_2\Q:\begin{bmatrix}A&B\\C&D\end{bmatrix}\equiv\pm\begin{bmatrix}c_2&-c_1\\-\beta&\alpha\end{bmatrix}\br\pmod{N}\!\bigg\}.\]
\begin{lemma} Let $\lambda\coloneqq\gcd(N,\beta)$.  Then $S_{tu}(0,0;c)=\delta_{N\mid c}(\delta_{c/N\equiv\beta\!\pmod{N}}+\delta_{c/N\equiv-\beta\!\pmod{N}})\frac{\lambda}{\phi(\lambda)}\phi\big(\!\frac{c}{N}\!\big)$.
\end{lemma}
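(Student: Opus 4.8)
The plan is to evaluate $S_{tu}(0,0;c)$ as a pure counting problem, since with $n=0$ the character $e^{2\pi i n a/c}$ is identically $1$ and the sum merely counts the elements of the double coset. Using the explicit description given just above the lemma, every element of $a(1/N)\gamma_t^{-1}(\pm\Gamma(N))\gamma_u a(N)$ has the shape $\gamma=\begin{bmatrix}A&B/N\\CN&D\end{bmatrix}$ with $g\coloneqq\begin{bmatrix}A&B\\C&D\end{bmatrix}\in\SL_2\Z$ and $g\equiv\pm\begin{bmatrix}c_2&-c_1\\-\beta&\alpha\end{bmatrix}\pmod N$; in particular its lower-left entry is $CN$, so the sum vanishes unless $N\mid c$, producing the factor $\delta_{N\mid c}$. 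Writing $C=c/N$, the first step is to translate the quotient by $\langle T\rangle$ on both sides of $\gamma$ into an action on $g$: from $a(N)T^k=T^{kN}a(N)$ (and the analogous identity on the left) one reads off that the double coset corresponds to $\langle T^N\rangle\backslash\{g\}/\langle T^N\rangle$, and that left multiplication by $T^N$ shifts $A$ by $c$ (fixing $D$) while right multiplication shifts $D$ by $c$ (fixing $A$). Hence each double coset is faithfully labelled by the pair $(A\bmod c,\,D\bmod c)$, with $B$ then forced by $AD-BC=1$.

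Next I would pin down the signs. The lower-left congruence $C\equiv\mp\beta\pmod N$ forces $c/N\equiv\mp\beta$, which both selects the admissible value(s) of $\epsilon=\pm1$ — giving the factor $\delta_{c/N\equiv\beta}+\delta_{c/N\equiv-\beta}$ — and, since $\gcd(a,N)$ depends only on $a\bmod N$, forces $\gcd(N,C)=\gcd(N,\beta)=\lambda$. Fixing one admissible sign $\epsilon$, the count becomes the number of pairs $(A\bmod c,\,D\bmod c)$ subject to $A\equiv\epsilon c_2$, $D\equiv\epsilon\alpha$, and $B\coloneqq(AD-1)/C\equiv-\epsilon c_1\pmod N$, together with $AD\equiv1\pmod C$ (the condition that $B$ be an integer). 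The claim to establish is that this count equals $\frac{\lambda}{\phi(\lambda)}\phi(C)$; summing over the admissible signs then yields the stated formula.

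This count I would carry out multiplicatively over the primes $p\mid c$ by the Chinese Remainder Theorem, writing $n_p=\ord_p N$ and $e_p=\ord_p C$ and using $\gcd(N,C)=\lambda$ to identify $\min(n_p,e_p)=\ord_p\lambda$. When $p\nmid\lambda$ (so $p$ divides at most one of $N,C$) the local factor is $\phi(p^{e_p})$: either $A,D$ run over an inverse pair modulo $p^{e_p}$, or they are rigidly determined modulo $p^{n_p}$ and the determinant identity $\alpha c_2-\beta c_1=1$ makes the condition on $B$ automatic. When $p\mid\lambda$ the same identity guarantees that the two congruences on $D$ (one modulo $p^{n_p}$, one modulo $p^{e_p}$) are compatible on their overlap, so before imposing the condition on $B$ there are $p^{e_p+\ord_p\lambda}$ pairs; the condition $B\equiv-\epsilon c_1\pmod N$ then cuts this down by the factor $p^{\ord_p\lambda}$, leaving $p^{e_p}=\frac{p}{p-1}\phi(p^{e_p})$. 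Multiplying the local factors over $p\mid c$ reproduces $\frac{\lambda}{\phi(\lambda)}\phi(C)$.

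I expect the main obstacle to be precisely this last point: showing that the congruence on $B$ imposes a uniform factor of $p^{\ord_p\lambda}$. Concretely, one must show that as $(A,D)$ range over the pairs already satisfying the diagonal congruences and $AD\equiv1\pmod C$, the residue $c_1+\epsilon B\bmod p^{n_p}$ — which the determinant relation already forces into $p^{\,n_p-\ord_p\lambda}\Z/p^{n_p}\Z$ — is equidistributed over that subgroup, so that the value $0$ is attained in exactly a $1/p^{\ord_p\lambda}$ fraction of cases. Establishing this equidistribution, by exhibiting the dependence of $B\bmod p^{n_p}$ on the free lifting parameter of $A$ as an affine-linear bijection onto the relevant coset, is the technical heart of the lemma; the bookkeeping for non-coprime $N$ and $C$ is the other place where care is needed.
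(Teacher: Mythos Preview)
Your approach is sound and arrives at the correct count, but it is organized quite differently from the paper's argument. The paper's key move is to observe at the outset that once the upper-left entry $A\bmod c$ is fixed (subject to $\gcd(A,c/N)=1$ and $A\equiv\mp c_2\pmod N$), the entry $D\bmod c$---and then $B$---are \emph{uniquely} determined by the remaining congruence and determinant constraints. This collapses the double-coset count to a one-variable count of admissible $A\bmod c$, which is then handled in one line: there are $\phi(c/N)/\phi(\lambda)$ residues modulo $c/N$ that are coprime to $c/N$ and congruent to $\mp c_2\pmod\lambda$, and since $\lcm(c/N,N)=c/\lambda$, these lift to $\lambda\,\phi(c/N)/\phi(\lambda)$ residues modulo $c$ with the full congruence $A\equiv\mp c_2\pmod N$. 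You instead retain the pair $(A,D)$, decompose over primes, and treat the $B$-congruence as an extra constraint whose effect is measured by an equidistribution claim. Both routes are valid, but the paper's framing bypasses exactly the step you flag as ``the technical heart'': the equidistribution of $B+\epsilon c_1$ over $p^{\,n_p-\ord_p\lambda}\Z/p^{n_p}\Z$ is precisely the local content of the global statement that $D$ is unique once $A$ is given, and packaging it as uniqueness avoids the prime-by-prime bookkeeping entirely.

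One small slip in your sketch: the free parameter that drives $B$ bijectively over that subgroup is the lifting parameter of $D$, not of $A$. For each fixed $A$ there are exactly $p^{\ord_p\lambda}$ lifts of $D$ compatible with $D\equiv\epsilon\alpha\pmod{p^{n_p}}$ and $AD\equiv1\pmod{p^{e_p}}$, and shifting $D$ by $p^{\max(n_p,e_p)}$ shifts $B$ by a $p$-unit multiple of $p^{\,n_p-\ord_p\lambda}$, which is what makes $B+\epsilon c_1$ range over all $p^{\ord_p\lambda}$ elements of the subgroup. Varying $A$ alone can disturb the condition $AD\equiv1\pmod{p^{e_p}}$ when $n_p<e_p$.
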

\begin{proof}Note that $S_{tu}(0,0;c)$ is the number of residues $a\!\pmod{c}$ such that $\gcd(a,c/N)=1$ and $a\equiv\mp c_2\pmod{N}$, whenever $N\mid c$ and $c/N\equiv\pm\beta\!\pmod{N}$; otherwise, $S_{tu}(0,0;c)=0$. Indeed, for any $a,c\in\Z$ satisfying the first two conditions, there exists a unique $d\!\pmod{c}$ such that $\begin{bsmallmatrix}\!a&b\!\\\!c&d\!\end{bsmallmatrix}\in a(1/N)\gamma_t^{-1}(\pm\Gamma(N))\gamma_u a(N)$ for some $b\in N^{-1}\Z$; in fact, $b$ is uniquely determined.  Now $\lambda=\gcd(N, c/N)$, so if $N\mid c$ and $c/N\equiv\pm\beta\!\pmod{N}$, there are $\phi(c/N)/\phi(\lambda)$ residues modulo $c/N$ which are relatively prime to $c/N$ and are congruent to $\mp c_2\pmod{\lambda}$, and hence there are $\lambda\phi(c/N)/\phi(\lambda)$ residues modulo $c$ which are relatively prime to $c/N$ and are congruent to $\mp c_2\pmod{N}$, since $\lcm(c/N, N)=c/\lambda$.
\end{proof}
\begin{prop}\label{phiisholomorphic} For any nonzero integer $n$, $\abs*{S_{tu}(0,n;c)}\le2\sigma(n)$, and $\varphi_{tu}(n,s)$ is holomorphic in the domain $\Re{s}>1/2$.
\end{prop}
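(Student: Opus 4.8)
The plan is to first establish the pointwise bound $\abs{S_{tu}(0,n;c)}\le2\sigma(n)$, and then to deduce the holomorphy of $\varphi_{tu}(n,s)$ as an immediate consequence of the absolute convergence of its defining Dirichlet series. Everything rests on evaluating the Kloosterman sum explicitly, which (since its first argument is $0$) is really a twisted Ramanujan sum.

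First I would feed the explicit description of $a(1/N)\gamma_t^{-1}(\pm\Gamma(N))\gamma_ua(N)$ obtained above into the definition of $S_{tu}(0,n;c)$. As in the preceding lemma, the sum vanishes unless $N\mid c$; writing $M\coloneqq c/N$, the variable $a$ then runs over residues modulo $c$ with $\gcd(a,M)=1$ and $a\equiv\mp c_2\pmod N$, where the admissible sign(s) are exactly those for which $M\equiv\pm\beta\pmod N$. For a fixed $c$ only the sign choices $a_0=\pm c_2$ can occur, so by the triangle inequality it suffices to bound, for a single fixed residue $a_0\pmod N$, the sum
\[\Sigma\coloneqq\sum_{\substack{a\pmod c\\ \gcd(a,M)=1\\ a\equiv a_0\pmod N}}e^{2\pi ina/c}\]
by $\sigma(n)$; the factor of $2$ absorbs the two signs. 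Here $\gcd(a_0,\lambda)=1$, since $a_0=\pm c_2$ and $\alpha c_2-c_1\beta=\det(\gamma_u^{-1}\gamma_t)=1$ force $\gcd(c_2,\beta)=1$, while $\lambda=\gcd(N,\beta)$.

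The heart of the argument is the evaluation of $\Sigma$. I would strip the coprimality condition using $\delta_{\gcd(a,M)=1}=\sum_{d\mid\gcd(a,M)}\mu(d)$ and interchange the order of summation, obtaining
\[\Sigma=\sum_{d\mid M}\mu(d)\sum_{\substack{a\pmod c\\ d\mid a,\ a\equiv a_0\pmod N}}e^{2\pi ina/c}.\]
Since $d\mid M$ gives $\gcd(d,N)\mid\gcd(M,N)=\lambda$ (using $M\equiv\pm\beta\pmod N$) and $\gcd(a_0,\lambda)=1$, the congruence system $a\equiv0\pmod d$, $a\equiv a_0\pmod N$ is solvable only when $\gcd(d,N)=1$, in which case its solutions form a single progression $a\equiv a_1\pmod{dN}$. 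The inner sum is then the geometric sum $e^{2\pi ina_1/c}\sum_{j=0}^{M/d-1}e^{2\pi injd/M}$, whose ratio $e^{2\pi ind/M}$ equals $1$ exactly when $(M/d)\mid n$; thus it equals $(M/d)\,e^{2\pi ina_1/c}$ in that case and vanishes otherwise. Reindexing the surviving terms by $e\coloneqq M/d$, these are indexed by certain divisors $e$ of $n$, each contributing $M/d=e$ in absolute value, so
\[\abs{\Sigma}\le\sum_{\substack{e\mid M,\ e\mid n}}e\le\sum_{e\mid n}e=\sigma(n),\]
which yields $\abs{S_{tu}(0,n;c)}\le2\sigma(n)$.

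The holomorphy then follows at once. Because $\abs{S_{tu}(0,n;c)}\le2\sigma(n)$ uniformly in $c$, on any half-plane $\Re s\ge1/2+\delta$ the terms of $\sum_{c\ge1}c^{-2s}S_{tu}(0,n;c)$ are dominated by $2\sigma(n)\,c^{-1-2\delta}$, so the series converges absolutely and locally uniformly on $\Re s>1/2$ and defines a holomorphic function there; since the prefactor $\pi^s\abs{n}^{s-1}/\Gamma(s)$ is entire (recall $1/\Gamma$ is entire), $\varphi_{tu}(n,s)$ is holomorphic for $\Re s>1/2$. I expect the main obstacle to be the first step—correctly reading off the character sum from the coset description and carrying the Möbius-plus-geometric-sum evaluation through until the surviving terms are pinned to divisors of $n$; once the divisibility condition $(M/d)\mid n$ is isolated, both the bound $\sigma(n)$ and the holomorphy are immediate.
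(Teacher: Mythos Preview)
Your proposal is correct and follows essentially the same route as the paper: both arguments write the Kloosterman sum as (at most two copies of) a sum $\Sigma$ over residues in a fixed class modulo $N$ coprime to $M=c/N$, remove the coprimality by M\"obius, and then evaluate the resulting geometric progressions to see that only divisors $e=M/d$ of $n$ survive, giving $\abs{\Sigma}\le\sigma(n)$. The paper carries a few extra auxiliary parameters ($w=\gcd(m,N)$, $\eta$, a multiplier $k$) and reaches the closed form $S(m)=\sum_{t\mid\gcd(\eta,M)}\mu(M/t)\,\delta_{\gcd(M/t,\,N/w)=1}\,t\,e^{2\pi i a_0\eta/(c/w)}$ before bounding, whereas you short-circuit this by first observing $\gcd(a_0,\lambda)=1$ (from $\alpha c_2-c_1\beta=1$) and hence that only $d$ with $\gcd(d,N)=1$ contribute; this is a clean simplification but not a genuinely different method. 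The deduction of holomorphy from the uniform-in-$c$ bound is identical in spirit to the paper's one-line conclusion.
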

\begin{proof} For any integer $m$ such that $\gcd(m, c/N)=1$, define
\[S(m)\coloneqq\sum_{\substack{1\,\le\,a\,\le\,c\\a\equiv m\br\pmod{N}\\\gcd(a,\,c/N)=1}}e^{2\pi ian/c};\]
then
\[S_{tu}(0,n;c)=\delta_{N\mid c}(\delta_{c/N\equiv\beta\br\pmod{N}}S(-c_2)+\delta_{c/N\equiv-\beta\br\pmod{N}}S(c_2)).\]
Assume $N\mid c$, and let $w\coloneqq\gcd(m,N)$, $\eta\coloneqq\gcd(n,c/w)$, $k$ be an integer such that $\gcd(k,c/w)=1$ and $k\equiv n/\eta\!\pmod{c/w\eta}$, and $a_0$ be an integer such that $a_0\equiv km/w\!\pmod{N/w}$ and $((c/N)/\gcd(c/N, N/w))\mid a_0$.  In a similar manner to the proof of equation (2.26) in \cite{I2}, we find that
\begin{align*}S(m)&=\sum_{\substack{1\,\le\,a\,\le\,c/w\\a\equiv km/w\br\pmod{N/w}\\\gcd(a,\,c/N)=1}}e^{2\pi ia\eta/(c/w)}=\sum_{T\mid c/N}\mu(T)\sum_{\substack{1\,\le\,a\,\le\,c/w\\a\equiv km/w\br\pmod{N/w}\\T\mid a}}e^{2\pi ia\eta/(c/w)}\\&=\sum_{T\mid c/N}\mu(T)\delta_{\gcd(T,\,N/w)=1}\delta_{c\mid\eta TN}\frac{c}{TN}e^{2\pi ia_0\eta/(c/w)}\\&=\sum_{t\mid \gcd(\eta,\,c/N)}\mu\Big(\!\frac{c}{Nt}\!\Big)\delta_{\gcd(c/Nt,\,N/w)=1}te^{2\pi ia_0\eta/(c/w)}.\end{align*}
Since $\gcd(\eta,c/N)\le n$, we see that $\abs*{S(m)}\le\sigma(n)$ and $\abs*{S_{tu}(0,n;c)}\le2\sigma(n)$, as desired.  Since this estimate is independent of $c$, we conclude that $\varphi_{tu}(n,s)$ has a holomorphic continuation to the domain $\Re s>1/2$.
\end{proof}
\noindent Now, since $\lambda=\gcd(N,\beta)$,
\[\sum_{c\equiv\beta\br\pmod{N}}\frac{\phi(c)}{c^{2s}}=\frac{1}{\phi(N/\lambda)}\sum_{\substack{\chi\colon(\Z/(N/\lambda)\Z)^\times\to\C^\times\\\text{character}}}\frac{1}{\chi(\beta/\lambda)}\frac{\phi(\lambda)}{\lambda^{2s}}\frac{L(2s-1,\chi)}{L(2s,\chi)}\prod_{p\mid\lambda}\big(1-\chi(p)p^{-2s}\big)^{-1}.\]
Let $C\colon(\Z/N\Z)^2\to\C$ be a function invariant under the multiplication-by-$k$ map for all integers $k$ relatively prime to $N$.  By summing over the sets $\{(ka,kb)\in(\Z/N\Z)^2:k\in(\Z/N\Z)^\times\}$, we can prove the following.\newpage
\begin{prop}\label{eisenstein} As $\Im\tau\to\infty$,
\begin{align*}\sum_{\substack{1\,\le\,a,\,b\,\le\,N\\\gcd(a,\,b,\,N)=1}}C(a,b)E_{[a/b]\in X(N)}(\gamma_u\tau,s)&=\sum_{\substack{1\,\le\,a,\,b\,\le\,N\\\gcd(a,\,b,\,N)=1}}C(a,b)\delta_{[a/b],u}N^{-s}(\Im\tau)^s\\&+2\sqrt{\pi}\frac{\Gamma(s-1/2)}{\Gamma(s)}\frac{\zeta(2s-1)}{\zeta(2s)}\sum_{\substack{1\,\le\,a,\,b\,\le\,N\\\gcd(a,\,b,\,N)=1}}C(a,b)\frac{\lambda^{1-2s}N^{1-3s}}{\phi(N/\lambda)}\\&\qquad\times\raisebox{-0.15em}{$\bigg($}\prod_{p\mid N}\big(1-p^{-2s}\big)^{-1}\raisebox{-0.15em}{$\bigg)\!\bigg($}\prod_{p\mid N/\lambda}\big(1-p^{1-2s}\big)\!\raisebox{-0.15em}{$\bigg)$}(\Im\tau)^{1-s}\\&+O(e^{-2\pi\Im\tau}),\end{align*}
whenever $\Re s>1/2$.
\end{prop}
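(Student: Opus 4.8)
The plan is to expand each $E_{[a/b]\in X(N)}(\gamma_u\tau,s)$ by Theorem \ref{nhfourierexp}, multiply by $C(a,b)$, and sum over the finitely many pairs $(a,b)$ with $\gcd(a,b,N)=1$. Writing $t=[a/b]$, the expansion splits into the leading term $\delta_{tu}N^{-s}(\Im\tau)^s$, the secondary term $\varphi_{tu}(s)N^{1-s}(\Im\tau)^{1-s}$, and the non-constant modes $\sum_{n\ne0}\varphi_{tu}(n,s)W_s(n\tau/N)$. The weighted sum of the first term is already the first line of the asserted identity. For the non-constant modes, the estimate $\abs{S_{tu}(0,n;c)}\le2\sigma(n)$ from Proposition \ref{phiisholomorphic} gives a polynomial bound on $\varphi_{tu}(n,s)$, which against the exponential decay of the $K$-Bessel factor in $W_s(n\tau/N)$ shows that this finite sum decays exponentially as $\Im\tau\to\infty$, producing the error term recorded in the statement. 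Everything therefore reduces to computing $\sum_{a,b}C(a,b)\,\varphi_{[a/b],u}(s)$.

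For that sum I would substitute $\varphi_{tu}(s)=\sqrt{\pi}\,\Gamma(s-1/2)\Gamma(s)^{-1}\sum_{c}c^{-2s}S_{tu}(0,0;c)$ and invoke the lemma evaluating $S_{tu}(0,0;c)$, which forces $N\mid c$ and $c/N\equiv\pm\beta\pmod N$, where $\beta$ is the lower-left entry of $\gamma_u^{-1}\gamma_t$ and $\lambda=\gcd(N,\beta)$. Setting $c=Nc'$ collapses the $c$-sum to $\tfrac{\lambda}{\phi(\lambda)}N^{-2s}\sum_{c'\equiv\pm\beta\,(N)}\phi(c')c'^{-2s}$, and the character identity displayed just before the proposition rewrites each residue-restricted Dirichlet series through the ratios $L(2s-1,\chi)/L(2s,\chi)$ over characters $\chi$ modulo $N/\lambda$; the holomorphy in $\Re s>1/2$ from Proposition \ref{phiisholomorphic} makes these manipulations legitimate.

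The decisive step is to sum over the orbits of the multiplication action of $(\Z/N\Z)^\times$, using the invariance of $C$. A short computation with $\gamma_u^{-1}\gamma_t$ shows that replacing $(a,b)$ by $(ka,kb)$ multiplies $\beta$ by $k$ modulo $N$; hence $\lambda$ is constant along each orbit, each orbit has exactly $\phi(N)$ elements (the action is free since $\gcd(a,b,N)=1$), and $\beta/\lambda$ runs over all of $(\Z/(N/\lambda)\Z)^\times$ with uniform multiplicity $\phi(N)/\phi(N/\lambda)$. Summing $\chi(\beta/\lambda)^{-1}$ over an orbit then vanishes by orthogonality unless $\chi$ is the principal character $\chi_0$ modulo $N/\lambda$, so only the $\chi_0$-term survives; this is why the asserted identity is an equality of orbit-sums rather than a term-by-term equality.

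It remains to simplify the surviving $\chi_0$-contribution, and I expect this Euler-factor bookkeeping to be the main obstacle. Writing $L(w,\chi_0)=\zeta(w)\prod_{p\mid N/\lambda}(1-p^{-w})$ turns the $L$-ratio into $\tfrac{\zeta(2s-1)}{\zeta(2s)}\prod_{p\mid N/\lambda}\tfrac{1-p^{1-2s}}{1-p^{-2s}}$, and the factor $\prod_{p\mid\lambda}(1-\chi_0(p)p^{-2s})^{-1}$ must be combined with $\prod_{p\mid N/\lambda}(1-p^{-2s})^{-1}$ to yield $\prod_{p\mid N}(1-p^{-2s})^{-1}$; the care here is in primes dividing both $\lambda$ and $N/\lambda$, where $\chi_0(p)=0$ renders the $\lambda$-factor trivial, so that the two partial products recombine into the single product over $p\mid N$. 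Collecting the prefactor $\tfrac{\lambda}{\phi(\lambda)}\cdot\tfrac{\phi(\lambda)}{\lambda^{2s}}=\lambda^{1-2s}$, the factor $2$ from the residues $\pm\beta$, the orbit size $\phi(N)$, the normalizer $\phi(N/\lambda)$, and the power $N^{1-s}\cdot N^{-2s}=N^{1-3s}$ then assembles precisely the second and third lines of the statement.
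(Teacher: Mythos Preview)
Your proposal is correct and follows exactly the approach the paper indicates: expand via Theorem~\ref{nhfourierexp}, use Proposition~\ref{phiisholomorphic} to control the nonconstant modes, and then exploit the $(\Z/N\Z)^\times$-invariance of $C$ to average over orbits so that orthogonality kills every character except $\chi_0$. One small cosmetic point: in your final ``collecting'' sentence the factor $\phi(N)$ from the orbit size is not a net contributor---it appears when you pass from the per-term expression to the orbit sum, but the stated formula is again a sum over all $(a,b)$, so the $\phi(N)$ cancels; the displayed coefficient $\lambda^{1-2s}N^{1-3s}/\phi(N/\lambda)$ is already the per-term $\chi_0$-contribution.
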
\vspace{0.3em}
\section{\label{galconj}Proof of Theorem \ref{fundunitcube}.}
In this section, we will prove Theorem \ref{fundunitcube}. Suppose that $n$ is a positive integer relatively prime to $6$.  Let $\omega\coloneqq e^{2\pi i/3}$ (as before), $k\coloneqq \Q(\omega)$, and $K\coloneqq \Q(\!\sqrt[3]{n})$, and for any integer $m$, let $R_m$ denote the ring class field of the order $\Z[m\omega]\subseteq\Z[\omega]$.  Let $n=\prod_i\lambda_i^{c_i}$ be the prime factorization of $n$ in $\ok=\Z[\omega]$.  Our approach is as follows.  First, in order to understand the Galois conjugates of $X(n\omega)$, we will use the Shimura Reciprocity theorem to express them as other special values of $X$.  We will then rewrite the norm of $X(n\omega)+1$ over $R_{6n}/kK$ as the value at $\omega$ of a modular function $U$ for the group $\Gamma(6n)$. Next, we will express the logarithm of the absolute value of this modular function as the limit as $s\to1$ of an appropriate weighted sum of non-holomorphic Eisenstein series (Theorem \ref{harmonicfns}), and rewrite the weighted sum in terms of a Dirichlet $L$-function of $k$ with cubic character (Theorem \ref{lfunction}).  Finally, we will apply the class number formula to write the residue of this Dirichlet $L$-function at $s=1$ (which is related to the residue of the Dedekind zeta function of $K$ at $s=1$), and hence the norm of $X(n\omega)+1$, in terms of the fundamental unit of $K$.

For any $x=a+b\:\!\omega\in k^\times$, the unique matrix $M(x)\in\GL_2\Q$ such that $M(x)\begin{bsmallmatrix}\!n\omega\!\\\!1\!\end{bsmallmatrix}=x\begin{bsmallmatrix}\!n\omega\!\\\!1\!\end{bsmallmatrix}$ is
\[M(x)\coloneqq\begin{bmatrix}a-b&-bn\\b/n&a\end{bmatrix}\!;\]
this matrix $M(x)$ has determinant $Nx=a^2-ab+b^2$.  We now prove a result which details our particular application of the Shimura Reciprocity theorem and shows that $X(n\omega)\in R_{6n}$, which is a more general application than in \cite{Sa}.
\begin{prop}\label{MandS} Suppose that $x\in\Z[\omega]$ is relatively prime to $6n$, and that $S\in\SL_2\Q$ is a matrix such that $nS$ and $M(x)S^{-1}$ are integral, and $M(x)S^{-1}\equiv\id\pmod{6}$.  Then for any $(\alpha,\beta)\not\equiv(0,0)\!\pmod{6}$,
\[(x,\,K_{6n}/k)f^{(6)}_{\alpha,\beta}(n\omega)=f^{(6)}_{\alpha,\beta}(Sn\omega),\]
where $(-,\,K_m/k)$ is the Artin map.
\end{prop}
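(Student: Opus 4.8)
The plan is to reduce the ideal-theoretic Artin symbol $(x,K_{6n}/k)$ to an idelic one to which the Shimura Reciprocity theorem (Theorem \ref{shimurareciprocity}) applies, and then to recognize the three hypotheses on $S$ as precisely the conditions needed to produce a valid pair $(U,\gamma)$ in that theorem. First I would note that, by Corollary \ref{rayclassfield}, the value $f^{(6)}_{\alpha,\beta}(n\omega)$ lies in $K_{6n}$, so it suffices to realize $(x,K_{6n}/k)$ as the restriction to $K_{6n}$ of an idelic Artin symbol $[s,k]$ for a conveniently chosen idele $s\in\A_k^\times$. For this, let $s$ be the idele whose component at each rational place $p$ dividing $6n$, and at $p=\infty$, is the diagonal image of $x^{-1}$, and whose component at every other finite place is $1$. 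Since the principal idele $x$ has trivial Artin symbol, $[s,k]=[sx,k]$, and $sx$ is trivial at every place dividing $6n$ and at $\infty$ while agreeing with $x$ at the remaining finite places; as $x$ is coprime to $6n$, the ideal attached to $sx$ is exactly $(x)$, so $[s,k]$ restricts to $(x,K_{6n}/k)$ on $K_{6n}$.

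Next I would compute $q_{z*}(s)$ for $z=n\omega$. Because $q_z$ is the homomorphism $w\mapsto M(w)$ and $s$ has component $x^{-1}$ at the places over $6n$ and at $\infty$ and $1$ elsewhere, $q_{z*}(s)$ equals $M(x)^{-1}$ at each of those places and $\id$ at all finite places away from $6n$; hence $q_{z*}(s)^{-1}$ equals $M(x)$ at the places over $6n$ and at $\infty$, and $\id$ elsewhere. I then take $\gamma=S$, which has $\det S=1>0$, and solve $U\gamma=q_{z*}(s)^{-1}$ place by place, obtaining $U_p=M(x)S^{-1}$ for $p\mid 6n$ and $U_p=S^{-1}$ for finite $p\nmid 6n$.

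The crux is checking that this $U$ really lies in $\prod_p\GL_2\Z_p$ and preserves the index $(\alpha,\beta)$ modulo $6$, and this is exactly where the hypotheses enter. The assumption that $nS$ is integral forces $S$, and hence $S^{-1}$, to be integral at every place not dividing $n$, so $U_p=S^{-1}\in\GL_2\Z_p$ for $p\nmid 6n$; the assumption that $M(x)S^{-1}$ is integral gives $U_p\in\Mat_{2\times2}\Z_p$ for $p\mid 6n$. For the determinant I would compute $\det(M(x)S^{-1})=\det M(x)=Nx$, and observe that since $x$ is coprime to $6n$ in $\ok$ its norm $Nx$ is coprime to $6n$ in $\Z$, so $\det U_p$ is a $p$-adic unit for every $p\mid 6n$; thus $U_p\in\GL_2\Z_p$ at every finite place. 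Finally, the hypothesis $M(x)S^{-1}\equiv\id\pmod{6}$ gives $U\equiv\id\pmod{6}$, whence $(\alpha,\beta)U\equiv(\alpha,\beta)\pmod{6}$ and, by Lemma \ref{divvalidentity}, $f^{(6)}_{(\alpha,\beta)U}=f^{(6)}_{\alpha,\beta}$.

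Applying Theorem \ref{shimurareciprocity} with this $(U,\gamma)$ then yields $[s,k]\,f^{(6)}_{\alpha,\beta}(n\omega)=f^{(6)}_{(\alpha,\beta)U}(\gamma\cdot n\omega)=f^{(6)}_{\alpha,\beta}(Sn\omega)$, and restricting the automorphism $[s,k]$ to $K_{6n}$, which contains the value by Corollary \ref{rayclassfield}, identifies it with $(x,K_{6n}/k)$ and gives the claim. I expect the main obstacle to be purely a matter of bookkeeping: pinning down the normalization in the idele-to-ideal dictionary so that the sign of the exponent on $x$, that is, the choice of $x^{-1}$ rather than $x$ in $s$, is the one that makes the integrality and congruence conditions on $S$ come out exactly as stated, together with the harmless but necessary handling of the archimedean component of $U$, which sits in $\GL_2\R$ with positive determinant $Nx$.
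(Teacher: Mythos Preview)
Your argument is correct and follows the same overall strategy as the paper—reduce to Shimura Reciprocity and exhibit a valid factorization $U\gamma=q_{z*}(s)^{-1}$—but you make a different, and in some ways cleaner, choice of auxiliary data. The paper takes the idele $s$ supported at the primes dividing $Nx$ (setting $s_\nu$ to the relevant prime-power factor of $x$), then chooses $\gamma=SM(x)^{-1}$ and uses at the end that $M(x)^{-1}$ fixes $n\omega$ to get $\gamma(n\omega)=S(n\omega)$; the integrality of $U_\nu$ at the primes of $Nx$ then requires the observation that $\det\bigl(M(\pi_i^{e_i}\bar\pi_i^{e'_i})^{-1}M(x)\bigr)$ is coprime to $N\pi_i$. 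You instead put all the ``$x$'' into the components at primes dividing $6n$ (and $\infty$), multiply by the principal idele $x$ to identify $[s,k]\!\mid_{K_{6n}}$ with $(x,K_{6n}/k)$, and take $\gamma=S$ directly. This makes the integrality checks fall out immediately from the three stated hypotheses on $S$: $nS$ integral handles the places away from $6n$, $M(x)S^{-1}$ integral handles the places dividing $6n$, and the congruence modulo $6$ fixes the index $(\alpha,\beta)$. The trade-off is that your $[s,k]$ only agrees with $(x,\,-\,/k)$ on $K_{6n}$ rather than on all of $k^{ab}$, which is all that is needed here; the paper's choice gives the stronger identification $[s,k]=(x,k^{ab}/k)$ at the cost of a slightly more involved case analysis for $U_\nu$. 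Your caveat about the normalization of the idelic Artin map is well placed: getting the exponent sign on $x$ (and hence whether one lands on $S$ or $S^{-1}$) consistent with the convention in Theorem~\ref{shimurareciprocity} is the only genuine thing to pin down.
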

\begin{proof} We follow the general proof method in \cite{Sa}, but generalize many steps.  Suppose that $x$ factorizes in $\Z[\omega]$ into primes as $\pi_1^{e^{\vphantom{\prime}}_1}\overline{\pi}_1^{e'_1}\cdots\pi_r^{e^{\vphantom{\prime}}_r}\overline{\pi}_r^{e'_r}q_1^{f_1}\cdots q_\lambda^{f_\lambda}$, where $\pi_i\in\Z[\omega]$ is a prime of degree $1$ for each $1\le i\le r$ and $q_j\in\Z$ for each $1\le j\le \lambda$ is a prime of degree $2$ in $\Z[\omega]$.  Define the idele $s\in\mathbb{A}_k^\times$ by
\[s_\nu=\begin{cases}\\[-1.7777em]\pi_i^{e^{\vphantom{prime}}_i}\overline{\pi}_i^{e'_i}&\quad\text{if }\nu=\pi_i\text{ or }\overline{\pi}_i\\q_j^{f_j}&\quad\text{if }\nu=q_j\\1&\quad\text{otherwise}\hspace{2.1111em}.\vspace{-0.2777em}\end{cases}\]
Then $(x,k^{ab}/k)=[s,k]$, since $\pi$ is a unit in the ring of integers of the local field $k_{\overline{\pi}}$ for all primes $\pi\in\ok$ of degree $1$.  In the notation of the Shimura reciprocity theorem, $M(y) = q_{n\omega}(y)$ for any $y\in\Z[\omega]$, so $q_{n\omega*}(s)$ is the element of $\prod_{\nu\in M_\Q}\GL_2\Q_\nu$ with $q_{n\omega*}(s)_{\nu}=M(s_{\nu'})$ for all places $\nu$ of $\Q$ and $\nu'\mid\nu$.  If we define $\gamma\coloneqq SM(x)^{-1}$ and choose $U\in\prod_{\nu\in M_\Q}\GL_2\Z_\nu$ such that
\[U_\nu=\begin{cases}\\[-1.7777em]M\big(\pi_i^{e_i}\overline{\pi}_i^{e'_i}\big)^{-1}M(x)S^{-1}&\quad\text{if }\nu=N\pi_i\\M\big(q_j^{f_j}\big)^{-1}M(x)S^{-1}&\quad\text{if }\nu=q_j\\M(x)S^{-1}&\quad\text{otherwise}\:\:,\vspace{-0.2777em}\end{cases}\]
we see that $U\gamma=q_{n\omega*}(s)$.  Note that $U_\nu$ is invertible over $\Z_\nu$ because $\det M(\pi_i^{e_i}\overline{\pi}_i^{e'_{\smash{i}}})^{-1}M(x)$ is relatively prime to $N\pi_i$, $\det M(q_j^{f_j})^{-1}M(x)$ is relatively prime to $q_j$, and $\det M(x)S^{-1}=Nx$ is relatively prime to all primes not dividing $Nx$.  Since $U_2=U_3=M(x)S^{-1}\equiv\id\!\pmod{6}$, we see that $f^{(6)}_{(\alpha,\beta)U}=f^{(6)}_{\alpha, \beta}$.  Since $M(x)^{-1}(n\omega)=n\omega$, we conclude by the Shimura Reciprocity theorem that
\[(x,\,K_{6n}/k)f^{(6)}_{\alpha,\beta}(n\omega)=[s,k]f^{(6)}_{\alpha,\beta}(n\omega)=f^{(6)}_{\alpha,\beta}(\gamma n\omega)=f^{(6)}_{\alpha,\beta}(Sn\omega),\]
as desired.\end{proof}

We can choose an $S$ satisfying the conditions of Proposition \ref{MandS} as follows.  Suppose $x=a+b\:\!\omega$, and let $q=\pm\gcd(a,b)$, the sign chosen such that $q\equiv1\!\pmod{6}$.  Let $r,s\in\Z$ such that $r(a/q)+s(b/q)=((Nx)/q^2-1)/6$.  Then
\[S(x)\coloneqq\begin{bmatrix}(a-b)/q-6r&-(b/q-6s)n\\b/qn&a/q\end{bmatrix}\]
satisfies the desired conditions -- that $nS(x)$ and $M(x)S(x)^{-1}$ are integral, and $M(x)S(x)^{-1}\equiv\id\!\pmod{6}$.

By class field theory, for any positive integer $m$, the ring class field $R_m$ is the fixed field in the ray class field $K_m$ of $k$ with modulus $m$ of the image of $P_\Z(m)/P(m)$ under the Artin map $(-, K_m/k)$, where $P(m)$ is the group generated by principal fractional ideals of elements of $k$ congruent to $1\!\pmod{m}$, and $P_\Z(m)$ is the group generated by the principal ideals of elements of $\Z[m\omega]$ relatively prime to $m$ (see e.g.\ Cox \cite{Cox}, \S7--9).  An element $x\in K_m$ is then contained in $R_m$ if and only if $(\alpha,K_m/k)x=x$ for all $\alpha\in\Z[m\omega]$ relatively prime to $m$.  Since $S\equiv M(x)\equiv\id\!\pmod{6}$ whenever $x\in\Z[6n\omega]$, and $S$ is an integral matrix of determinant $1$, we deduce the following corollary.
\begin{cor} For any positive integer $n$ relatively prime to $6$, $\phi(n\omega)\in E_1(R_{6n})$.\end{cor}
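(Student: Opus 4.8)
The plan is to show that both coordinates of the affine point $\phi(n\omega)=(X(n\omega),Y(n\omega))$ lie in the ring class field $R_{6n}$ (this is an honest affine point, since $\phi$ is an isomorphism and $n\omega\neq\infty$). First I would record that $X$ and $Y$ are rational functions of the Fricke functions $f^{(6)}_{\alpha,\beta}$ with coefficients in $k=\Q(\omega)$: since $e^{(6)}_{\alpha,\beta}=\frac{\Delta}{g_2g_3}f^{(6)}_{\alpha,\beta}$, the common factor $\frac{\Delta}{g_2g_3}$ cancels in the ratios $A/B$ and $C/D$, so that $X=-\omega^2A/B$ and $Y=-3C/D$ become quotients of $k$-linear combinations of the $f^{(6)}_{\alpha,\beta}$. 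These quotients are well defined at $n\omega$ because $B$ and $D$ have no zeros on $\uhp$ by Lemma \ref{tableoforders} and $n\omega\in\uhp$.

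By Corollary \ref{rayclassfield}, applied to the order $\mathscr{O}=\Z[n\omega]$ in the imaginary quadratic field $k$ (which has conductor $n$ in $\Z[\omega]$) with $z=n\omega$ and $N=6$, each value $f^{(6)}_{\alpha,\beta}(n\omega)$ lies in the ray class field $K_{6n}$ of modulus $6n$; hence so do $X(n\omega)$ and $Y(n\omega)$. It then remains only to descend from $K_{6n}$ to $R_{6n}$. By the characterization recalled just before the statement, it suffices to check that the Artin automorphism $(x,K_{6n}/k)$ fixes both $X(n\omega)$ and $Y(n\omega)$ for every $x\in\Z[6n\omega]$ relatively prime to $6n$.

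Fix such an $x=a+b\:\!\omega$, so that $b\equiv0\!\pmod{6n}$, and take $S=S(x)$ as constructed after Proposition \ref{MandS}, which gives $(x,K_{6n}/k)f^{(6)}_{\alpha,\beta}(n\omega)=f^{(6)}_{\alpha,\beta}(S(x)n\omega)$. The crux is the explicit reduction of $S(x)$ modulo $6$: writing $q=\pm\gcd(a,b)$ with $q\equiv1\!\pmod6$ (note $q$ is coprime to $6$ because $x$ is coprime to $6n$), one checks that $6\mid b/q$ and $6\mid b/(qn)$, whence every off-diagonal entry of $S(x)$ vanishes mod $6$ and both diagonal entries reduce to $a/q$. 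Since $\det S(x)=1$ forces $(a/q)^2\equiv1\!\pmod6$, we obtain $S(x)\equiv\pm\id\!\pmod6$, i.e.\ $S(x)\in\{\pm\id\}\cdot\Gamma(6)$. Because the Fricke functions are invariant under $\Gamma(6)$ and satisfy $f^{(6)}_{-\alpha,-\beta}=f^{(6)}_{\alpha,\beta}$ (Lemma \ref{divvalidentity}), this yields $f^{(6)}_{\alpha,\beta}(S(x)n\omega)=f^{(6)}_{\alpha,\beta}(n\omega)$, so $(x,K_{6n}/k)$ fixes each Fricke value. As $(x,K_{6n}/k)$ also fixes the coefficient field $k$, it fixes the rational expressions $X(n\omega)$ and $Y(n\omega)$, completing the descent and showing $\phi(n\omega)\in E_1(R_{6n})$.

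I expect the main obstacle to be the mod-$6$ computation of $S(x)$, and in particular the observation that the ambiguous sign ($S(x)\equiv\pm\id$ rather than exactly $\id$) is harmless: it is absorbed by the $\pm$-invariance of the Fricke functions recorded in Lemma \ref{divvalidentity}, so there is no need to force $a/q\equiv1\!\pmod6$. A secondary technical point is the nonvanishing of the denominators, i.e.\ confirming via Lemma \ref{tableoforders} that $B(n\omega)$ and $D(n\omega)$ are nonzero so that the rational-function expressions for $X(n\omega)$ and $Y(n\omega)$ in terms of the $f^{(6)}_{\alpha,\beta}(n\omega)$ are genuinely defined.
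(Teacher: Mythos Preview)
Your proof is correct and follows essentially the same route as the paper's: use Corollary~\ref{rayclassfield} to land the Fricke values in $K_{6n}$, then descend to $R_{6n}$ by checking invariance under $(x,K_{6n}/k)$ for $x\in\Z[6n\omega]$ via Proposition~\ref{MandS} and the explicit matrix $S(x)$, which reduces to $\pm\id\pmod 6$ and hence lies in $\pm\Gamma(6)\subset\SL_2\Z$. The paper states this more tersely (``$S\equiv M(x)\equiv\id\pmod 6$ whenever $x\in\Z[6n\omega]$, and $S$ is an integral matrix of determinant $1$''), implicitly replacing $x$ by $-x$ if needed so that $a\equiv 1\pmod 6$; your version instead keeps the sign ambiguity and absorbs it with the $\pm$-invariance of the Fricke functions from Lemma~\ref{divvalidentity}, which is a perfectly clean way to handle the same point.
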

We need to determine the set of Galois conjugates of $X(n\omega)$, which by Proposition \ref{MandS} reduces to finding a set of values $x\in\Z[\omega]$ such that the automorphisms $(x,R_{6n}/k)$ completely describe the group $\Gal(R_{6n}/K)$.  First, we prove that $K\subseteq R_{3n}$ and determine the action of Frobenius elements on $\sqrt[3]{n}$.
\begin{lemma}\label{froboncbrt}Suppose as above that $n$ is a positive integer relatively prime to $6$ and that $n=\prod_i \lambda_i^{c_i}$ is the prime factorization of $n$ in $\ok$.\begin{enumerate}[label=\textup{(\alph*)}]
\item\label{froboncbrtn} If $x\in\ok$ and $x\equiv\pm\omega^j\!\pmod{3}$ for some integer $j$, \[(x,\,k^{ab}/k)\sqrt[3]{n}=\omega^{-j(n^2-1)/3}\sqrt[3]{n}\,\prod_i\Big(\!\frac{x}{\lambda_i}\!\Big)_{\!3}^{\!c_i},\vspace{-0.9em}\]
and if $x\equiv\omega^r\!\pmod{2}$, $(x,\,k^{ab}/k)\sqrt[3]{2}=\omega^{r-j}\sqrt[3]{2}$.
\item\label{ringclassfields} $R_{6n}=R_{3n}(\!\sqrt[3]{2})$ and $\sqrt[3]{n}\in R_{3n}$.  In addition, $\sqrt[3]{n}\in R_n$ if and only if $n\equiv\pm1\!\pmod{9}$.
\end{enumerate}
\end{lemma}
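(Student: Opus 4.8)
The plan is to translate the Artin action into the cubic power residue symbol and then bring in cubic reciprocity together with the local structure at $\lambda=1-\omega$. For part \ref{froboncbrtn} I would first use the idele $s$ built in the proof of Proposition \ref{MandS} to identify $(x,\,k^{ab}/k)$ with the Artin symbol of the ideal $(x)$, and then invoke the defining property of the cubic symbol in Kummer theory: for a prime $\p\nmid 3n$ the Frobenius $\Frob_{\p}$ scales $\sqrt[3]{n}$ by $\big(\frac{n}{\p}\big)_3$. Extending multiplicatively over $(x)=\prod_{\p}\p^{v_{\p}(x)}$ gives $(x,\,k^{ab}/k)\sqrt[3]{n}=\big(\frac{n}{x}\big)_3\sqrt[3]{n}$, and likewise with $n$ replaced by $2$. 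I would then flip these symbols with Eisenstein's cubic reciprocity, choosing each $\lambda_i$ (and the prime $2$, which is inert and already $\equiv2\pmod 3$) to be the primary generator of its ideal so that no cross-terms appear, and normalizing $x$ by the hypothesis $x\equiv\pm\omega^j\pmod 3$. The only surviving correction then comes from the $\mu_6$-part of $x$ via the supplementary law $\big(\frac{\omega}{\beta}\big)_3=\omega^{(N\beta-1)/3}$: for $\sqrt[3]{n}$ it produces $\big(\frac{\omega^{-j}}{(n)}\big)_3=\omega^{-j\sum_i c_i(N\lambda_i-1)/3}=\omega^{-j(n^2-1)/3}$, using $N\lambda_i\equiv1\pmod 3$ and the elementary congruence $\sum_i c_i\tfrac{N\lambda_i-1}{3}\equiv\tfrac{n^2-1}{3}\pmod 3$; for $\sqrt[3]{2}$ it gives $\big(\frac{x}{2}\big)_3\equiv x^{(4-1)/3}=x\equiv\omega^r\pmod 2$ times the correction $\big(\frac{\omega^{-j}}{2}\big)_3=\omega^{-j}$, i.e.\ $\omega^{r-j}$.

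For part \ref{ringclassfields} I would use the criterion recalled just before the lemma: an element of $K_m$ lies in $R_m$ iff it is fixed by $(\alpha,\,K_m/k)$ for all $\alpha\in\Z[m\omega]$ coprime to $m$. To place $\sqrt[3]{n}$ in $R_{3n}$, membership in $K_{3n}$ follows from a conductor computation for $k(\sqrt[3]{n})/k$: at each $\lambda_i\mid n$ the ramification is tame with exponent $1\le c_i$, while at $\lambda$ above $3$ the key local fact is that $\sqrt[3]{n}$ contributes conductor exponent $0$ or $2$ and never more. This is because any rational integer $n\in\Z_3^\times$, written $n=\pm(1+3t)$ with $t\in\Z_3$, lies in $\pm U^{(2)}$ (where $U^{(i)}=1+\lambda^i\mathcal{O}_\lambda$ and $3=-\omega^2\lambda^2$); the cubes among the units of $\Q_3(\omega)$ are exactly $\mu_2\cdot U^{(4)}$; and since $v_\lambda(3t)$ is even, $n$ is either a cube (when $n\equiv\pm1\pmod 9$) or has ramification break $1$ and conductor exponent $2$. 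Hence the conductor divides $(3n)$ and $\sqrt[3]{n}\in K_{3n}$. For the fixed-point condition I apply part \ref{froboncbrtn}: any $\alpha=u+3nv\omega$ reduces to the rational integer $u$ modulo $3$ (so $j=0$ and $\omega^{-j(n^2-1)/3}=1$) and modulo each $\lambda_i$, and $\prod_i\big(\frac{u}{\lambda_i}\big)_3^{c_i}=1$ since split primes cancel in conjugate pairs and each inert $p\equiv2\pmod 3$ gives $u^{(p^2-1)/3}=(u^{p-1})^{(p+1)/3}\equiv1\pmod p$. Thus $\sqrt[3]{n}\in R_{3n}$.

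The identity $R_{6n}=R_{3n}(\sqrt[3]{2})$ would then follow from three observations: $[R_{6n}:R_{3n}]=3$ (the ratio of the class numbers of the orders $\Z[6n\omega]$ and $\Z[3n\omega]$, where the single extra Euler factor at $2$, with $\big(\frac{-3}{2}\big)=-1$, contributes $1+\tfrac12$ against $6n/3n=2$); $\sqrt[3]{2}\in R_{6n}$ by the same recipe (its conductor $(2)\lambda^2$ divides $(6n)$, and part \ref{froboncbrtn} gives $\omega^{r-j}=1$ for $\alpha\in\Z[6n\omega]$); and $\sqrt[3]{2}\notin R_{3n}$ because $k(\sqrt[3]{2})/k$ ramifies at the inert prime $2\nmid 3n$, whence $\sqrt[3]{2}\notin K_{3n}\supseteq R_{3n}$. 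Since $3$ is prime, equality holds. Finally, for $\sqrt[3]{n}\in R_n\iff n\equiv\pm1\pmod 9$, the same fixed-point computation over $\alpha=u+nv\omega\in\Z[n\omega]$ now has a surviving $\omega$-term modulo $3$ (as $3\nmid n$), so $j$ runs over all of $\{0,1,2\}$; by part \ref{froboncbrtn} the action is multiplication by $\omega^{-j(n^2-1)/3}$, which is trivial for every $j$ iff $9\mid n^2-1$. This is consistent with membership in $K_n$, since the local computation above shows $k(\sqrt[3]{n})/k$ is unramified at $\lambda$ exactly when $n\equiv\pm1\pmod 9$; when this fails, $\sqrt[3]{n}\notin K_n\supseteq R_n$.

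I expect the main obstacle to be the precise evaluation of the cubic-reciprocity correction in part \ref{froboncbrtn} -- confirming that the accumulated supplementary-law contributions collapse to exactly $\omega^{-j(n^2-1)/3}$ and $\omega^{r-j}$ under the stated normalization of $x$ -- since the rest of the lemma is built on that formula. The supporting local computation at $\lambda=1-\omega$ (cubes in the units of $\Q_3(\omega)$, yielding conductor exponent $0$ or $2$) is the other technical input, and together these two pieces control both the conductor bounds and the Galois invariance demanded by the ring class field criterion.
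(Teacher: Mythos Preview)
Your proposal is correct and follows essentially the same route as the paper. Both arguments reduce part~(a) to cubic reciprocity plus the supplementary law $\big(\frac{\omega}{\beta}\big)_3=\omega^{(N\beta-1)/3}$, collapsing the accumulated exponent $\sum_i c_i(N\lambda_i-1)/3$ to $(n^2-1)/3\pmod 3$; and both deduce part~(b) from part~(a) via the ring-class-field fixed-point criterion together with the index $[R_{6n}:R_{3n}]=3$. Your write-up is in fact more explicit in two places where the paper is terse: you carry out the local conductor computation at $\lambda=1-\omega$ directly (the paper cites Cassels for the ramification of $kK/k$ at $\sqrt{-3}$), and you verify that $\prod_i\big(\frac{u}{\lambda_i}\big)_3^{c_i}=1$ for rational $u$ by pairing conjugate split primes and using Fermat at inert primes (the paper leaves this implicit in the phrase ``only depends on the class $x\pmod{3n}$'').
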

\begin{proof} \ref{froboncbrtn} For any prime $\pi\in\Z[\omega]$ such that $\pi\equiv\pm\omega^j\!\pmod{3}$, it follows from cubic reciprocity that
\[\Frob_{\pi}\!\sqrt[3]{n}=\Big(\!\frac{n}{\pi\omega^{-j}}\!\Big)_{\!3}\sqrt[3]{n}=\sqrt[3]{n}\,\prod_i\Big(\!\frac{\lambda_i}{\pi\omega^{-j}}\!\Big)_{\!3}^{\!c_i}=\sqrt[3]{n}\,\prod_i\Big(\!\frac{\pi}{\lambda_i}\!\Big)_{\!3}^{\!c_i}\omega^{-jc_i(N\lambda_i-1)/3}.\]
But we can easily check, by casework on $n\!\pmod{9}$, that $\sum_i c_i(N\lambda_i-1)\equiv n^2-1\pmod{9}$, as desired.  The proof for the action of Artin isomorphisms on $\sqrt[3]{2}$ is similar.

\ref{ringclassfields} By part \ref{froboncbrtn}, $(x,k^{ab}/k)\sqrt[3]{2}$ only depends on the residue class $x\!\pmod{6}$, but does depend on the residue class $x\!\pmod{2}$, hence $R_{6n}=R_{3n}(\!\sqrt[3]{2})$ (since $[R_{6n}:R_{3n}]=3$).  Similarly, $(x,\,k^{ab}/k)\sqrt[3]{n}$ only depends on the class $x\!\pmod{3n}$, and only depends on the class $x\!\pmod{n}$ if and only if $\omega^{(n^2-1)/3}=1$, i.e., when $n\equiv\pm1\!\pmod{9}$.  By the computation of $\oK$ in \cite{Cas}, $kK/k$ is unramified at the prime $(\sqrt{-3})$ if and only if $n\equiv\pm1\!\pmod{9}$, so that $\sqrt[3]{n}\in R_n$ if and only if $n\equiv\pm1\!\pmod{9}$, as desired.
\end{proof}
\begin{prop} Let $\Lambda$ be a fundamental parallelogram for the lattice $6n\ok$ and let $A_i$ be a set of representatives in $\Lambda$ for the quotient $(\ok/n\ok)^\times/(\Z/n\Z)^\times$, all of which are congruent to $\omega^i\!\pmod{2}$ and to $1\!\pmod{3}$.  For any $i\in\{0,1,2\}$, let us define $B_i\coloneqq\big\{x\in A_i:\prod_i\!\big(\!\frac{x}{\lambda_i}\!\big)_3^{c_i}=1\big\}$.  Then the map $B_0\cup B_1\cup B_2\to\Gal(R_{6n}/kK)$ under which $x\mapsto(x,R_{6n}/k)$ is a bijection.
\end{prop}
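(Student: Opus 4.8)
The plan is to read the map off from the Artin-reciprocity description of $R_{6n}$ recalled before Lemma \ref{froboncbrt}, and to prove injectivity and surjectivity directly, with no cardinality count. Since $\ok=\Z[\omega]$ has class number one, every ideal prime to $6n$ is principal, so $x\mapsto(x,R_{6n}/k)$ already surjects from the set of $x\in\ok$ prime to $6n$ onto $\Gal(R_{6n}/k)$. The key preliminary is to make the kernel concrete: as $P_\Z(6n)$ is generated by the principal ideals $(z)$ with $z\in\Z[6n\omega]$ prime to $6n$, and every such $z$ is congruent modulo $6n\ok$ to a rational integer, I would show that for $w\in\ok$ prime to $6n$ one has $(w)\in P_\Z(6n)$ iff $\bar w$ lies in the subgroup $H\le(\ok/6n\ok)^\times$ generated by the images of $\ok^\times$ and of $(\Z/6n\Z)^\times$; the nontrivial direction uses that an element of $\ok$ congruent to a rational integer modulo $6n\ok$ automatically lies in $\Z+6n\omega\Z=\Z[6n\omega]$. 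Thus $(x,R_{6n}/k)=(y,R_{6n}/k)$ exactly when $\bar x\,\bar y^{-1}\in H$. Finally, for $x\equiv1\!\pmod3$ Lemma \ref{froboncbrt}(a) gives $(x,R_{6n}/k)\sqrt[3]{n}=\sqrt[3]{n}\,\prod_i(x/\lambda_i)_3^{c_i}$, so the cubic-symbol condition defining the $B_i$ is precisely the condition that the automorphism fix $\sqrt[3]{n}$; as $k(\sqrt[3]{n})=kK$, the map does take values in $\Gal(R_{6n}/kK)$.

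For injectivity, suppose $x\in B_i$ and $y\in B_{i'}$ satisfy $\bar x\,\bar y^{-1}=\bar u\,\bar a$ with $u\in\ok^\times$ and $a\in(\Z/6n\Z)^\times$, and split this via $(\ok/6n\ok)^\times\cong(\ok/2\ok)^\times\times(\ok/3\ok)^\times\times(\ok/n\ok)^\times$. Modulo $3$ both $x,y\equiv1$; since $\ok^\times\to(\ok/3\ok)^\times$ is an isomorphism (both groups have order $6$ and the map is injective) while rational integers contribute only $\pm1$, the relation forces $u\in\{\pm1\}$. Modulo $2$ one has $(\ok/2\ok)^\times=\F_4^\times=\{1,\bar\omega,\bar\omega^2\}$, the rational integers are trivial, and $\bar u=1$; hence $\bar\omega^{\,i}=\bar\omega^{\,i'}$ and $i=i'$. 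Modulo $n$ the relation becomes $\bar x\,\bar y^{-1}=\pm\bar a\in\mathrm{im}(\Z/n\Z)^\times$, so $x$ and $y$ represent the same class of $(\ok/n\ok)^\times/(\Z/n\Z)^\times$; being the chosen representatives in $A_i$, they coincide.

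For surjectivity, take $\tau\in\Gal(R_{6n}/kK)$ and any $x_0\in\ok$ prime to $6n$ with $(x_0,R_{6n}/k)=\tau$. Exactly one of the six unit multiples of $x_0$, say $x$, satisfies $x\equiv1\!\pmod3$ (again since $\ok^\times\cong(\ok/3\ok)^\times$), and this $x$ satisfies $x\equiv\omega^i\!\pmod2$ for a unique $i\in\{0,1,2\}$; passing from $x_0$ to $x$ leaves the ideal, hence $\tau$, unchanged. Let $x'\in A_i$ be the representative of the class of $x$ in $(\ok/n\ok)^\times/(\Z/n\Z)^\times$. Then $\bar x\,\bar{x'}^{-1}$ is trivial modulo $6$ and rational modulo $n$, so it lies in $H$ and $(x',R_{6n}/k)=\tau$. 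Since $\tau$ fixes $\sqrt[3]{n}$ and $x'\equiv1\!\pmod3$, Lemma \ref{froboncbrt}(a) gives $\prod_i(x'/\lambda_i)_3^{c_i}=1$, whence $x'\in B_i$ maps to $\tau$.

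I expect the main obstacle to be the concrete identification of $P_\Z(6n)$ with the subgroup $H$, since this is what turns the class-field-theoretic kernel into a congruence condition on $(\ok/6n\ok)^\times$. Everything after that rests on the contrasting behaviour of $\ok^\times$ at $2$ and $3$: at $3$ the units exhaust $(\ok/3\ok)^\times$, which both normalizes the representatives to $x\equiv1\!\pmod3$ and isolates the sign $\pm1$, whereas at $2$ the units exhaust $\F_4^\times$ while the rational integers are trivial, which is exactly what pins down the index $i$; the residue modulo $n$ is absorbed entirely by the representatives $A_i$. The remaining steps are routine uses of the Chinese Remainder Theorem together with Lemma \ref{froboncbrt}(a).
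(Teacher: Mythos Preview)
Your argument is correct and follows essentially the same route as the paper: both identify the kernel of the Artin map $(\ok/6n\ok)^\times\to\Gal(R_{6n}/k)$ as the subgroup generated by $\ok^\times$ and $(\Z/6n\Z)^\times$, split via the Chinese Remainder Theorem at $2$, $3$, and $n$, exploit that $\ok^\times$ surjects onto $(\ok/3\ok)^\times$ and onto $(\ok/2\ok)^\times$ while rational integers are trivial modulo $2$, and invoke Lemma~\ref{froboncbrt}\ref{froboncbrtn} to cut down from $\Gal(R_{6n}/k)$ to $\Gal(R_{6n}/kK)$. The only difference is packaging: the paper records the kernel identification as a short exact sequence and reads off the bijection $A_0\cup A_1\cup A_2\to\Gal(R_{6n}/k)$ from the resulting group isomorphism in one stroke, whereas you verify injectivity and surjectivity by hand; your version is more explicit but carries the same content.
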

\begin{proof} There exists an exact sequence
\[1\to(\Z/6n\Z)^\times\times\ok/\{\pm1\}\to(\ok/6n\ok)^\times\to I(6n)/P_\Z(6n)\cong\Cl\Z[6n\omega]\cong\Gal(R_{6n}/k)\to1,\]
where $I(6n)$ is the group of fractional ideals of $\ok$ relatively prime to $6n$ and $P_\Z(6n)$ is the group generated by the principal ideals of elements of $\Z[6n\omega]$ relatively prime to $6n$ (see Cox \cite{Cox}, \S7).  By the Chinese Remainder Theorem, the Artin map induces an isomorphism
\[\frac{(\ok/2\ok)^\times}{(\Z/2\Z)^\times}\times\frac{(\ok/3\ok)^\times}{(\Z/3\Z)^\times\times\{1,\omega,\omega^2\}}\times\frac{(\ok/n\ok)^\times}{(\Z/n\Z)^\times}\xrightarrow{\quad\raisebox{-0.2em}{\scriptsize$\cong$}\quad}\Gal(R_{6n}/k).\]
Since $A_0\cup A_1\cup A_2$ is a set of representatives for the elements of the group on the left-hand-side of the isomorphism, the Artin map induces a bijection $A_0\cup A_1\cup A_2\to\Gal(R_{6n}/k)$.  By the computation of the action of the Artin isomorphisms on $\sqrt[3]{n}$ in Lemma \ref{froboncbrt}\ref{froboncbrtn}, we deduce that this bijection restricts to a bijection $B_0\cup B_1\cup B_2\to\Gal(R_{6n}/kK)$, as desired.
\end{proof}
We can now write \[N_{R_{6D}/kK}(X(n\omega)+1)=U(\omega)\] where $U$ is the modular function of weight $0$ for the group $\Gamma(6n)$ defined as follows.
\begin{defn} For any $x\in B_0\cup B_1\cup B_2$, we define $R(x)\coloneqq S(x)\begin{bsmallmatrix}\!n&0\!\\\!0&1\!\end{bsmallmatrix}$, $U_x(\tau)\coloneqq X(R(x)\tau)+1$, and
\[U(\tau)\coloneqq\prod_{x\in B_0\cup B_1\cup B_2} U_x(\tau).\]\end{defn}
Our next goal in the proof of Theorem \ref{fundunitcube} is to determine the zeros and poles of $U(\tau)$, which we will use to prove that $\log{\abs*{U(\omega)}}$ can be expressed as a weighted sum of non-holomorphic Eisenstein series.
\begin{lemma}\label{ordersofconjugates} Let $x=a+b\:\!\omega\in B_i$.  Suppose that $\gcd(\alpha,\beta)=1$, and define $\lambda\coloneqq\gcd(n,b\alpha+a\beta)$.  Then
\[\ord_{[\alpha/\beta]\in X(6n)}U_x(\tau)=\begin{cases}\\[-1.7777em]-2\lambda^2&\quad\text{if }3\mid\beta,\,\alpha-\beta\omega\equiv\omega^i\br\pmod{2}\\2\lambda^2&\quad\text{if }3\mid\alpha,\,\alpha-\beta\omega\equiv\omega^i\br\pmod{2}\\0&\quad\text{otherwise}\hspace{8.5em}.\vspace{-0.2777em}\end{cases}\]
\end{lemma}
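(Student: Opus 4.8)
The plan is to recognize $U_x(\tau)=X(R(x)\tau)+1$ as the composition of the weight-$0$ function $X+1$ with the integral matrix $R(x)=S(x)\begin{bsmallmatrix}n&0\\0&1\end{bsmallmatrix}$, and then to read off its order at a cusp from Proposition~\ref{matrixtransform} together with the behaviour of $X+1$ on $X(6)$ recorded in Lemma~\ref{xplusone}. Writing $q=\pm\gcd(a,b)$, $a'=a/q$, $b'=b/q$ as in the definition of $S(x)$, one checks directly that $R(x)$ is integral of determinant $n$, with second row $(b',a')$ and first row equal to $n$ times an integer vector $(u_1,u_2)$. Thus $U_x$ is modular for $\Gamma(6n)$ by Proposition~\ref{matrixtransform}\ref{gismodular}, and by Proposition~\ref{matrixtransform}\ref{orderofg} with equation~(\ref{matrixfootnote}),
\[\ord_{[\alpha/\beta]\in X(6n)}U_x(\tau)=A^{2}\,\ord_{R(x)(\alpha/\beta)\,\in X(6)}\bigl(X(\tau)+1\bigr),\qquad A=\gcd(P,Q),\]
where $P=n(u_1\alpha+u_2\beta)$ and $Q=b'\alpha+a'\beta$ are the entries of $R(x)\begin{bsmallmatrix}\alpha\\\beta\end{bsmallmatrix}$.

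First I would identify $A$ with $\lambda=\gcd(n,b\alpha+a\beta)$. Since $q^{2}$ divides the norm $a^{2}-ab+b^{2}$ and $x$ is prime to $6n$, the integer $q$ is prime to $6n$; hence $\lambda=\gcd(n,q\,Q)=\gcd(n,Q)$. Multiplying $\begin{bsmallmatrix}P\\Q\end{bsmallmatrix}$ on the left by the adjugate of $R(x)$ returns $n\begin{bsmallmatrix}\alpha\\\beta\end{bsmallmatrix}$, so $A\mid n\gcd(\alpha,\beta)=n$ and $A\mid Q$, giving $A\mid\gcd(n,Q)=\lambda$; conversely $\lambda\mid n\mid P$ and $\lambda\mid Q$ give $\lambda\mid A$. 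Hence $A=\lambda$, and $R(x)(\alpha/\beta)$ equals the cusp $[\gamma/\delta]$ with $\gamma=P/\lambda$ and $\delta=Q/\lambda$ coprime.

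It then remains to determine which case of Lemma~\ref{xplusone}(a) the cusp $[\gamma/\delta]$ lands in, i.e.\ to compute $\gamma,\delta$ modulo $2$ and $3$. Membership $x\in B_i$ forces $a\equiv1,\ b\equiv0\pmod3$, whence $a'\equiv1,\ b'\equiv0\pmod3$ and $R(x)\equiv\begin{bsmallmatrix}n&0\\0&1\end{bsmallmatrix}\pmod3$; since $n$ and $\lambda$ are prime to $3$, this gives $3\mid\gamma\iff3\mid\alpha$ and $3\mid\delta\iff3\mid\beta$. Modulo $2$, membership forces $a'+b'\omega\equiv\omega^{i}$ in $\F_4=\ok/2\ok$, and as $\lambda$ is odd one has $\delta\equiv Q=b'\alpha+a'\beta\pmod2$. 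The crux of the argument is the elementary $\F_2$-linear fact that, for the nonzero vector $(a',b')$, the kernel of the functional $(\alpha,\beta)\mapsto b'\alpha+a'\beta$ is exactly $\{(0,0),(a',b')\}$; since $(\alpha,\beta)$ is coprime, hence nonzero modulo $2$, this yields $2\mid\delta\iff(\alpha,\beta)\equiv(a',b')\pmod2\iff\alpha-\beta\omega\equiv\omega^{i}\pmod2$.

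Finally I would assemble the cases. By Lemma~\ref{xplusone}(a), $X+1$ has order $-2$ at $[\gamma/\delta]$ exactly when $6\mid\delta$, i.e.\ when $3\mid\beta$ and $\alpha-\beta\omega\equiv\omega^{i}\pmod2$; order $+2$ exactly when $3\mid\gamma$ and $2\mid\delta$, i.e.\ when $3\mid\alpha$ and $\alpha-\beta\omega\equiv\omega^{i}\pmod2$ (the two being mutually exclusive since $\gcd(\alpha,\beta)=1$); and order $0$ otherwise. Multiplying through by $A^{2}=\lambda^{2}$ gives the three stated cases. I expect the only genuinely delicate step to be the mod-$2$ identification of $2\mid\delta$ with $\alpha-\beta\omega\equiv\omega^{i}\pmod2$; the identification $A=\lambda$ and the mod-$3$ reduction are routine once the coprimality of $q$ with $6n$ is in hand.
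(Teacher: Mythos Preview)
Your proposal is correct and follows essentially the same route as the paper: both arguments invoke Proposition~\ref{matrixtransform}\ref{orderofg} with equation~(\ref{matrixfootnote}) to obtain $\ord_{[\alpha/\beta]}U_x=A^2\ord_{R(x)(\alpha/\beta)}(X+1)$, identify $A=\lambda$ via the adjugate/divisibility argument (using that $q$ is a unit mod $6n$), and then locate the image cusp in $X(6)$ to read off the order from Lemma~\ref{xplusone}. The only cosmetic difference is in this last step: the paper treats $i=0,1,2$ by explicit casework, writing down concrete representatives $[n\alpha/\beta]$, $[n(\alpha-3\beta)/(3\alpha+4\beta)]$, $[n(-2\alpha-3\beta)/(3\alpha+\beta)]$ for $R(x)(\alpha/\beta)$ in $X(6)$, whereas you reduce $R(x)$ modulo $3$ and modulo $2$ uniformly and identify the conditions $3\mid\gamma$, $3\mid\delta$, $2\mid\delta$ directly---your $\F_2$ kernel observation is exactly what the paper's casework encodes.
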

\begin{proof} Let us write $S(x)=\begin{bsmallmatrix}\!u&vn\!\\\!w/n&z\!\end{bsmallmatrix}$; then $u,v,w$, and $z$ are integers such that $uz-vw=1$, $w=b/q$, and $z=a/q$.  Since $\gcd(un\alpha+vn\beta, w\alpha+z\beta)\mid n\gcd(u\alpha+v\beta, w\alpha+z\beta)=n\gcd(\alpha,\beta)=n$, we find that
\[\gcd(un\alpha+vn\beta, w\alpha+z\beta)=\gcd(n,w\alpha+z\beta)=\gcd(n,b\alpha+a\beta)=\lambda.\]
By Lemma \ref{matrixtransform}\ref{orderofg} and equation (\ref{matrixfootnote}), we deduce that
\[\ord_{[\alpha/\beta]\in X(6n)}U_x(\tau)=\lambda^2\ord_{[R(x)(\alpha/\beta)]\in X(6)}(X(\tau)+1).\]
By the definition of $S(x)$, we see that if $x\in B_0$, $[R(x)(\alpha/\beta)]\in X(6)$ is equivalent to $[n\alpha/\beta]$; if $x\in B_1$, $[R(x)(\alpha/\beta)]\in X(6)$ is equivalent to $[n(\alpha-3\beta)/(3\alpha+4\beta)]$; and if $x\in B_2$, $[R(x)(\alpha/\beta)]\in X(6)$ is equivalent to $[n(-2\alpha-3\beta)/(3\alpha+\beta)]$.  The lemma then follows from the fact that $X(\tau)+1$ has a double pole at $\infty\in X(6)$, a double zero at $[3/2]\in X(6)$, and no other zeros or poles in $X(6)$.
\end{proof}
\noindent To compute the orders of $U(\tau)$ at the zeros and poles, we need to make the following definition.
\begin{defn}For any relatively prime integers $\alpha$ and $\beta$, and any divisor $d\mid n$, choose $i\in\{0,1,2\}$ such that $\alpha-\beta\omega\equiv\omega^i\!\pmod{2}$.  We then define the three quantities
\begin{align*} c_{\alpha,\beta}(d)&\coloneqq\abs*{\{x=a+b\:\!\omega\in B_i: d\mid a\beta+b\alpha\}},\\f(d)&\coloneqq\abs*{\frac{(\ok/d\ok)^\times}{(\Z/d\Z)^\times}}=d\prod_{p\mid d}\Big[1-p^{-1}\Big(\frac{p}{3}\Big)\Big],\\\delta&\coloneqq\gcd(n, \alpha^2+\alpha\beta+\beta^2).
\end{align*}
\end{defn}
\begin{lemma}\label{divisorcoeff} Suppose that $\gcd(\alpha,\beta)=1$. Let $n'$ be the largest squarefree divisor of $n$, and suppose that $3\nmid\nu_p(n)$ for all prime numbers $p\mid n$.  Let $d$ be any divisor of $n$.
\begin{enumerate}[label=\textup{(\alph*)}]
\item If $n'\mid d$,
\[c_{\alpha,\beta}(d)=\begin{cases}\\[-1.7777em]f(n)/f(d)&\quad\text{if }\prod_i\big(\!\frac{\alpha-\beta\omega}{\lambda_i}\!\big)_3^{c_i}=1\\0&\quad\text{otherwise}\hspace{4.1667em}.\vspace{-0.2777em}\end{cases}\]
\item If $n'\nmid d$,
\[c_{\alpha,\beta}(d)=\begin{cases}\\[-1.7777em]f(n)/3f(d)&\quad\text{if }\gcd(d,\delta)=1\\0&\quad\text{otherwise}\hspace{2.6667em}.\vspace{-0.2777em}\end{cases}\]
\end{enumerate}
\end{lemma}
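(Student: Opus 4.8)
The plan is to count the set defining $c_{\alpha,\beta}(d)$ as a character sum over the finite abelian group $G\coloneqq(\ok/n\ok)^\times/(\Z/n\Z)^\times$, which has order $f(n)$. Since $d$ is coprime to $6$, the conditions modulo $2$ and $3$ entering the definition of $B_i$ only serve to fix a transversal of $G$, so $c_{\alpha,\beta}(d)$ equals the number of $x=a+b\omega\in G$ with $d\mid a\beta+b\alpha$ and $\chi(x)=1$, where $\chi(x)\coloneqq\prod_i\big(\frac{x}{\lambda_i}\big)_3^{c_i}$. Detecting the last condition by orthogonality, $[\chi(x)=1]=\frac13\sum_{j=0}^2\chi(x)^j$, reduces the problem to evaluating
\[c_{\alpha,\beta}(d)=\frac13\sum_{j=0}^2 N_j,\qquad N_j\coloneqq\sum_{\substack{x\in G\\ d\,\mid\,a\beta+b\alpha}}\chi(x)^j.\]
The first step is to make the linear condition $d\mid a\beta+b\alpha$ multiplicative. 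Putting $\theta\coloneqq(\alpha+\beta)+\beta\omega$, a direct computation gives $\bar\theta=\alpha-\beta\omega$, $N\theta=\alpha^2+\alpha\beta+\beta^2$, and the key fact that $a\beta+b\alpha$ is exactly the coefficient of $\omega$ in the product $x\theta$; moreover $\theta$ is primitive because $\gcd(\alpha+\beta,\beta)=\gcd(\alpha,\beta)=1$. Thus $\delta=\gcd(n,N\theta)$ and the divisibility condition says that $d$ divides the $\omega$-part of $x\theta$.

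Next I would split everything through the Chinese Remainder Theorem. Writing $n=\prod_p p^{e_p}$ and $d=\prod_p p^{g_p}$ with $0\le g_p\le e_p$, the group $G$, the character $\chi=\prod_p\chi_p$, and the divisibility all factor over the primes $p\mid n$; since $\gcd(n,6)=1$, each such $p$ is split or inert in $\ok$ according as $\big(\frac p3\big)=\pm1$. Hence $N_j=\prod_{p\mid n}N_j^{(p)}$, with local factors $N_j^{(p)}\coloneqq\sum\chi_p(x)^j$, the sum over $x\in G_p\coloneqq(\ok/p^{e_p})^\times/(\Z/p^{e_p})^\times$ (of order $f(p^{e_p})$) subject to $p^{g_p}\mid a\beta+b\alpha$. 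For $j=0$ and $\theta$ a unit at $p$, a short count in CRT coordinates — using that the kernel of $(\Z/p^{e_p})^\times\to(\Z/p^{g_p})^\times$ has order $p^{e_p-g_p}$ — gives $N_0^{(p)}=f(p^{e_p})/f(p^{g_p})$ uniformly in the split and inert cases. When $p\mid\delta$ (necessarily a split prime, as primitivity of $\theta$ forbids an inert $p\mid N\theta$) and $g_p\ge1$, the two CRT components $x_1\theta_1$ and $x_2\theta_2$ have distinct $p$-adic valuations, so the congruence $p^{g_p}\mid a\beta+b\alpha$ admits no unit solution and $N_0^{(p)}=0$. Taking products, $N_0=f(n)/f(d)$ if $\gcd(d,\delta)=1$ and $N_0=0$ otherwise.

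The dichotomy between (a) and (b) now comes entirely from the character sums $N_1,N_2$. The crucial observation is that, because $3\nmid e_p$, each $\chi_p^j$ with $j\in\{1,2\}$ is a \emph{nontrivial} cubic character of $G_p$: for split $p$ it is $\psi^{e_pj}$ applied to the ratio of the two CRT coordinates (where $\psi$ is a cubic character modulo $p^{e_p}$), and for inert $p$ it is trivial on $\Z/p^{e_p}$ but nontrivial on $G_p$. Consequently, whenever $p\mid n$ but $p\nmid d$ (so $g_p=0$ and the local sum is unconstrained), $N_j^{(p)}=\sum_{G_p}\chi_p^j=0$. In case (b), where $n'\nmid d$, such a prime exists, forcing $N_1=N_2=0$ and hence $c_{\alpha,\beta}(d)=\frac13 N_0$, which is $f(n)/3f(d)$ if $\gcd(d,\delta)=1$ and $0$ otherwise — exactly the claim. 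In case (a), where $n'\mid d$ so every prime of $n$ divides $d$, I would instead run the constrained local count with the extra weight $\chi_p^j$; the same computation as for $N_0^{(p)}$ now yields $N_j^{(p)}=\chi_p(\alpha-\beta\omega)^j\,N_0^{(p)}$, so that $N_j=\chi(\alpha-\beta\omega)^j N_0$. Substituting,
\[c_{\alpha,\beta}(d)=\tfrac13 N_0\big(1+\chi(\alpha-\beta\omega)+\chi(\alpha-\beta\omega)^2\big),\]
which equals $f(n)/f(d)$ when $\chi(\alpha-\beta\omega)=1$ and $0$ otherwise; note that $\chi(\alpha-\beta\omega)=1$ already forces $\alpha-\beta\omega$ to be coprime to $n$, hence $\delta=1$, so the $\gcd(d,\delta)$ constraint is automatically absorbed.

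The main obstacle is the uniform evaluation of the local sums across split and inert primes. The inert case is the more delicate: there the condition ``$p^{g_p}$ divides the $\omega$-part of $x\theta$'' must be recast as ``$x\theta$ is rational modulo $p^{g_p}$,'' which I would analyze through the norm-one torus $x\mapsto x/\bar x$ on $(\ok/p^{g_p})^\times$, and the valuation bookkeeping pinning down exactly when $N_0^{(p)}$ vanishes rests squarely on the primitivity of $\theta$. A prerequisite I would dispatch first is that $\chi$ is genuinely well defined on $G$, i.e.\ trivial on rational integers: for split $p$ this follows from $\big(\frac m{\bar\lambda}\big)_3=\overline{\big(\frac m\lambda\big)_3}$, and for inert $p$ from $\big(\frac mp\big)_3=m^{(p^2-1)/3}=1$ for $m\in\F_p^\times$.
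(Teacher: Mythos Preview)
Your proof is correct. The paper takes a shorter, more direct route: it observes at the outset that since $\gcd(\alpha,\beta)=1$, the condition $d\mid a\beta+b\alpha$ is equivalent to $(a,b)\equiv c(\alpha,-\beta)\pmod d$ for some integer $c$ coprime to $d$, i.e.\ $x\equiv c(\alpha-\beta\omega)\pmod d$. This exhibits the constrained set as a single fibre of the surjection $G\to G_d\coloneqq(\ok/d\ok)^\times/(\Z/d\Z)^\times$, of size $f(n)/f(d)$ when $\alpha-\beta\omega$ is a unit mod $d$ and empty otherwise (the latter occurring exactly when $\gcd(d,\delta)>1$). The $B_i$-condition is then dispatched by noting that $\chi(x)=\chi(\alpha-\beta\omega)$ is \emph{constant} on this fibre when $n'\mid d$, while $\chi$ equidistributes over $\{1,\omega,\omega^2\}$ as soon as some prime of $n$ is missing from $d$, using $3\nmid c_j$. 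Your argument reaches the same conclusions through more machinery: the auxiliary element $\theta$ and the multiplicative reinterpretation of the linear constraint, orthogonality of cubic characters to detect $\chi(x)=1$, and a full CRT localisation with separate split/inert bookkeeping (the norm-one torus picture in the inert case becomes unnecessary once one has the description $x\equiv c(\alpha-\beta\omega)$). What your approach buys is a uniform multiplicative template that would scale cleanly to more elaborate character-sum computations; what it costs here is length, since the paper's three-line coset argument already delivers the count.
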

\begin{proof} Since $\gcd(\alpha,\beta)=1$, $d\mid a\beta+b\alpha$ if and only if $(a,b)\equiv c(\alpha,-\beta)\!\pmod{d}$ for some integer $c$ relatively prime to $d$.  Now note that there are $f(n)/f(d)$ residue classes $x\!\pmod{n}$ (for $x\in\ok$) that are congruent to $\alpha-\beta\omega\!\pmod{d}$, and hence there are $f(n)/f(d)$ elements of $A_i$ congruent to $c(\alpha-\beta\omega)\!\pmod{d}$ for some integer $c$ relatively prime to $d$.  If $n'\mid d$, any $x\in A_i$ such that $x\equiv c(\alpha-\beta\omega)\!\pmod{d}$ is an element of $B_i$ if and only if $\prod_i\big(\!\frac{x}{\lambda_i}\!\big)_3^{c_i}=\prod_i\big(\!\frac{\alpha-\beta\omega}{\lambda_i}\!\big)_3^{c_i}=1$; hence under this condition, $c_{\alpha,\beta}(d)=f(n)/f(d)$, else $c_{\alpha,\beta}(d)=0$.  If $n'\nmid d$ and $\gcd(d,\delta)=1$, we see that for any $\lambda_j\mid n$ that does not also divide $d$, $\big(\!\frac{x}{\lambda_j}\!\big)_3^{c_j}=1$, $\omega$, or $\omega^2$, with each value occurring for exactly $1/3$ of the residues modulo $N\lambda_j$; hence $c_{\alpha,\beta}(d)=f(n)/3f(d)$.  If $\gcd(d,\delta)\ne 1$, there exists some $\lambda_j$ dividing both $d$ and $\alpha-\beta\omega$.  Because $\lambda_j\nmid x$ for all $x\in B_i$, we conclude that $c_{\alpha,\beta}(d)=0$.
\end{proof}
\noindent Next, we define the expression
\[C(\alpha,\beta)\coloneqq\!\sum_{\substack{d\mid n\\\gcd(d,\delta)=1}}\br d^2\frac{f(n)}{3f(d)}\prod_{p\mid d}\big(1-p^{-2}\big)+\frac{2}{3}\Re\prod_{i}\bigg(\!\frac{\alpha-\beta\omega}{\lambda_i}\!\bigg)_{\!3}^{\!c_i}\sum_{n'\mid d\mid n}dn\prod_{p\mid d}\big(1-p^{-2}\big)\,.\]
\begin{prop} Let $n'$ be the largest squarefree divisor of $n$, and suppose that $3\nmid\nu_p(n)$ for all prime numbers $p\mid n$.  If $\gcd(\alpha,\beta)=1$, then
\[\ord_{[\alpha/\beta]\in X(6n)}U(\tau)=2wC(\alpha,\beta),\]
where $w=1$ if $3\mid\alpha$, $w=-1$ if $3\mid\beta$, and $w=0$ otherwise.
\end{prop}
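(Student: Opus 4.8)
The plan is to reduce $\ord_{[\alpha/\beta]}U$ to a divisor sum and evaluate it with Lemma \ref{divisorcoeff}. Since $U=\prod_{x\in B_0\cup B_1\cup B_2}U_x$ and orders are additive over products, $\ord_{[\alpha/\beta]}U=\sum_x\ord_{[\alpha/\beta]}U_x$. Because $\gcd(\alpha,\beta)=1$, the element $\alpha-\beta\omega$ is a unit modulo $2$, so there is a unique $i_0\in\{0,1,2\}$ with $\alpha-\beta\omega\equiv\omega^{i_0}\!\pmod2$; by Lemma \ref{ordersofconjugates} only the factors $U_x$ with $x\in B_{i_0}$ have nonzero order, and for such $x=a+b\:\!\omega$ that order equals $2w\gcd(n,b\alpha+a\beta)^2$ with $w$ as in the statement. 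In particular, if $w=0$ every term vanishes and the claimed identity holds trivially, so we may assume $w=\pm1$ and are reduced to proving $\sum_{x\in B_{i_0}}\gcd(n,b\alpha+a\beta)^2=C(\alpha,\beta)$.

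Next I would expand each squared gcd using the elementary identity $m^2=\sum_{d\mid m}d^2\prod_{p\mid d}(1-p^{-2})$ (the summand is the Jordan totient, and the identity is standard M\"obius inversion). Applying this with $m=\gcd(n,b\alpha+a\beta)$ and interchanging the order of summation gives
\[\sum_{x\in B_{i_0}}\gcd(n,b\alpha+a\beta)^2=\sum_{d\mid n}d^2\prod_{p\mid d}\big(1-p^{-2}\big)\,\big|\{x\in B_{i_0}:d\mid b\alpha+a\beta\}\big|=\sum_{d\mid n}d^2\prod_{p\mid d}\big(1-p^{-2}\big)\,c_{\alpha,\beta}(d),\]
since the condition $d\mid b\alpha+a\beta$ is exactly the one defining $c_{\alpha,\beta}(d)$ for the residue $i_0$.

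The final and most delicate step is to evaluate this sum through Lemma \ref{divisorcoeff} and match it against the closed form $C(\alpha,\beta)$. Write $\chi\coloneqq\prod_i\big(\!\frac{\alpha-\beta\omega}{\lambda_i}\!\big)_{\!3}^{\!c_i}$ and recall $\delta=\gcd(n,\alpha^2+\alpha\beta+\beta^2)=\gcd(n,N(\alpha-\beta\omega))$. For divisors with $n'\nmid d$, Lemma \ref{divisorcoeff}(b) contributes $d^2\prod_{p\mid d}(1-p^{-2})\,f(n)/3f(d)$ exactly when $\gcd(d,\delta)=1$; for $n'\mid d$, Lemma \ref{divisorcoeff}(a) contributes $d^2\prod_{p\mid d}(1-p^{-2})\,f(n)/f(d)$ precisely when $\chi=1$, and for such $d$ the equality of radicals $\mathrm{rad}(d)=\mathrm{rad}(n)=n'$ gives $f(n)/f(d)=n/d$, so this contribution simplifies to $dn\prod_{p\mid d}(1-p^{-2})$. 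I would then split into the cases $\delta=1$ and $\delta\ne1$.

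The reconciliation hinges on the observation that $\delta=1$ forces every factor of $\chi$ to be nonzero, hence $\chi\in\{1,\omega,\omega^2\}$, whereas $\delta\ne1$ forces some split prime of $n$ to divide $\alpha-\beta\omega$, hence $\chi=0$ and $\Re\chi=0$; moreover, when $\delta\ne1$ the constraint $\gcd(d,\delta)=1$ already excludes every $d$ with $n'\mid d$ (such a $d$ is divisible by all primes of $n$, in particular those of $\delta$). Thus the prefactor $\tfrac23\Re\chi$, which equals $\tfrac23$, $-\tfrac13$, or $0$ according as $\chi=1$, $\chi\in\{\omega,\omega^2\}$, or $\chi=0$, is exactly what is needed: when $\chi=1$ the $n'\mid d$ terms combine with total coefficient $\tfrac13+\tfrac23=1$, reproducing the full $f(n)/f(d)$ weight of Lemma \ref{divisorcoeff}(a); when $\chi\in\{\omega,\omega^2\}$ they cancel via $\tfrac13-\tfrac13=0$; and when $\chi=0$ they are already absent, while the first sum of $C(\alpha,\beta)$ reduces to the $n'\nmid d$, $\gcd(d,\delta)=1$ range. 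The main obstacle is precisely this bookkeeping—verifying that the single expression $C(\alpha,\beta)$ correctly interpolates the three separate cases of Lemma \ref{divisorcoeff}—which relies on the norm identity $\alpha^2+\alpha\beta+\beta^2=N(\alpha-\beta\omega)$ and on tracking how the vanishing of a cubic residue symbol is detected by $\delta$.
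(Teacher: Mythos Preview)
Your argument is correct and coincides with the paper's own proof. Both reduce to the single index $i_0$ via Lemma \ref{ordersofconjugates}, pass from $\sum_{x\in B_{i_0}}\gcd(n,b\alpha+a\beta)^2$ to the divisor sum $\sum_{d\mid n}d^2\prod_{p\mid d}(1-p^{-2})\,c_{\alpha,\beta}(d)$ (the paper groups by the exact gcd and then converts, you use the Jordan-totient identity $m^2=\sum_{d\mid m}J_2(d)$ directly; these are equivalent), and finish by invoking Lemma \ref{divisorcoeff} together with $f(n)/f(d)=n/d$ for $n'\mid d$. Your case analysis on $\chi\in\{1,\omega,\omega^2,0\}$ spells out in full what the paper compresses into the single sentence ``so the result follows from Lemma \ref{divisorcoeff}.''
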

\begin{proof} Note that if $i\in\{0,1,2\}$ such that $\alpha-\beta\omega\equiv\omega^i\pmod{2}$, by Lemma \ref{ordersofconjugates},
\begin{align*}\ord_{[\alpha/\beta]\in X(6n)}U(\tau)&=\sum_{x\in B_i}\ord_{[\alpha/\beta]\in X(6n)}U_x(\tau)\\&=2w\sum_{d\mid n}d^2\:\!\abs*{\{x=a+b\:\!\omega\in B_i:d=\gcd(n,a\beta+b\alpha)\}}\\&=2w\sum_{d\mid n}d^2c_{\alpha,\beta}(d)\prod_{p\mid d}\big(1-p^{-2}\big),\end{align*}
so the result follows from Lemma \ref{divisorcoeff} (since $f(n)/f(d)=n/d$ when $n'\mid d$).\end{proof}
We can now use our computation of the orders of $U(\tau)$ at the cusps of $X(6n)$ to define an appropriate weighted sum of non-holomorphic Eisenstein series (in the sense of Iwaniec, \cite{I2}, Ch.\ 3).
\begin{defn} Let us define the function $F\colon \uhp\times\{s\in\C:\Re s>1\}\to\C$ as the weighted sum of non-holomorphic Eisenstein series
\begin{align*}F(\tau,s)&\coloneqq\frac{2\pi}{3n}\sum_{\gamma=\begin{bsmallmatrix}\!a&b\!\\\!c&d\!\end{bsmallmatrix}\in\langle T^3\rangle\backslash\Gamma(3)}C(d,-c)(\Im\gamma\tau)^s-C(c,d)(\Im\gamma S\tau)^s
\\&=2\pi(3n)^{s-1}\sum_{\substack{1\,\le\,a,\,b\,\le\,3n\\3\mid a,b-1\\\gcd(a,\,b,\,n)=1}}C(b,-a)E_{[-b/a]\in X(3n)}(\tau,s)-C(a,b)E_{[a/b]\in X(3n)}(\tau,s).\end{align*}
\end{defn}
\begin{lemma}\label{Fisharmonic} The function $\lim_{s\to 1}F(\tau,s)$ is a harmonic function on $\uhp$, and is invariant under the left action on $\uhp$ by $\Gamma(3n)$.
\end{lemma}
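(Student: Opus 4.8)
The plan is to regard $F(\tau,s)$, for $\Re s>1$, as a finite $\C$-linear combination of the translates $\tau\mapsto(\Im g\tau)^s$ with $g\in\{\gamma,\gamma S\}$ running over the coset representatives, and to isolate the one genuine obstruction—the possible pole at $s=1$—since both asserted properties become nearly formal once the limit is known to exist. The invariance is immediate from the second expression for $F$: each summand is a non-holomorphic Eisenstein series $E_{t\in X(3n)}(\tau,s)$, and every such series is $\Gamma(3n)$-invariant, because replacing $\tau$ by $\delta\tau$ with $\delta\in\Gamma(3n)$ merely permutes the cosets $\langle T^{3n}\rangle\backslash\Gamma(3n)$ in the defining sum. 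Hence $F(\delta\tau,s)=F(\tau,s)$ for all $\delta\in\Gamma(3n)$ and $\Re s>1$, and this persists under analytic continuation and in the limit $s\to1$.

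For harmonicity I use that $\Delta(\Im\tau)^s=s(s-1)(\Im\tau)^s$ and that $\Delta$ is $\SL_2\R$-invariant, so each term $(\Im g\tau)^s$ is an eigenfunction of $\Delta$ with eigenvalue $s(s-1)$; consequently $\Delta F(\tau,s)=s(s-1)F(\tau,s)$ for $\Re s>1$, and by the analytic continuation furnished by Propositions \ref{phiisholomorphic} and \ref{eisenstein} (which is term-by-term on the Fourier expansion, hence smooth in $\tau$) the identity persists wherever $F(\tau,s)$ is regular in $s$. Granting that $\lim_{s\to1}F(\tau,s)$ exists as a smooth function of $\tau$, I then apply $\Delta$ and pass to the limit: since $s(s-1)\to0$ while $F(\tau,s)$ stays finite, $\Delta F(\tau,1)=0$, so the limit is harmonic (equivalently, harmonic for the Euclidean Laplacian, as the two differ by the positive factor $y^2$).

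The main obstacle is therefore to show that the pole cancels. By Proposition \ref{phiisholomorphic} the non-constant Fourier coefficients of each Eisenstein series are holomorphic for $\Re s>1/2$, so by Proposition \ref{eisenstein} the only possible pole of $F$ at $s=1$ lives in the constant term, with residue a $\tau$-independent constant equal to a fixed multiple of
\[\sum_{(\alpha,\beta)\,\equiv\,(1,0)}C(\alpha,\beta)-\sum_{(\alpha,\beta)\,\equiv\,(0,1)}C(\alpha,\beta),\]
the sums taken over primitive residues $(\alpha,\beta)$ modulo $3n$ lying in the indicated classes modulo $3$ (the class $\equiv(1,0)$ arises from the weights $C(b,-a)$ on the cusps $[-b/a]$, and $\equiv(0,1)$ from the weights $C(a,b)$ on the cusps $[a/b]$). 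I would show this difference vanishes by splitting $C$ into its two pieces. The first piece depends on $(\alpha,\beta)$ only through $\delta=\gcd(n,\alpha^2+\alpha\beta+\beta^2)$, which is symmetric in $\alpha,\beta$; since the involution $(\alpha,\beta)\mapsto(\beta,\alpha)$ carries the primitive residues $\equiv(1,0)$ bijectively onto those $\equiv(0,1)$ and preserves $\delta$, this piece contributes equally to both sums. The second piece is a fixed constant times $\Re\prod_i\left(\frac{\alpha-\beta\omega}{\lambda_i}\right)_{3}^{c_i}$, and here the key point is that $x\mapsto\prod_i\left(\frac{x}{\lambda_i}\right)_{3}^{c_i}$ is a nontrivial cubic character of $(\ok/n\ok)^\times$—nontrivial precisely because the hypotheses ($3\nmid\nu_p(n)$ for all $p$, and $n\not\equiv\pm1\pmod{9}$ so $n>1$) force $n$ not to be a cube in $k=\Q(\omega)$. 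By the Chinese Remainder Theorem the sum over either class factors through this character on $(\ok/n\ok)^\times$; the terms with $\gcd(\alpha-\beta\omega,n)\ne1$ vanish identically and the remaining terms sum to zero by orthogonality. Thus the second piece contributes nothing to either sum, the displayed difference is zero, the residue vanishes, and $F(\tau,s)$ is holomorphic at $s=1$, supplying the existence of the limit granted above.
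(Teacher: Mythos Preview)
Your proof is correct and follows the same overall scaffolding as the paper: invariance from the $\Gamma(3n)$-invariance of each Eisenstein series, harmonicity from the eigenvalue equation $\Delta E=s(s-1)E$ together with $\SL_2\R$-invariance of $\Delta$, and existence of the limit via the Fourier expansion furnished by Propositions~\ref{phiisholomorphic} and~\ref{eisenstein}. The paper's own proof of this lemma is very terse on the last point---it says only that the Fourier coefficients converge ``for the lowest order term from properties of the Riemann zeta function,'' without explaining why the pole of $\zeta(2s-1)$ at $s=1$ disappears. That cancellation is only made visible later, in the proof of Theorem~\ref{harmonicfns}, where the bijection $(a,b)\mapsto(b^*,-a^*)$ between the two families of cusps is used to match the leading constant terms.

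Your treatment of the residue is more explicit and self-contained: you split $C(\alpha,\beta)$ into its two summands and kill each separately, the first by the involution $(\alpha,\beta)\mapsto(\beta,\alpha)$ (which preserves $\delta=\gcd(n,\alpha^2+\alpha\beta+\beta^2)$ and exchanges the two residue classes modulo $3$), the second by orthogonality of the nontrivial cubic character $\chi$ on $(\ok/n\ok)^\times$. This is a genuinely cleaner way to see why $\lim_{s\to1}F(\tau,s)$ exists before one embarks on the full computation of Theorem~\ref{harmonicfns}. One small point worth making explicit: your assertion that the residue is a fixed multiple of the \emph{unweighted} difference $\sum_{\equiv(1,0)}C-\sum_{\equiv(0,1)}C$ relies on the $\lambda$-dependent factor $\lambda^{1-2s}\phi(N/\lambda)^{-1}\prod_{p\mid N/\lambda}(1-p^{1-2s})$ in Proposition~\ref{eisenstein} collapsing to the common value $1/N$ at $s=1$; this follows at once from $\phi(N/\lambda)=(N/\lambda)\prod_{p\mid N/\lambda}(1-p^{-1})$, but a one-line remark would make the step airtight.
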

\begin{proof} To prove that $F(\tau,s)$ converges uniformly on the upper half-plane in the limit $s\to1$, we may simply use the Fourier expansion for $F(\tau,s)$ provided in Proposition \ref{eisenstein}, and note that the Fourier coefficients converge in the limit $s\to1$ (this follows for the lowest order term from properties of the Riemann zeta function, and for higher order terms from Proposition \ref{phiisholomorphic}).  The fact that $\lim_{s\to1}F(\tau,s)$ is invariant under the left action of $\uhp$ by $\Gamma(3n)$ follows from the fact that the non-holomorphic Eisenstein series for the group $\Gamma(3n)$ are evidently invariant under that action.  Finally, since the Eisenstein series $E_{t\in X(N)}(\tau,s)$ is an eigenfunction of the hyperbolic Laplacian $\Delta$ with eigenvalue $s(1-s)$ (see \cite{I2}), we conclude that $\lim_{s\to1}F(\tau,s)$ is a harmonic function, as desired.
\end{proof}
\begin{thm}\label{harmonicfns} Let $n$ be a positive integer relatively prime to $6$ such that $n\not\equiv\pm1\!\pmod{9}$ and no exponent in the prime factorization of $n$ is a multiple of $3$. 
 For all $\tau\in\uhp$,
\[\log{\abs*{U(\tau)}}=\lim_{s\to1}F(\tau,s)+\frac{f(n)\log 3}{2}.\]
\end{thm}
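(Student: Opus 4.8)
The plan is to prove that the difference
\[ h(\tau)\coloneqq\log\abs{U(\tau)}-\lim_{s\to1}F(\tau,s)-\tfrac{1}{2}f(n)\log 3 \]
is a bounded harmonic function on the compact modular curve $X(6n)$, conclude that it is constant, and finally evaluate that constant to be $0$ by comparing Fourier coefficients at a single cusp. First I would verify that $h$ is harmonic and $\Gamma(6n)$-invariant on $\uhp$. Each factor $U_x(\tau)=X(R(x)\tau)+1$ is holomorphic and nonvanishing there: the matrix $R(x)$ has positive determinant $n$, so the associated Möbius transformation preserves $\uhp$, and by Lemma~\ref{xplusone}(a) the function $X(\tau)+1$ has no zeros or poles in $\uhp$. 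Hence $\log\abs{U}=\sum_x\log\abs{U_x}$ is harmonic, and it is $\Gamma(6n)$-invariant because $U$ is a weight-$0$ modular function for $\Gamma(6n)$. By Lemma~\ref{Fisharmonic}, $\lim_{s\to1}F(\tau,s)$ is harmonic and invariant under $\Gamma(3n)\supseteq\Gamma(6n)$, so $h$ descends to a harmonic function on $\Gamma(6n)\backslash\uhp$.

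Since $X(6n)$ is obtained from $\Gamma(6n)\backslash\uhp$ by adjoining finitely many cusps, it suffices to show that $h$ is bounded near each cusp: a harmonic function bounded near an isolated puncture extends across it, and a harmonic function on the compact surface $X(6n)$ is constant by the maximum principle. Near a cusp $c=[\alpha/\beta]$, fix $\gamma_c\in\SL_2\Z$ with $\gamma_c\infty=c$ and set $\tau=\gamma_c\tau'$. In the width-$6n$ local parameter $q_{6n}=e^{2\pi i\tau'/(6n)}$ the Fourier expansion of $U$ gives
\[ \log\abs{U(\gamma_c\tau')}=-\frac{2\pi}{6n}\bigl(\ord_{c}U\bigr)\,\Im\tau'+\log\abs{a_c}+o(1)\qquad(\Im\tau'\to\infty), \]
where $a_c$ is the leading Fourier coefficient. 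On the other side I would substitute the Fourier expansions of Theorem~\ref{nhfourierexp} into the definition of $F$. The only linear-in-$\Im\tau'$ contribution comes from the terms $\delta_{t,c}(3n)^{-s}(\Im\tau')^{s}$, which at $s=1$ yield a linear term whose coefficient is $2\pi(3n)^{-1}$ times the net Eisenstein weight attached to the cusp $c$. Because the orders satisfy $\ord_{c}U=2w\,C(\alpha,\beta)$ (with $w=\pm1$ exactly when $3\mid\alpha$ or $3\mid\beta$), and $F$ carries weights $+C(b,-a)$ and $-C(a,b)$ on the series $E_{[-b/a]}$ and $E_{[a/b]}$, these two linear terms cancel identically in $h$.

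The main obstacle is controlling the remaining terms near each cusp. The constant-in-$\Im\tau'$ contributions to $F$ come from the terms $\varphi_{tu}(s)(3n)^{1-s}(\Im\tau')^{1-s}$, and each $\varphi_{tu}(s)$ has a simple pole at $s=1$ arising from $\zeta(2s-1)$ (cf. Proposition~\ref{eisenstein}). Using the uniform convergence established in Lemma~\ref{Fisharmonic}, I would show that in the weighted combination defining $F$ these residues cancel; then expanding $(\Im\tau')^{1-s}=1-(s-1)\log\Im\tau'+\cdots$ shows that the potentially unbounded $\log\Im\tau'$ terms carry the same (now vanishing) weighted residue as coefficient, so they cancel as well, leaving only a finite constant. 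This establishes that $h$ is bounded near every cusp, hence equal to a constant $c_0$.

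To evaluate $c_0$ I would let $\Im\tau\to\infty$ at the cusp $\infty$ and compare constant terms. The constant term of $\log\abs{U}$ is $\sum_x\log\abs{a_{x,\infty}}$, and by Lemma~\ref{xplusone}(b) together with Proposition~\ref{matrixtransform}\ref{gleadingcoeff} each $\log\abs{a_{x,\infty}}$ equals $0$, $\tfrac12\log3$, or $\log3$; the constant term of $\lim_{s\to1}F$ is the finite part of the $\varphi$-contributions read off from Proposition~\ref{eisenstein}. Counting the factors of each type as $x$ ranges over $B_0\cup B_1\cup B_2$ (via $\#B_i$ and $f(n)$) shows that the net $\log 3$-defect is precisely $\tfrac12 f(n)\log3$, forcing $c_0=0$ and yielding the stated identity. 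The two delicate points, and where I expect the real work to lie, are the cancellation of the $\varphi_{tu}$-poles together with their attendant $\log\Im\tau'$ terms, and the exact bookkeeping of the leading-coefficient defect that produces the constant $\tfrac12 f(n)\log3$.
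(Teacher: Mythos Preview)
Your overall strategy matches the paper's exactly: show the difference is a bounded harmonic function on $X(6n)$, hence constant, then evaluate the constant at a single cusp. Your analysis of harmonicity, $\Gamma(6n)$-invariance, the cancellation of the linear-in-$\Im\tau'$ terms, and the vanishing of the weighted residues (hence of the $\log\Im\tau'$ terms) is all correct.

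The gap is in your evaluation of $c_0$. You write that ``counting the factors of each type as $x$ ranges over $B_0\cup B_1\cup B_2$ \dots\ shows that the net $\log 3$-defect is precisely $\tfrac12 f(n)\log3$,'' suggesting the constant arises from the leading coefficients of the $U_x$ at $\infty$. It does not. At the cusp $\infty=[1/0]$ every image cusp $R(x)\infty\in X(6)$ has denominator divisible by $3$ (check the three cases in the proof of Lemma~\ref{ordersofconjugates}), so by Lemma~\ref{xplusone}(b) every leading coefficient has absolute value $1$, and the constant term of $\log\abs{U}$ at $\infty$ is $0$. The entire $\tfrac12 f(n)\log 3$ must therefore come from the constant term of $\lim_{s\to1}F(\tau,s)$ at $\infty$, and this is where the real computation lies. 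The paper extracts it by expanding the $(\Im\tau)^{1-s}$ term of Proposition~\ref{eisenstein} to first order in $(1-s)$, then exploiting a symmetry of the weights: for each pair $(a,b)$ with $3\mid a$, $3\mid b-1$ there is a companion $(a^*,b^*)$ with $3\mid b^*$, $3\mid a^*+1$, $a^*\equiv a$, $b^*\equiv b\pmod n$, so that $C(a^*,b^*)=C(a,b)$ but $\lambda_{b^*}=3\lambda_b$. Pairing the $(a,b)$ term in the first sum with the $(b^*,-a^*)$ term in the second collapses everything to a single factor of $\log 3$ times $\sum C(a,b)$; that sum is then identified with $\tfrac12\deg U=\abs{B_0}\,[\Gamma(6):\Gamma(6n)]$, which is where $f(n)$ finally enters. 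Without this pairing trick and the degree identification, the constant cannot be read off by ``counting factors.''
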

\begin{proof} We will prove that the harmonic functions on both sides of this equation are equal by showing that their difference is a bounded harmonic function on $\uhp^*$, which must be constant.  For any cusp $[\alpha/\beta]\in X(6n)$, choose a matrix $\gamma\in\SL_2\Z$ such that $[\alpha/\beta]=\gamma\infty$, and let $q_{[\alpha/\beta]}\coloneqq e^{\pi i\gamma^{-1}\tau/3n}$.  If we let $C$ be the leading coefficient of the Fourier expansion of $U(\tau)$ around $\alpha/\beta$, we find by Lemma \ref{matrixtransform}\ref{gleadingcoeff} and Lemma \ref{xplusone} that
\begin{align*}\log{\abs*{U(\tau)}}&=-\frac{\pi\Im\gamma^{-1}\tau}{3n}\ord_{[\alpha/\beta]\in X(6n)}U(\tau)+\log{\abs*{C}}+O(q_{[\alpha/\beta]})\\&=\begin{cases}\frac{2\pi}{3n}C(\alpha,\beta)\Im\gamma^{-1}\tau+O(q_{[\alpha/\beta]})&\quad\text{if }3\mid\beta\\-\frac{2\pi}{3n}C(\alpha,\beta)\Im\gamma^{-1}\tau+f(n)\log3+O(q_{[\alpha/\beta]})&\quad\text{if }3\mid\alpha\\\frac{1}{2}f(n)\log 3+O(q_{[\alpha/\beta]})&\quad\text{otherwise}\,,
\end{cases}
\end{align*}
because $f(n)$ is the number of cusps of $X(6n)$ mapping to any single cusp of $X(6)$.  By Proposition \ref{eisenstein}, $\lim_{s\to1}F(\tau,s)$ has the same singular points with the same residues as $\log{\abs*{U(\tau)}}$, hence $\log{\abs*{U(\tau)}}-\lim_{s\to1}F(\tau,s)$ is a bounded harmonic function on $\uhp^*$, which must be constant.

To determine this constant, we compute the second term in the Fourier expansion of $\lim_{s\to1}F(\tau,s)$ in Proposition \ref{eisenstein} around the cusp $\infty\in\uhp^*$.  We find that
\begin{align*}\lim_{s\to1}F(\tau,s)&=\frac{2\pi}{3n}C(1,0)\Im\tau-4\pi^2\lim_{s\to1}\frac{\zeta(2s-1)}{\zeta(2s)}\\&\times\raisebox{-0.7em}{$\left[\rule{0em}{2.5em}\right.$}\sum_{\substack{1\le a,b\le 3n\\3\mid a,b-1\\\gcd(a,b,n)=1}}\frac{C(a,b)}{(3n)^3}\prod_{p\mid3n}\big(1-p^{-2}\big)^{-1}\bigg[1+(1-s)\bigg(2\log\lambda_b+3\log3n\\[-2em]&\hspace{14.4em}+\sum_{p\mid3n}\frac{2}{p^2-1}\log p+\sum_{p\mid3n/\lambda_b}\frac{2}{p-1}\log p\bigg)\!\bigg]\\[0.5em]&\hspace{1.6111em}-\br \sum_{\substack{1\le a,b\le 3n\\3\mid a,b-1\\\gcd(a,b,n)=1}}\frac{C(b,-a)}{(3n)^3}\prod_{p\mid3n}\big(1-p^{-2}\big)^{-1}\bigg[1+(1-s)\bigg(2\log\lambda_a+3\log3n\\[-3em]&\hspace{14.4em}+\sum_{p\mid3n}\frac{2}{p^2-1}\log p+\sum_{p\mid3n/\lambda_a}\frac{2}{p-1}\log p\bigg)\!\bigg]\!\!\left.\rule{0em}{2.5em}\right]\\&+O(e^{-2\pi\Im\tau}),\end{align*}
where $\lambda_r\coloneqq\gcd(3n,r)$.  We can combine the two series in the following manner.  For any $a,b\in\Z/3n\Z$ such that $3\mid a, b-1$, let $a^*, b^*\in\Z/3n\Z$ such that $3\mid b^*, a^*+1$, $a^*\equiv a\!\pmod{n}$, and $b^*\equiv b\!\pmod{n}$; then $\lambda_{b^*}=3\lambda_b$ and $C(a^*,b^*)=C(a,b)$.  By summing together the $(a,b)$ term in the first sum and the $(b^*, -a^*)$ term in the second term for each $1\le a,b\le 3n$ such that $\gcd(a,b,n)=1$ and $3\mid a,b-1$, we deduce that
\[\lim_{s\to1}F(\tau,s)=\frac{2\pi}{3n}C(1,0)\Im\tau-\frac{12\log3}{(3n)^3}\prod_{p\mid 3n}\big(1-p^{-2}\big)^{-1}\br\sum_{\substack{1\le a,b\le 3n\\3\mid a,b-1\\\gcd(a,b,n)=1}}\br C(a,b)+O(e^{-2\pi\Im\tau}).\]
But if we consider $U$ as a function $X(6n)\to\mathbb{P}^1$, we see that
\[\sum_{\substack{1\le a,b\le 3n\\3\mid a,b-1\\\gcd(a,b,n)=1}}\br C(a,b)=\frac{1}{2}\deg U=|B_0|[\Gamma(6):\Gamma(6n)]=f(n)n^3\prod_{p\mid n}\big(1-p^{-2}\big),\]
hence \[\lim_{s\to1}F(\tau,s)=\frac{2\pi}{3n}C(1,0)\Im\tau-\frac{f(n)\log3}{2}+O(e^{-2\pi\Im\tau}),\]
as desired.\end{proof}
What now remains is to express the special value $\lim_{s\to1}F(\omega,s)$ in terms of the residue of a Dirichlet $L$-function for $k$ associated to a cubic character, which can be rewritten in terms of the Dedekind zeta function of $K$, to which we can apply the class number formula.  Indeed, we have the following theorem.
\begin{thm}\label{lfunction}Let $n$ be a positive integer relatively prime to $6$ such that $n\not\equiv\pm1\!\pmod{9}$ and no exponent in the prime factorization of $n$ is a multiple of $3$. 
 Let $n'$ be the largest squarefree divisor of $n$.  For any $s\in\C$ with $\Re s>1$,
\[F(\omega,s)=\frac{2\pi n'\sigma(n/n')}{3(1-3^{-2s})}\bigg(\!\frac{\sqrt{3}}{2}\bigg)^s\frac{L_k(s,\chi)}{\zeta(2s)}\prod_{p\mid n}\frac{1-p^{-2}}{1-p^{-2s}},\vspace{-0.6em}\]
where $L_k(s,\chi)$ is the Dirichlet L-function with character $\chi\colon(\ok/3n\ok)^\times\to\C^\times$ such that $\chi(z)=\prod_i\!\big(\!\frac{z}{\lambda_i}\!\big)_{\!3}^{\!c_i}$.
\end{thm}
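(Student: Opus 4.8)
The plan is to evaluate $F(\omega,s)$ directly from the group-sum form of its definition, rather than from the Fourier expansion used in Theorem \ref{harmonicfns}. Writing $\gamma$ with bottom row $(c,d)$, the first displayed formula expresses $F$ as a sum over $\gamma\in\langle T^3\rangle\backslash\Gamma(3)$ of $C(d,-c)(\Im\gamma\tau)^s-C(c,d)(\Im\gamma S\tau)^s$. Since $\Im\gamma\tau=\Im\tau/\abs*{c\tau+d}^2$ and $\Im\omega=\sqrt3/2$, and since $\abs*{c\omega+d}^2=c^2-cd+d^2=N_{k/\Q}(d+c\omega)$ is exactly the norm form of $\ok=\Z[\omega]$, evaluating at $\tau=\omega$ turns each Eisenstein term into $(\sqrt3/2)^s$ times $N_{k/\Q}(\nu)^{-s}$ for a lattice element $\nu\in\ok$. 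The condition $(c,d)\equiv(0,1)\!\pmod 3$ for $\gamma\in\Gamma(3)$ makes $\nu_1\coloneqq d+c\omega\equiv1\!\pmod3$ and $\nu_2\coloneqq -c+d\omega\equiv\omega\!\pmod3$, so that
\[F(\omega,s)=\frac{2\pi}{3n}\Big(\frac{\sqrt3}{2}\Big)^s\sum_{\substack{(c,d)\equiv(0,1)(3)\\\gcd(c,d)=1}}\Big[C(d,-c)\,N_{k/\Q}(\nu_1)^{-s}-C(c,d)\,N_{k/\Q}(\nu_2)^{-s}\Big],\]
a sum indexed by primitive elements of $\ok$ normalized to be primary modulo $3$.

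Next I would split $C(\alpha,\beta)=P(\delta)+\tfrac23\,Q\,\Re\prod_i\big(\tfrac{\alpha-\beta\omega}{\lambda_i}\big)_3^{c_i}$, where $P(\delta)$ is the first divisor sum (depending on $(\alpha,\beta)$ only through $\delta=\gcd(n,\alpha^2+\alpha\beta+\beta^2)=\gcd(n,N_{k/\Q}(\alpha-\beta\omega))$) and $Q=\sum_{n'\mid d\mid n}dn\prod_{p\mid d}(1-p^{-2})$ is independent of $(\alpha,\beta)$. Since every prime dividing such a $d$ divides $n'$, the factor $\prod_{p\mid d}(1-p^{-2})=\prod_{p\mid n}(1-p^{-2})$ comes out and $Q=n\,n'\sigma(n/n')\prod_{p\mid n}(1-p^{-2})$, which supplies the $n'\sigma(n/n')\prod_{p\mid n}(1-p^{-2})$ in the claimed formula. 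The principal part $P$ then cancels: the involution $(c,d)\mapsto(-c,d)$ preserves the index set and interchanges $N_{k/\Q}(\nu_1)$ with $N_{k/\Q}(\nu_2)$ (hence also the values of $\delta$), so the two $P$-weighted sums agree and their difference vanishes. Only the cubic-character part survives.

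Writing $\chi(z)=\prod_i\big(\tfrac{z}{\lambda_i}\big)_3^{c_i}$ and $\Re\chi=\tfrac12(\chi+\bar\chi)$, the surviving sum is a combination of $\sum\chi(\nu)\,N_{k/\Q}(\nu)^{-s}$ over primitive primary $\nu$. The normalization $\nu\equiv1\!\pmod3$ (with its associate $\nu_2\equiv\omega\!\pmod3$) pins down one generator per ideal, so $\chi$ is well defined on ideals coprime to $3n$: because $\chi(\omega)=\omega^{(n^2-1)/3}$ is a nontrivial cube root of unity for $n\not\equiv\pm1\!\pmod9$ by Lemma \ref{froboncbrt}, while $\chi$ is trivial on $(\Z/n\Z)^\times$, the $\chi$- and $\bar\chi$-families recombine into the single Hecke $L$-function $L_k(s,\chi)=\sum_{\mathfrak a}\chi(\mathfrak a)N\mathfrak a^{-s}$, which is real since complex conjugation permutes the ideals and carries $\chi$ to $\bar\chi$. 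Factoring the gcd condition out of the element sum, $\sum_{\text{primitive}}=\sum_{\text{all}}\big/\big(\zeta(2s)\prod_{p\mid3n}(1-p^{-2s})\big)$ using $\chi(g)=1$ for $\gcd(g,3n)=1$, produces the factors $\zeta(2s)^{-1}$, $(1-3^{-2s})^{-1}$, and $\prod_{p\mid n}(1-p^{-2s})^{-1}$; collecting these with $\tfrac{2\pi}{3n}(\sqrt3/2)^s\cdot\tfrac23 Q$ and the constant from the $\chi$/$\bar\chi$ recombination yields the stated prefactor.

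The main obstacle is the bookkeeping in this last recombination. Showing that the two families assemble into exactly $L_k(s,\chi)$ — with no spurious terms and with precisely the right rational constant and powers of $3$ — requires carefully tracking the six units through the primary normalization (via the $\chi(\omega)$ computation), the interplay of $\chi$ and $\bar\chi$ from $\Re\chi$, and the Euler factor at the ramified prime $3$, which is why $(1-3^{-2s})^{-1}$ appears isolated from the $\prod_{p\mid n}$. Verifying that the local factors at $3$ and at primes dividing $n$ to higher powers combine so that the element sum collapses to the ideal sum $L_k(s,\chi)$ is the delicate step; the remaining manipulations are routine divisor-sum and Euler-product identities.
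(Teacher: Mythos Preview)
Your proposal is correct and follows essentially the same route as the paper. The paper also evaluates the group-sum form of $F$ at $\tau=\omega$, splits $C(\alpha,\beta)$ into its ``principal'' divisor-sum part and the cubic-character part, shows the principal part cancels, and then identifies the surviving character sum with $L_k(s,\chi)$ after removing the coprimality condition via the Euler product. The only cosmetic difference is that the paper first reindexes the $\gamma S$-term by $(c,d)\mapsto(c,d+c)$ so that both terms share the denominator $\lvert c\omega+d\rvert^{2s}$, whereupon $C(d,-c)-C(c,d-c)$ has identical $\delta$'s and the principal parts visibly cancel, while the character parts combine as $\Re\big[(1-\chi(\omega))\chi(c\omega+d)\big]$; your involution $(c,d)\mapsto(-c,d)$ achieves the same cancellation, and your reindexing of the $\nu_2$-sum by multiplication by $\omega$ produces the same factor $1-\chi(\omega)$. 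From there both arguments use $\chi(\omega)+\overline{\chi(\omega)}=-1$ (since $n\not\equiv\pm1\pmod 9$) to turn $\Re[(1-\chi(\omega))\chi]$ into $\tfrac32\chi$, and $\chi(g)=1$ for $g\in\Z$ coprime to $n$ to pull out $\zeta(2s)^{-1}\prod_{p\mid 3n}(1-p^{-2s})^{-1}$, exactly as you outline.
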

\begin{proof}Since $\chi(\omega)\ne1$, we see that
\begin{align*}F(\omega,s)&=\frac{2\pi}{3n}\sum_{\substack{3\mid c,d-1\\\gcd(c,d)=1}}[C(d,-c)-C(c,d-c)]\frac{(\sqrt{3}/2)^s}{\abs{c\omega+d}^{2s}}
\\&=\frac{4\pi}{9n}\sum_{\substack{3\mid c,d-1\\\gcd(c,d)=1}}\Re\bigg[\!\prod_i\bigg(\!\frac{c\omega+d}{\lambda_i}\!\bigg)_{\!3}^{\!c_i}-\prod_i\bigg(\!\frac{(c-d)\omega+c}{\lambda_i}\!\bigg)_{\!3}^{\!c_i}\bigg]\sum_{n'\mid d\mid n}dn\prod_{p\mid d}\big(1-p^{-2}\big)\frac{(\sqrt{3}/2)^s}{\abs{c\omega+d}^{2s}}\\&=\frac{4\pi n'\sigma(n/n')}{9}\bigg(\!\frac{\sqrt3}{2}\bigg)^{\!s}\prod_{p\mid n}\big(1-p^{-2}\big)\!\!\sum_{\substack{3\mid c, d-1\\\gcd(c,d)=1}}\!\!\Re\bigg[\bigg(\!1-\prod_i\Big(\!\frac{\omega}{\lambda_i}\!\Big)_{\!3}^{\!c_i}\bigg)\prod_i\bigg(\!\frac{c\omega+d}{\lambda_i}\!\bigg)_{\!3}^{\!c_i}\bigg]\frac{1}{\abs{c\omega+d}^{2s}}\\&=\frac{2\pi n'\sigma(n/n')}{3}\bigg(\!\frac{\sqrt3}{2}\bigg)^{\!s}\prod_{p\mid n}\big(1-p^{-2}\big)\cdot\zeta(2s)^{-1}\prod_{p\mid 3n}\big(1-p^{-2s}\big)^{-1}\sum_{3\mid c,d-1}\frac{\chi(c\omega+d)}{\abs{c\omega+d}^{2s}}\\&=\frac{2\pi n'\sigma(n/n')}{3(1-3^{-2s})}\bigg(\!\frac{\sqrt3}{2}\bigg)^{\!s}\frac{L_k(s,\chi)}{\zeta(2s)}\prod_{p\mid n}\frac{1-p^{-2}}{1-p^{-2s}},\end{align*}
as desired.
\end{proof}
\begin{proof}[Proof of Theorem \ref{fundunitcube}]Since $\zeta_K(s)=\zeta(s)L_k(s,\chi)$, we find that by Theorems \ref{harmonicfns} and \ref{lfunction},
\begin{align*}\log{N_{R_{6n}/K}(X(n\omega)+1)}&=2\log{\abs{N_{R_{6n}/kK}(X(n\omega)+1)}}\\&=2\log{\abs{U(\omega)}}\\&=2\lim_{s\to1}F(\omega,s)+f(n)\log3\\&=\frac{9\sqrt{3}n'\sigma(n/n')}{2\pi}\Res_{s=1}\zeta_K(s)+f(n)\log3.\end{align*}
By the class number formula,
\[\Res_{s=1}\zeta_K(s)=\frac{2\pi h_K\log u}{3n'\sqrt{3}},\]
and the theorem follows.\end{proof}\vspace{0.3em}
\section{\label{heegner}Heegner Points on $y^2=x^3+D$.}
We now turn to using Heegner points to construct rational points on the curve $E_D\colon y^2=x^3+D$ and proving, under appropriate conditions on $D$, that they are nontrivial, by applying Theorem \ref{fundunitcube}.  Recall from Theorem \ref{themodularparam} that we constructed modular functions $X$ and $Y$ of weight $0$ for $\Gamma(6)$ such that $Y^2=X^3+1$; these functions form a modular parametrization $\phi\coloneqq(X,Y)\colon X(6)\isom E_1$.

Suppose $a$ and $b$ are squarefree integers such that $6$, $a$, and $b$ are pairwise relatively prime and $\abs{a}\abs{b}^{-1}\equiv5$ or $7\!\pmod{9}$, and let $D\coloneqq a/b$.  Let us define $n\coloneqq\abs{ab^5}$, $\rho\coloneqq(-1)^{(n-1)/2}$, and $K\coloneqq\Q(\!\sqrt[3]{n})$; then $E_{\epsilon D}\cong E_{\rho n}$ as elliptic curves over $\Q$ (where $\epsilon\coloneqq(-1)^{(D-1)/2}$ is defined in Theorem \ref{mainresult}).  For any integers $A$, $B$, and $N$, consider the elliptic curves $C_{A,B}$ with (affine) equation $AX^3+BY^3=1$ and identity element $[1/\sqrt[3]{A}:-1/\sqrt[3]{B}:0]$ and $\widetilde{E}_N$ with equation $X^3+Y^3=N$ and identity element $[1:-1:0]$.  We can then construct the following isogenies, the second of which is described by Selmer \cite{Sel}, who attributes a version of it to Euler.\newpage

\begin{prop}\begin{enumerate}[label=\textup{(\alph*)}, leftmargin=*]\item The map $\varphi_N\colon E_{-27N^2}\to\widetilde{E}_{2N}$ defined by
\[\varphi_N(x,y)\coloneqq\Big(\frac{9N+y}{3x}, \frac{9N-y}{3x}\Big)\]
is an isomorphism of elliptic curves with inverse
\[\varphi_N^{-1}=\Big(\frac{6N}{X+Y}, \frac{9N(X-Y)}{X+Y}\Big).\]
\item\textup{\cite{Sel}} The morphism $\lambda_{A,B}\colon C_{A,B}\to E_{AB}$ defined by $\lambda_{A,B}(x,y)\coloneqq\big(\frac{U+V}{2}, \frac{U-V}{2}\big)$ where
\[U\coloneqq\frac{3ABx^2y^2}{1-ABx^3y^3}\]and\[V\coloneqq\frac{(Ax^3-By^3)(2+ABx^3y^3)}{3xy(1-ABx^3y^3)}\]is an isogeny, and has the property that no point in $\ker\lambda_{A,B}$ is defined over $\Q$.\end{enumerate}
\end{prop}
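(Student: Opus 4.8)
For part (a), the plan is to exhibit the two displayed formulas as mutually inverse maps of curves and then invoke the standard fact that a morphism of elliptic curves carrying one origin to the other is automatically a group isomorphism (see \cite{Silv1}). First I would substitute $X=(9N+y)/(3x)$ and $Y=(9N-y)/(3x)$ and record $X+Y=6N/x$, $X-Y=2y/(3x)$, and $XY=(81N^2-y^2)/(9x^2)$; feeding the relation $y^2=x^3-27N^2$ into the last of these gives $XY=(108N^2-x^3)/(9x^2)$, whence
\[X^3+Y^3=(X+Y)\big((X+Y)^2-3XY\big)=\frac{6N}{x}\cdot\frac{x}{3}=2N,\]
so $\varphi_N$ maps $E_{-27N^2}$ into $\widetilde{E}_{2N}$. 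A direct substitution then confirms that the displayed $\varphi_N^{-1}$ recovers $x=6N/(X+Y)$ and $y=9N(X-Y)/(X+Y)$ and that both compositions are the identity where defined. Since a rational map out of a smooth projective curve extends to a morphism, $\varphi_N$ and its inverse are morphisms; checking that the Weierstrass point at infinity of $E_{-27N^2}$ corresponds to $[1:-1:0]$ on $\widetilde{E}_{2N}$ (as $X+Y\to0$ forces $x\to\infty$ in the inverse) shows origin maps to origin, so $\varphi_N$ is an isomorphism of elliptic curves.

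For part (b) the same two-step philosophy applies, but the bookkeeping is heavier. I would first show $\lambda_{A,B}$ lands on $E_{AB}$ by writing $\xi=(U+V)/2$, $\eta=(U-V)/2$ and verifying $\eta^2=\xi^3+AB$ as an identity on $C_{A,B}$. The clean way to organize the computation is to set $s\coloneqq Ax^3$ and $t\coloneqq By^3$, so that the defining relation becomes $s+t=1$ and $ABx^3y^3=st$; after clearing the common denominator $3xy(1-st)$, the desired equality reduces to a single-variable polynomial identity once $t$ is eliminated via $t=1-s$. Because $\lambda_{A,B}$ is then a nonconstant rational map between smooth projective curves it extends to a morphism, and verifying that it sends the chosen origin $[1/\sqrt[3]{A}:-1/\sqrt[3]{B}:0]$ of $C_{A,B}$ to the point at infinity of $E_{AB}$ upgrades it to an isogeny by the same homomorphism principle used in part (a).

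It then remains to analyze $\ker\lambda_{A,B}$. The preimage of the origin of $E_{AB}$ is exactly the set of points of $C_{A,B}$ at which the $\xi$-coordinate becomes infinite, namely the three points at infinity $[1/\sqrt[3]{A}:-\zeta/\sqrt[3]{B}:0]$ with $\zeta^3=1$, together with the affine points where the denominators $xy$ and $1-ABx^3y^3$ vanish. Imposing $s+t=1$ shows the latter force either a vanishing coordinate, giving $y=\zeta/\sqrt[3]{B}$ or $x=\zeta/\sqrt[3]{A}$, or else $st=1$, i.e.\ $Ax^3\in\{-\omega,-\omega^2\}$. In every case the coordinates lie in $\Q(\omega,\sqrt[3]{A},\sqrt[3]{B})$ with at least one of $\omega$, $\sqrt[3]{A}$, $\sqrt[3]{B}$ genuinely appearing, so under the standing hypotheses (with $A$, $B$ coprime, squarefree, and prime to $6$, so that none of $A$, $B$, $A/B$ is a perfect cube) none of these points is defined over $\Q$.

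I expect the main obstacle to be the verification in part (b) that the image satisfies $\eta^2=\xi^3+AB$: this is the one genuinely lengthy computation, and the substitution $s=Ax^3$, $t=By^3$ is what keeps it tractable, converting an identity among rational functions in $x,y$ into a polynomial identity in a single variable. A secondary subtlety is making sure the enumeration of $\ker\lambda_{A,B}$ is complete, i.e.\ that the base points of the affine formulas account for the whole kernel and that the extension of $\lambda_{A,B}$ to the projective models carries each of them to the origin; this is where a degree count for $\lambda_{A,B}$ and the smoothness of the two plane models enter.
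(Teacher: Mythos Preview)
The paper does not actually prove this proposition: part (a) is left as a direct verification and part (b) is attributed to Selmer \cite{Sel}. Your treatment of part (a) is correct and is exactly the intended check.

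For part (b) there is a genuine gap. You set out to verify the Weierstrass relation $\eta^2=\xi^3+AB$, but that identity is false for the given $U,V$; what the formulas actually satisfy is $\xi^3+\eta^3=AB$, so the target is $\widetilde{E}_{AB}$, not $E_{AB}$. (This is forced by the isogeny diagram a few lines below, where $\lambda_{n^2,2}$ and $\lambda_{1,2n^2}$ land in $\widetilde{E}_{2n^2}$; the ``$E_{AB}$'' in the statement is a misprint.) With your substitution $s=Ax^3$, $t=By^3$, $p=st$, the computation that succeeds is $\xi^3+\eta^3=\tfrac14U(U^2+3V^2)$ together with $27p^2+(1-4p)(2+p)^2=-4(p-1)^3$, giving $U^2+3V^2=4(1-p)/(3x^2y^2)$ and hence $\xi^3+\eta^3=AB$.

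Your kernel enumeration also over-counts. The points with $st=1$ are \emph{not} in $\ker\lambda_{A,B}$: there $U/V\to\mp\sqrt{-3}$, so $\xi/\eta\to(1+\sqrt{-3})/2=-\omega^2$ (or $-\omega$), and the image is one of the two non-identity points at infinity $[1:-\omega^{\pm1}:0]$ of $\widetilde{E}_{AB}$, not the origin $[1:-1:0]$. The genuine kernel consists of the three points at infinity of $C_{A,B}$ together with the six affine points with $xy=0$, nine points in all, so $\lambda_{A,B}$ has degree $9$. Finally, the ``standing hypotheses'' you impose ($A,B$ squarefree and cube-free) are neither in the statement nor met in the paper's application, where $(A,B)\in\{(n^2,2),(1,2n^2)\}$; indeed for $A=1$ the rational point $(1,0)\in C_{1,B}(\Q)$ lies in the kernel, so the rationality clause already requires extra care beyond what you have sketched.
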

\noindent Our strategy to construct points on $E_{\rho n}$ (and consequently on $E_{\epsilon D}$) is then to consider a point on $E_1$ arising as the image of a Heegner point on $X(6)$, and find its image under the following chain of isogenies and isomorphisms. The maps in the diagram below marked with an `$\cong$' scale $x$ and $y$ by a constant factor.
\[\begin{tikzpicture}
    \draw [->] (-0.95, 0) -- (0.4, 0);
    \draw [->] (1.35, 0) -- (2.525, 0);
    \draw [->] (3.05, 0.1) -- (4.525, 0.4);
    \draw [->] (3.075, -0.05) -- (4.465, -0.35);
    \draw [->] (5.5, 0.375) -- (6.7, 0.075);
    \draw [->] (5.5, -0.35) -- (6.7, -0.075);
    \draw [->] (7.7, 0) -- (8.6, 0);
    \draw [->] (9.875, 0) -- (11.25, 0);
    \draw [->] (12.5, 0) -- (13.475, 0);
    \node at (-1.225, 0.005) {$E_1$};
    \node at (0.875, -0.01) {$E_{-27}$};
    \node at (2.8, 0.055) {$\widetilde{E}_2$};
    \node at (5, 0.4) {$C_{n^2, 2}$};
    \node at (5, -0.45) {$C_{1, 2n^2}$};
    \node at (7.2, 0.06) {$\widetilde{E}_{2n^2}$};
    \node at (9.25, -0.015) {$E_{-27n^4}$};
    \node at (11.9, -0.02) {$E_{-27\rho n}$};
    \node at (13.825, -0.02) {$E_{\rho n}$};
    \node at (-0.275, 0.15) {\scriptsize$\cong\!/k$};
    \node at (1.9375, 0.15) {\scriptsize$\varphi_1$};
    \node at (3.775, 0.515) {\scriptsize$\cong\!\!/\!K\!\big(\!\raisebox{-0.1em}{$\sqrt[3]{2}$}\big)$};
    \node at (3.775, -0.49) {\scriptsize$\cong\!\!/\!K\!\big(\!\raisebox{-0.1em}{$\sqrt[3]{2}$}\big)$};
    \node at (6.05, 0.425) {\scriptsize$\lambda_{n^2,2}$};
    \node at (6.125, -0.475) {\scriptsize$\lambda_{1,2n^2}$};
    \node at (8.15, 0.25) {\scriptsize$\varphi_{n^2}^{-1}$};
    \node at (10.5625, 0.175) {\scriptsize$\cong\!\!/\Q(\!\sqrt{\vphantom{b}\!\rho n})$};
    \node at (13, 0.15) {\scriptsize$\cong\!/k$};
\end{tikzpicture}\]
The first step is to show that we can use our point $\phi(n\omega)\in E_1(R_{6n})$ to obtain a point in either $C_{n^2,2}(R_n)$ or $C_{1,2n^2}(R_n)$, depending on whether $n\equiv 5$ or $7\!\pmod{9}$.  The image of $\phi(n\omega)$ in $\smash{\widetilde{E}_2(R_{6n})}$ is $(f_-(n\omega), -f_+(n\omega))$, where
\[f_\pm(\tau)\coloneqq\frac{Y(\tau)\pm\sqrt{-3}}{X(\tau)\sqrt{-3}}.\]
Let us now determine the action of $\Gal(R_{6n}/R_n)$ on $(f_-(n\omega), -f_+(n\omega))$.  The following lemma is a generalization of Proposition 2.16 in \cite{Sa}.
\begin{lemma}\label{randj} Let $x\in\Z[n\omega]$, and let $r,j\in\Z$ such that $x\equiv\omega^r\!\pmod{2}$ and $x\equiv\omega^j\!\pmod{3}$.  Then $S(x)\equiv(\gamma')^r\gamma_{-\hspace{-0.11em}\left(\hspace{-0.22em}\frac{n}{3}\hspace{-0.22em}\right)}^j\!\pmod{6}$, where $\gamma_\pm$ and $\gamma'$ are matrices satisfying the conditions in Proposition \ref{modparamidentities}.
\end{lemma}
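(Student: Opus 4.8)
The plan is to reduce the stated congruence to a computation with the multiplication matrix $M(x)$ and then verify it separately modulo $2$ and modulo $3$. The first step is the identity
\[S(x)\equiv M(x)\pmod 6.\]
To see this, note that for the hypotheses to make sense $x$ must be prime to $6$, and in the application it is prime to $6n$; writing $x=\alpha+\beta n\omega$ this forces $\gcd(\alpha,\beta)$ to be prime to $6$, so the sign in $q=\pm\gcd(\alpha,\beta)$ can indeed be chosen with $q\equiv1\pmod6$ and every entry of $S(x)$ has denominator prime to $6$. Reducing the explicit formula for $S(x)$ modulo $6$ then annihilates the correction terms $6r$ and $6s$, and replaces each remaining entry $m_{ij}/q$ by $m_{ij}\pmod6$ because $q\equiv1$; the resulting matrix is exactly $M(x)\bmod6$. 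Hence it suffices to prove $M(x)\equiv(\gamma')^r\gamma_{-\left(\frac n3\right)}^{\,j}\pmod6$.

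The key structural point is that $M(x)=q_{n\omega}(x)$ is the matrix of multiplication by $x$ on $k$ in the ordered $\Q$-basis $(n\omega,1)$, so $x\mapsto M(x)$ is multiplicative. Since $\gcd(n,6)=1$, the order $\Z[n\omega]$ is maximal at $2$ and $3$, reduction gives $\Z[n\omega]/2\cong\F_4$ and $\Z[n\omega]/3\cong\Z[\omega]/3$, and $M(x)\bmod2$ and $M(x)\bmod3$ depend only on $x\bmod2$ and $x\bmod3$; thus they define group homomorphisms on the unit groups $(\Z[n\omega]/2)^\times$ and $(\Z[n\omega]/3)^\times$. As the data $x\equiv\omega^r\pmod2$ and $x\equiv\omega^j\pmod3$ express $x$ through the generator $\omega$ of the relevant cyclic subgroups, the whole statement is determined by the two values $M(\omega)\bmod2$ and $M(\omega)\bmod3$.

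To compute these I would use $\omega\equiv n^{-1}(n\omega)$ in $\Z[n\omega]/2$ and in $\Z[n\omega]/3$, legitimate since $n$ is invertible there, and read off $M(\omega)$ from the explicit entries of $M$. Modulo $2$ one finds $M(\omega)\equiv\begin{bsmallmatrix}1&1\\1&0\end{bsmallmatrix}=TS\equiv\gamma'\pmod2$, and since $\gamma_\pm\equiv\id\pmod2$ multiplicativity gives $M(x)\equiv(\gamma')^r\pmod2$. Modulo $3$ the residue of $n$ enters: a direct evaluation gives $M(\omega)\equiv T^{-1}S=\gamma_-$ when $n\equiv1\pmod3$ and $M(\omega)\equiv-TS=-\gamma_+$ when $n\equiv2\pmod3$, which in both cases is $\gamma_{-\left(\frac n3\right)}$, and since $\gamma'\equiv\id\pmod3$ this yields $M(x)\equiv\gamma_{-\left(\frac n3\right)}^{\,j}\pmod3$. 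Combining the two congruences by the Chinese Remainder Theorem produces the claimed identity modulo $6$.

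The one genuine subtlety, and the step I expect to demand the most care, is the sign appearing modulo $3$ when $n\equiv2\pmod3$: there $M(\omega)=-\gamma_+$ rather than $\gamma_+$ (indeed $\gamma_+=TS$ has order $6$ in $\SL_2(\Z/3)$ whereas $M(\omega)$ has order $3$, so equality on the nose is impossible, and correspondingly $\gamma_{+}^{\,j}$ is only well defined for $j\bmod3$ once one passes to $\mathrm{PSL}_2$). This is harmless because the congruence is used only through the transformation law of the weight-$0$ map $\phi$ in Proposition \ref{modparamidentities}, for which $\gamma$ and $-\gamma$ act identically; accordingly the identity is to be read in $\mathrm{PSL}_2(\Z/6)$, equivalently as an equality of the induced automorphisms of $E_1$, and with that reading the argument above is complete.
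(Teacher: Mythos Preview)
The paper states Lemma~\ref{randj} without proof (merely noting it generalises Satg\'e's Proposition~2.16), so there is no argument in the paper to compare against. Your approach---reducing $S(x)$ to $M(x)$ modulo~$6$ via $q\equiv1\pmod6$, then exploiting the multiplicativity of the regular representation $x\mapsto M(x)$ and checking the generator $\omega$ separately at $2$ and at $3$---is correct and is the natural way to verify the claim.

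Your observation about the sign modulo $3$ when $n\equiv2\pmod3$ is both correct and worth recording: in that case one computes $M(\omega)\equiv-\gamma_+\pmod3$, so for $j$ odd the stated congruence holds only up to $\pm I$ in $\SL_2(\Z/6\Z)$. Indeed $j$ is determined only modulo $3$ (since $\omega^3=1$ in $\Z[\omega]/3$) whereas $\gamma_+\equiv TS$ has $(TS)^3=-I$, so the symbol $\gamma_+^{\,j}$ is itself well-defined only in $\mathrm{PSL}_2(\Z/3\Z)$. As you say, this is harmless for the intended application, because the weight-$0$ map $\phi=(X,Y)$ and the identities of Proposition~\ref{modparamidentities} are insensitive to the sign of the acting matrix; the Proposition immediately following Lemma~\ref{randj} uses the lemma only through $\phi(S(x)\,\cdot\,)$. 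You have in effect detected a small imprecision in the paper's formulation and correctly explained why it does not affect anything downstream.
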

\begin{prop} Let $x\in\Z[n\omega]$, and let $r$ and $j$ be as in Lemma \ref{randj}.  Then
\[(x,\,R_{6n}/k)(f_-(n\omega), -f_+(n\omega))=\big(\omega^{r-\left(\hspace{-0.22em}\frac{n}{3}\hspace{-0.22em}\right)j}f_-(n\omega), -\omega^{-r-\left(\hspace{-0.22em}\frac{n}{3}\hspace{-0.22em}\right)j}f_+(n\omega)\big)+Q_j,\]
where in projective coordinates,
\[Q_j\coloneqq\begin{cases}[1:-1:0]&\quad j\equiv0\br\pmod{3}\\ [\omega^{-\left(\hspace{-0.22em}\frac{n}{3}\hspace{-0.22em}\right)}:\omega^{-\left(\hspace{-0.22em}\frac{n}{3}\hspace{-0.22em}\right)}:1]&\quad j\equiv1\br\pmod{3}\\ [1:1:1]&\quad j\equiv2\br\pmod{3}\,.\end{cases}\]\end{prop}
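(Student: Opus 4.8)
The plan is to combine the Shimura Reciprocity theorem with the transformation identities of Proposition \ref{modparamidentities}, and then transport the resulting formula on $E_1$ across the $k$-rational isomorphism to $\widetilde{E}_2$. First I would observe that both $X$ and $Y$ are ratios of $k$-linear combinations of the division values $e^{(6)}_{\alpha,\beta}$, hence, after clearing the common factor $g_2g_3/\Delta$, ratios of $k$-linear combinations of the Fricke functions $f^{(6)}_{\alpha,\beta}$; since $\sqrt{-3}\in k$, the same is true of $f_-$ and $f_+$. Because every element of $\Gal(R_{6n}/k)$ fixes $k$, Proposition \ref{MandS} applies coordinatewise and gives $(x,R_{6n}/k)\phi(n\omega)=\phi(S(x)\,n\omega)$; applying the $k$-isomorphism $\psi\colon E_1\to\widetilde{E}_2$ (which sends $\phi(n\omega)$ to $(f_-(n\omega),-f_+(n\omega))$ and commutes with the Galois action) reduces the claim to the computation of $\psi(\phi(S(x)\,n\omega))$ on $\widetilde{E}_2$.

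Next, since $X$ and $Y$ are modular for $\Gamma(6)$, the value $\phi(S(x)\,n\omega)$ depends only on $S(x)$ modulo $6$, so Lemma \ref{randj} lets me replace $S(x)$ by $(\gamma')^r\gamma_{-\left(\frac{n}{3}\right)}^{\,j}$. I would then feed this into Proposition \ref{modparamidentities}: each factor $\gamma_{-\left(\frac{n}{3}\right)}$ contributes the endomorphism $[\omega^{\left(\frac{n}{3}\right)}]$ together with a translation by a nonzero $2$-torsion point $(-\omega^{\left(\frac{n}{3}\right)},0)$, and iterating $j$ times (collapsing the accumulated $2$-torsion by $1+\omega+\omega^2=0$) yields $[\omega^{\left(\frac{n}{3}\right)j}]\phi(n\omega)+P^{(j)}$ with $P^{(j)}$ an explicit $2$-torsion point depending on $j\bmod 3$; each factor $\gamma'$ then adds the $3$-torsion point $(0,1)$. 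Thus on $E_1$ one has $\phi(S(x)\,n\omega)=[\omega^{\left(\frac{n}{3}\right)j}]\phi(n\omega)+P^{(j)}+r\,(0,1)$.

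The final and most delicate step is to push this expression through the group homomorphism $\psi$ and read off coordinates. The CM endomorphism $[\omega^{\left(\frac{n}{3}\right)j}]$ transports to the diagonal scaling $(X,Y)\mapsto(\omega^{-\left(\frac{n}{3}\right)j}X,\omega^{-\left(\frac{n}{3}\right)j}Y)$ of $\widetilde{E}_2$, producing the factor $\omega^{-\left(\frac{n}{3}\right)j}$ common to both coordinates. The $2$-torsion point $P^{(j)}$ maps to an affine $2$-torsion point of $\widetilde{E}_2$, which I would identify with $Q_j$ by evaluating $f_\pm$ at the relevant cusps among $1/2,-1/2,-3/2$ listed in Proposition \ref{modparamidentities} (for $j\equiv 0$ one gets $P^{(0)}=O$ and $Q_0=[1:-1:0]$). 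The crux is the $3$-torsion point $(0,1)=\phi(1/3)$: since $X(1/3)=0$, its image $\psi(0,1)$ is a point at infinity of $\widetilde{E}_2$, namely $[-\omega^2:1:0]$ as one reads off from $f_-(1/3)/f_+(1/3)=\omega^2$, and I would show that translation by it is realized in affine coordinates by the monomial map $(X,Y)\mapsto(\omega X,\omega^{-1}Y)$. Applying the $r$-th power of this map to the already-scaled point introduces exactly the asymmetric factors $\omega^{r}$ on $f_-$ and $\omega^{-r}$ on $f_+$; adding $Q_j$ last gives the stated formula.

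The main obstacle I anticipate is verifying that translation by the point at infinity $\psi(0,1)$ is precisely the monomial map $(X,Y)\mapsto(\omega X,\omega^{-1}Y)$, with no extra root-of-unity twist, and pinning down all the signs and exponents (the sign on $f_+$, the direction of $[\omega]$ under $\psi$, and the precise powers $\pm\left(\frac{n}{3}\right)$). As in the proof of Proposition \ref{modparamidentities} and in \cite{Sa}, I would settle these by a direct special-value computation: checking the candidate monomial map against the group law on $X^3+Y^3=2$ at a single explicit point to confirm it equals translation by $[-\omega^2:1:0]$ without a twist, and confirming the scaling convention for $[\omega]$ on $\widetilde{E}_2$ likewise by evaluating $f_\pm$ at a cusp.
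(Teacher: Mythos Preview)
Your proposal is correct and follows essentially the same route as the paper. The paper's proof is simply a terser version of yours: it invokes Proposition \ref{modparamidentities} to write down directly the two transformation laws
\[
\big(f_-(\gamma_{-\left(\frac{n}{3}\right)}^{\,j}\tau),\,-f_+(\gamma_{-\left(\frac{n}{3}\right)}^{\,j}\tau)\big)=\big(\omega^{-\left(\frac{n}{3}\right)j}f_-(\tau),\,-\omega^{-\left(\frac{n}{3}\right)j}f_+(\tau)\big)+Q_j
\]
and
\[
\big(f_-(\gamma'\tau),\,-f_+(\gamma'\tau)\big)=\big(f_-(\tau),\,-f_+(\tau)\big)+[\omega:-\omega^{-1}:0]=\big(\omega f_-(\tau),\,-\omega^{-1}f_+(\tau)\big),
\]
and then combines them with Lemma \ref{randj}. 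This is exactly your plan, except that the paper works on $\widetilde{E}_2$ from the start rather than computing on $E_1$ and then pushing through $\psi$. In particular, the ``main obstacle'' you anticipate---that translation by $\psi(0,1)=[\omega:-\omega^{-1}:0]$ is the monomial map $(X,Y)\mapsto(\omega X,\omega^{-1}Y)$---is precisely the content of the second displayed equation above, and the paper verifies it in the same way you propose, by a special-value check.
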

\begin{proof} By Proposition \ref{modparamidentities},
\[\big(f_-(\gamma_{-\hspace{-0.11em}\left(\hspace{-0.22em}\frac{n}{3}\hspace{-0.22em}\right)}^j\tau), -f_+(\gamma_{-\hspace{-0.11em}\left(\hspace{-0.22em}\frac{n}{3}\hspace{-0.22em}\right)}^j\tau)\!\big)=\big(\omega^{-j\left(\hspace{-0.22em}\frac{n}{3}\hspace{-0.22em}\right)}f_-(\tau), -\omega^{-j\left(\hspace{-0.22em}\frac{n}{3}\hspace{-0.22em}\right)}f_+(\tau)\!\big)+Q_j\]
and
\[(f_-(\gamma'\tau), -f_+(\gamma'\tau)\!)=(f_-(\tau), -f_+(\tau)\!)+[\omega:-\omega^{-1}:0]=(\omega f_-(\tau), -\omega^{-1}f_+(\tau)\!).\]
Combining these two results with Lemma \ref{randj} proves the proposition.\end{proof}
\begin{thm}\begin{enumerate}[label=\textup{(\alph*)}, leftmargin=*]\item\label{5mod9} Let $g\colon\widetilde{E}_2\xrightarrow{\raisebox{-0.2em}{\scriptsize$\;\cong\;$}}C_{n^2,2}$ be the map with $g(x,y)=(x/\sqrt[3]{2n^2}, y/\sqrt[3]{4})$.  If $n\equiv5\!\pmod{9}$,
\[g((f_-(n\omega), -f_+(n\omega)\!)+(\omega^{-1},\omega^{-1}))\in C_{n^2, 2}(R_n).\]
\item\label{7mod9} Let $h\colon\widetilde{E}_2\xrightarrow{\raisebox{-0.2em}{\scriptsize$\;\cong\;$}}C_{1,2n^2}$ be the map with $h(x,y)=(x/\sqrt[3]{2}, y/\sqrt[3]{4n^2})$.  If $n\equiv7\!\pmod{9}$,
\[h((f_-(n\omega), -f_+(n\omega)\!)+(\omega,\omega))\in C_{1, 2n^2}(R_n).\]
\end{enumerate}
\end{thm}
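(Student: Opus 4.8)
The plan is to prove both parts by Galois descent: the point in question manifestly lies in $C_{n^2,2}(R_{6n})$ (resp.\ $C_{1,2n^2}(R_{6n})$), so it suffices to show that it is fixed by $\Gal(R_{6n}/R_n)$. The first task is to identify this group. Combining the class field theory isomorphism for $\Gal(R_{6n}/k)$ from Section~\ref{galconj} with the index formulas gives $[R_{6n}:R_n]=9$, and in fact $\Gal(R_{6n}/R_n)\cong(\Z/3)^2$, generated by $\sigma_1$, represented by some $x\in\Z[\omega]$ with $x\equiv\omega\pmod2$, $x\equiv1\pmod 3$, $x\equiv1\pmod n$ (so $r=1$, $j=0$ in the notation of Lemma~\ref{randj}), and $\sigma_2$, represented by $x\equiv1\pmod2$, $x\equiv\omega\pmod3$, $x\equiv1\pmod n$ (so $r=0$, $j=1$). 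It is important that the group is $(\Z/3)^2$ and not cyclic, since the hypothesis $n\equiv5$ or $7\pmod9$ enters only through the second generator $\sigma_2$.

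For each generator I would compute its action on the cube roots occurring in $g$ and on the Heegner point $R\coloneqq(f_-(n\omega),-f_+(n\omega))$. By Lemma~\ref{froboncbrt}\ref{froboncbrtn}, $\sigma_1$ sends $\sqrt[3]{2}\mapsto\omega\sqrt[3]{2}$ and fixes $\sqrt[3]{n}$, while $\sigma_2$ sends $\sqrt[3]{2}\mapsto\omega^{-1}\sqrt[3]{2}$ and $\sqrt[3]{n}\mapsto\omega^{-(n^2-1)/3}\sqrt[3]{n}$, the exponent being $\equiv1\pmod3$ when $n\equiv5\pmod9$ and $\equiv-1\pmod3$ when $n\equiv7\pmod9$. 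Substituting into $g(x,y)=(x/\sqrt[3]{2n^2},y/\sqrt[3]{4})$ yields $g^{\sigma_1}=g\circ\mu^{-1}$, where $\mu(x,y)=(\omega x,\omega^2 y)$, and (for $n\equiv5\pmod9$) $g^{\sigma_2}=g\circ[\omega^2]$, where $[\zeta]$ denotes the automorphism $(x,y)\mapsto(\zeta x,\zeta y)$ of $\widetilde{E}_2$. Meanwhile the Proposition immediately preceding the theorem gives $\sigma_1 R=\mu(R)$ and $\sigma_2 R=[\omega]R+(\omega,\omega)$, where the translation is exactly the point $Q_1$ for $\left(\frac{n}{3}\right)=-1$.

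Since $g$ is an isomorphism, fixedness of $g(P)$ under $\sigma_i$, where $P\coloneqq R+(\omega^{-1},\omega^{-1})$, is equivalent to $\mu^{-1}(\sigma_1 P)=P$ and $[\omega^2](\sigma_2 P)=P$. The first identity is automatic once one observes that $\mu(x,y)=(\omega x,\omega^2 y)$ is \emph{not} the complex-multiplication map but rather translation by the three-torsion point $[1:-\omega:0]$ (it has order $3$ yet moves $O$, and a short group-law check on $X^3+Y^3=2$ confirms $\mu=\tau_{[1:-\omega:0]}$); a translation commutes with adding $(\omega^{-1},\omega^{-1})$, so $\mu^{-1}(\mu(R)+(\omega^{-1},\omega^{-1}))=R+(\omega^{-1},\omega^{-1})=P$. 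For the second, $[\omega^2]$ is a genuine group homomorphism, so
\[{[\omega^2]}(\sigma_2 P)={[\omega^2]}\big([\omega]R+(\omega,\omega)+(\omega^{-1},\omega^{-1})\big)=R+(1,1)+(\omega,\omega),\]
and it remains to check $(1,1)+(\omega,\omega)=(\omega^{-1},\omega^{-1})$. This is precisely the statement that the three nonzero $2$-torsion points $(1,1)$, $(\omega,\omega)$, $(\omega^2,\omega^2)$ of $\widetilde{E}_2$ (all lying on the line $Y=X$) sum to $O$; hence $(1,1)+(\omega,\omega)=-(\omega^2,\omega^2)=(\omega^{-1},\omega^{-1})$. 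This collinearity, together with the value $\left(\frac{n}{3}\right)=-1$, is exactly what forces the choice of translation $(\omega^{-1},\omega^{-1})$: it is calibrated so that the $\sigma_2$-twist of $g$ is undone.

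Part~\ref{7mod9} is entirely parallel, with $\left(\frac{n}{3}\right)=1$: now $\sigma_2$ sends $\sqrt[3]{n}\mapsto\omega^{-1}\sqrt[3]{n}$, so that $h^{\sigma_2}=h\circ[\omega]$ and $\sigma_2 R=[\omega^2]R+(\omega^{-1},\omega^{-1})$, and the descent reduces to the mirror identity $(1,1)+(\omega^{-1},\omega^{-1})=(\omega,\omega)$, again an instance of the same $2$-torsion collinearity. The main obstacle is bookkeeping rather than any single hard step: correctly determining $\Gal(R_{6n}/R_n)\cong(\Z/3)^2$, and then tracking how each generator simultaneously twists the rescaling isomorphism and the Heegner point, so as to recognize that one induced map is a translation (through which the added point passes unchanged) while the other is a complex-multiplication automorphism (whose twist is absorbed by the $2$-torsion collinearity).
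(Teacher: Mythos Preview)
Your proof is correct and follows essentially the same Galois-descent approach as the paper: both compute the $\Gal(R_{6n}/R_n)$-action on the Heegner point (via the preceding Proposition) and on the twist isomorphism $g$, then reduce invariance to the $2$-torsion identity $(1,1)+(\omega,\omega)+(\omega^2,\omega^2)=O$ on $\widetilde{E}_2$. The only organizational difference is that you check the two generators $\sigma_1,\sigma_2$ separately and recognize $(x,y)\mapsto(\omega x,\omega^2 y)$ directly as translation by the $3$-torsion point $[1:-\omega:0]$, whereas the paper treats general $(r,j)$ at once via the decomposition $(\omega^a x,\omega^b y)=\psi^{-(a+b)}\big((x,y)+g[\omega^{-a+b}:\omega^{a-b}:0]\big)$ on $C_{n^2,2}$ and then verifies $A=\theta^{-j}(Q_j+A)$ by casework on $j$.
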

\begin{proof} We will prove part \ref{5mod9}, the proof of part \ref{7mod9} is similar.  We need to show that $g((f_-(n\omega), -f_+(n\omega)\!)+(\omega^2,\omega^2))$ is invariant under $(x,\,R_{6n}/R_n)$ for all $x\in\Z[n\omega]$.  Let $\psi\colon C_{n^2,2}\to C_{n^2,2}$ be the isogeny with $\psi(x,y)=(\omega x,\omega y)$.  For any integers $a$ and $b$, and any $(x,y)\in C_{n^2,2}(\overline{\Q})$,
\[(\omega^ax, \omega^by)=\psi^{-(a+b)}((x,y)+g[\omega^{-a+b}:\omega^{a-b}:0]).\]
Let $A\coloneqq(\omega^{-1}, \omega^{-1})\in\widetilde{E}_2(\overline{\Q})$.  If $\theta\colon\widetilde{E}_2\to\widetilde{E}_2$ is the isogeny with $\theta(x,y)=(\omega x, \omega y)$, 
\begin{align*} &\hspace{1.3333em}(x,\,R_{6n}/k)[g((f_-(n\omega), -f_+(n\omega))+A)]\\&=((x,\,R_{6n}/k)g)((\omega^{r+j}f_-(n\omega), -\omega^{-r+j}f_+(n\omega))+Q_j+A)\\&=(\psi^{-j}\circ g)((\omega^{r+j}f_-(n\omega), -f_+(n\omega))+[\omega^{-r}:\omega^r:0]+Q_j+A)\\&=g((f_-(n\omega), -f_+(n\omega))+(\psi^{-j}\circ g)(Q_j+A)\\&=g((f_-(n\omega), -f_+(n\omega))+\theta^{-j}(Q_j+A).
\end{align*}The theorem follows from the fact that $A=\theta^{-j}(Q_j+A)$, which can be shown by casework on $j\!\pmod{3}$.\end{proof}
\noindent Next, we can now follow through the rest of the diagram, and conclude that if $n\equiv5$ or $7\!\pmod{9}$,
\[(a_n(n\omega),b_n(n\omega))+\big({-}\omega^{-\left(\hspace{-0.22em}\frac{n}{3}\hspace{-0.22em}\right)}\rho\sqrt[3]{n},0\big)\in E_{\rho n}(R_n)\]
where
\[a_n(\tau)\coloneqq-\frac{\rho\sqrt[3]{n}}{3}\bigg(\frac{4+f_+(\tau)^3f_-(\tau)^3}{f_+(\tau)^2f_-(\tau)^2}\bigg)\]
and
\[b_n(\tau)\coloneqq\frac{\sqrt{\rho n}}{6\sqrt{-3}}\bigg(\frac{(f_+(\tau)^3+f_-(\tau)^3)(8-f_+(\tau)^3f_-(\tau)^3)}{f_+(\tau)^3f_-(\tau)^3}\bigg).\]
Then we may construct a point on $E_{\rho n}$ defined over $\Q$ as
\begin{equation}\label{thepointS}S^*\coloneqq\Tr_{R_n/\Q}\big(\!(a_n(n\omega),b_n(n\omega))+\big({-}\omega^{-\left(\hspace{-0.22em}\frac{n}{3}\hspace{-0.22em}\right)}\rho\sqrt[3]{n},0\big)\!\big)\in E_{\rho n}(\Q).\end{equation}
To prove that this point is nontrivial, we will use the following homomorphism, described for all elliptic curves $E_D$ by Cassels \cite{Cas} and in greater generality by Silverman \cite{Silv1}, to relate the nontriviality of $S^*$ to a certain element of $R_{6n}^\times$ not being a square.
\begin{lemma}\label{grouphom}\textup{\cite{Cas, Silv1}} For any number field $L\supseteq K$, there is a group homomorphism
\[r_L\colon E_{\rho n}(L)/[2]E_{\rho n}(L)\to L^\times/(L^\times)^2\]
such that $r_L(x,y)=x+\!\rho\sqrt[3]{n}$ for all $(x,y)\not\in E_{\rho n}(L)[2]$.  This group homomorphism has the special values $r_L(O)=1$ and $r_L({-}\rho\sqrt[3]{n}, 0)=3$.
\end{lemma}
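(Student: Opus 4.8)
The plan is to recognize $r_L$ as the descent-by-two-torsion homomorphism attached to the unique $L$-rational $2$-torsion point of $E_{\rho n}$, and then to compute the two distinguished values explicitly. Because $L\supseteq K=\Q(\!\sqrt[3]{n})$, the element $e\coloneqq-\rho\sqrt[3]{n}$ lies in $L$, and since $\rho=\pm1$ we have $e^3=-\rho^3 n=-\rho n$, so $e$ is a root of $f(x)\coloneqq x^3+\rho n$ and $T\coloneqq({-}\rho\sqrt[3]{n},0)$ is a point of $E_{\rho n}(L)[2]$. For $P=(x,y)\notin E_{\rho n}(L)[2]$ I would set $r_L(P)\coloneqq x-e=x+\rho\sqrt[3]{n}$; this lies in $L^\times$, since $x=e$ would force $y^2=f(e)=0$ and hence $P\in E_{\rho n}(L)[2]$. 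The existence and well-definedness of this construction modulo squares is the standard descent material in \cite{Cas} and \cite{Silv1}, which I would invoke; the work specific to our curve is the verification of the homomorphism property and the identification of $r_L(O)$ and $r_L(T)$.

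The homomorphism property rests on a single square identity. If $P_1,P_2,P_3\in E_{\rho n}(L)$ are distinct from $O$ and from $T$ with $P_1+P_2+P_3=O$, they are collinear, say on $y=\lambda x+\nu$ with $\lambda,\nu\in L$, and their $x$-coordinates $x_1,x_2,x_3$ are exactly the roots of
\[x^3+\rho n-(\lambda x+\nu)^2=(x-x_1)(x-x_2)(x-x_3).\]
Evaluating both sides at $x=e$ and using $e^3+\rho n=0$ gives $\prod_i(e-x_i)=-(\lambda e+\nu)^2$, whence
\[r_L(P_1)\,r_L(P_2)\,r_L(P_3)=\prod_i(x_i-e)=(\lambda e+\nu)^2\in(L^\times)^2.\]
Since $r_L(-P)=r_L(P)$ (as $-P$ has the same $x$-coordinate) and $P_3=-(P_1+P_2)$, this yields $r_L(P_1+P_2)=r_L(P_1)\,r_L(P_2)$ in $L^\times/(L^\times)^2$. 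Because the target $L^\times/(L^\times)^2$ is annihilated by $2$, any such homomorphism kills $[2]E_{\rho n}(L)$, so $r_L$ descends to the stated domain.

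It remains to treat the distinguished points. The value $r_L(O)=1$ is forced, since $r_L$ is a homomorphism and $1$ is the identity of $L^\times/(L^\times)^2$. For $T$, I would pick any $P_1,P_2\notin\{O,T\}$ with $P_1+P_2=T$; then $P_1,P_2,T$ are collinear on a line through $T$, and a direct Vieta computation on the cubic above (using $\nu=-\lambda e$ and $e^3=-\rho n$) gives $(x_1-e)(x_2-e)=3e^2$. Consistency of the homomorphism then forces $r_L(T)=3e^2=3\rho^2(\sqrt[3]{n})^2$; since $\rho^2=1$ and $\sqrt[3]{n}\in K\subseteq L$, the factor $(\sqrt[3]{n})^2$ is a square in $L^\times$, so $r_L({-}\rho\sqrt[3]{n},0)=3$ in $L^\times/(L^\times)^2$, as claimed. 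The one genuinely delicate point is that the naive formula $x-e$ degenerates exactly at $T$ (where it returns $0\notin L^\times$) and at tangent or vertical line configurations, where one must check, e.g., $r_L([2]P)=r_L(P)^2=1$; rather than chase these cases by hand, I expect the cleanest route is to take well-definedness from the cohomological descent map of the cited references and to use the explicit computations above only to pin down the values on $E_{\rho n}(L)\setminus E_{\rho n}(L)[2]$ and at $T$.
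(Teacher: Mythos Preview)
The paper gives no proof of this lemma; it is simply stated with citations to Cassels \cite{Cas} and Silverman \cite{Silv1} as a known result. Your proposal is a correct and self-contained elaboration of the standard $2$-descent argument those references contain: the identification of $e=-\rho\sqrt[3]{n}$ as an $L$-rational root of $x^3+\rho n$, the Vieta/collinearity computation showing $\prod_i(x_i-e)$ is a square, and the computation $(x_1-e)(x_2-e)=3e^2$ for a line through $T$ are all accurate. Your observation that $3e^2\equiv 3$ modulo squares because $e^2=(\sqrt[3]{n})^2$ with $\sqrt[3]{n}\in L$ is exactly what is needed. The degenerate-line cases you flag (tangent lines, vertical lines, and the value at $T$ itself) are indeed the only places requiring care, and deferring well-definedness to the cited references while using your explicit computations to pin down the formula and the special values is entirely in keeping with how the paper treats the lemma.
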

\noindent Under this homomorphism, we see that if $L=R_{6n}$,
\begin{align*}&\hspace{1.3333em}r_L\big(\!(a_n(n\omega),b_n(n\omega))+\big({-}\omega^{-\left(\hspace{-0.22em}\frac{n}{3}\hspace{-0.22em}\right)}\rho\sqrt[3]{n},0\big)\!\big)\\&=-\Big(\hspace{-0.0555em}\frac{n}{3}\hspace{-0.0555em}\Big)\frac{X(n\omega)+1}{\sqrt{-3}}\bigg(\frac{\omega^{-\left(\hspace{-0.22em}\frac{n}{3}\hspace{-0.22em}\right)}\!\sqrt[3]{n}\,(f_+(n\omega)f_-(n\omega)-2)(X(n\omega)-2)}{f_+(n\omega)f_-(n\omega)X(n\omega)\sqrt{-3}}\bigg)^{\!2}\\&\equiv-\Big(\hspace{-0.0555em}\frac{n}{3}\hspace{-0.0555em}\Big)\sqrt{-3}\,(X(n\omega)+1)\in L^\times/(L^\times)^2.\end{align*}
\begin{proof}[Proof of Theorem \ref{mainresult}] Now $E_{\pm D}(\Q)_{tors}$ is trivial, and by a result of Cassels \cite{Cas}, $\rk E_D(\Q)+\rk E_{-D}(\Q)\le 1$ under our congruence and class number conditions on $D$.  To prove that $\rk E_{\epsilon D}(\Q)=1$ and $\rk E_{-\epsilon D}(\Q)=0$, it suffices to show that $S^*\in E_{\rho n}(\Q)$ is nontrivial.  In order to accomplish this, we will use Lemma \ref{grouphom} when $L=R_{6n}$ and prove that $r_L(S^*)\in L^\times/(L^\times)^2$ is not a square.  Since $n\not\equiv\pm1\!\pmod{9}$, $L=R_n(\!\sqrt[3]{2}, \sqrt[3]{n\vphantom{2}})$, and
\begin{align*}r_L([3]S^*)&=N_{R_{3n}/K}\,r_L\big(\!(a_n(n\omega),b_n(n\omega))+\big({-}\omega^{-\left(\hspace{-0.22em}\frac{n}{3}\hspace{-0.22em}\right)}\rho\sqrt[3]{n},0\big)\!\big)^3\\&=N_{L/K}(X(n\omega)+1)=3^{f(n)}u^{3h_K\sigma(n/n')}\equiv u\in L^\times/(L^\times)^2,\end{align*}
so it remains for us to show that $u$ is not a square in $L^\times$.  But since $h_K$ is odd, $\sqrt{u}$ cannot possibly be contained in the Hilbert class field of $K$; the field extension $K(\sqrt{u})/K$ must be ramified.  Since $u$ is a unit, $\sqrt{u}$ could only possibly be ramified over $K$ at a place dividing $2$.  Then $\sqrt{u}\notin R_{3n}$, as $R_{3n}$ is necessarily unramified over $\Q$ at all places not dividing $3n$, and hence $\sqrt{u}\notin L$ because $[L:R_{3n}]=3$, as desired.
\end{proof}
\vspace{-0.24em}In fact, this proof gives us further insights.  Since $r_L(S^*)\in L^\times/(L^\times)^2$ is nontrivial, we find that $S^*$ must be an odd multiple of the generator of $E_{\rho n}(\Q)$, which proves Theorem \ref{rankgenerator}.
\section*{Acknowledgments}
I am very grateful to Prof.\ Barry Mazur for his invaluable advice and guidance to help me gain an understanding of and pursue research in elliptic curves.  I thank Prof.\ Noam D.\ Elkies for insights on Heegner points and for numerically verifying my main result for all positive integers $D<5000$.  I thank Prof.\ Wei Zhang for illuminating me on possible directions and current research in the subject.  I thank Prof.\ Joe Harris for his kind support and encouragement.  I also thank Prof.\ Alexander Betts, Dr.\ Simon Rubinstein-Salzedo, and Dr.\ Alex Cowan for helpful discussions.


\begin{thebibliography}{22}
    \bibitem{ABR} Axler, S., Bourdon, P., Ramey, W.: \textit{Harmonic Function Theory}, 2nd ed. Springer-Verlag GTM \textbf{137} (2001)
    \bibitem{Cas} Cassels, J. W. S.: \textit{The rational solutions of the diophantine equation }$Y^2 = X^3 - D$. Acta Mathematica \textbf{82} (1950), pp. 243--273
    \bibitem{Cox} Cox, D. A.: \textit{Primes of the form }$x^2 + ny^2$\textit{: Fermat, Class Field Theory, and Complex Multiplication}.  John Wiley \& Sons, Inc. (1989)
    \bibitem{DV} Dasgupta, S., Voight, J.: \textit{Heegner Points and Sylvester's conjecture}. Arithmetic geometry, Clay Math. Proc. \textbf{8}, American Mathematical Society (2009), pp. 91–102.
    \bibitem{DV2} Dasgupta, S., Voight, J.: \textit{Sylvester's problem and Mock Heegner Points}. Proc. Amer. Math. Soc. \textbf{146} No. 8 (2018), pp. 3257--3273
    \bibitem{D} Donaldson, S. K.: \textit{Riemann Surfaces}. Oxford University Press (2011)
    \bibitem{E} Elkies, N. D.: \textit{Heegner point computations}. In: L.M. Adleman, MD. Huang (eds) \textit{Algorithmic Number Theory}, ANTS 1994, Lect. Notes Comp. Sci. \textbf{877}, pp. 122--133
    \bibitem{I1} Iwaniec, H.: \textit{Topics in Classical Automorphic Forms}. Graduate Studies in Mathematics \textbf{17}, American Mathematical Society (1997)
    \bibitem{I2} Iwaniec, H.: \textit{Spectral Methods of Automorphic Forms}, 2nd ed. Graduate Studies in Mathematics \textbf{53}, American Mathematical Society (2002)
    \bibitem{L} Lang, S.: \textit{Elliptic Functions}, 2nd ed, Springer-Verlag GTM \textbf{112} (1987)
    \bibitem{LD} Lejeune Dirichlet, G.: \textit{Recherches sur diverses applications de l'Analyse infinit\'esimale \`a la Th\'eorie des Nombres.\ Seconde Partie}. J. Reine. Angew. Math. \textbf{21} (1840), pp. 134--155
    \bibitem{M1} Monsky, P.: \textit{Mock Heegner Points and the Congruent Number Problem}. Math. Z. \textbf{204} (1990), pp. 45--68
    \bibitem{M2} Monsky, P.: \textit{Three constructions of rational points on $Y^2 = X^3 + NX$}. Math. Z. \textbf{209} (1992), pp. 445--462
    \bibitem{Sa} Satg\'{e}, P.: \textit{Un analogue du calcul de Heegner}. Invent. math. \textbf{87} (1987), pp. 425--439
    \bibitem{Sel} Selmer, E. S.: \textit{The diophantine equation }$ax^3 + bx^3 + cx^3 = 0$.  Acta Mathematica \textbf{86} (1951), pp. 203--362 
    \bibitem{Shim} Shimura, G.: \textit{Introduction to the Arithmetic Theory of Automorphic Functions}. Kan\=o Memorial Lec-tures \textbf{1}, Princeton University Press (1971)
    \bibitem{Silv1} Silverman, J. H.: \textit{The Arithmetic of Elliptic Curves}, 2nd ed. Springer-Verlag GTM \textbf{106} (2009)
    \bibitem{Silv2} Silverman, J. H.: \textit{Advanced Topics in the Arithmetic of Elliptic Curves}. Springer-Verlag GTM \textbf{151} (19 94)
    \bibitem{SSY} Shu, J., Song, Y., Yin, H.: \textit{Cube sums of form $3p$ and $3p^2$}. Math. Z. \textbf{299} (2021), pp. 2297--2325
    \bibitem{SY2} Shu, J., Yin, H.: \textit{Cube sums of form $3p$ and $3p^2$ II}. Math. Ann. \textbf{385} (2022), pp. 1037--1060
    \bibitem{SS} Stein, E. M., Shakarchi, R.: \textit{Complex Analysis}.  Princeton Lectures in Analysis \textbf{II}, Princeton University Press (2003)
    \bibitem{Tian} Tian, Y.: \textit{Congruent Numbers and Heegner Points}.  Cambridge J. of Math. \textbf{2} No. 1 (2014), pp. 117--161
\end{thebibliography}
\end{document}